 \newtheorem{thm}{Theorem}[section]
 \newtheorem{lem}[thm]{Lemma}
 \newtheorem{prop}[thm]{Proposition}
 \newtheorem{defn}[thm]{Definition}
 \newtheorem{ex}[thm]{Example}
 \newtheorem{rem}[thm]{Remark}
 \newtheorem{conj}[thm]{Conjecture}
 \def\k{\mathbbm{k}}
 \newcommand{\Hom}{\mathrm{Hom}}
\title{Cohomology algebras of a family of cochain DG skew polynomial algebras}
\author{X.-F. Mao}
\address{Department of Mathematics, Shanghai University, Shanghai 200444, China}
\email{xuefengmao@shu.edu.cn}
\author{H. Wang}
\address{Department of Mathematics, Shanghai University, Shanghai 200444, China}
\email{happywang97@shu.edu.cn}
\author{G.Ren}
\address{School of Economics, Shanghai University, Shanghai 200444, China}
\email{rengui@shu.edu.cn}
\date{}
\subjclass[2010]{Primary 16E45, 16E65, 16W20,16W50}
\keywords{cochain DG algebra, cohomology algebra, DG skew polynomial algebra, AS-Gorenstein algebra}
\begin{document}

\begin{abstract}
Let $\mathcal{A}$ be a connected cochain DG algebra such that its underlying graded algebra $\mathcal{A}^{\#}$ is the graded skew polynomial algebra
$$k\langle x_1,x_2, x_3\rangle/\left(\begin{array}{ccc}
x_1x_2+x_2x_1\\
x_2x_3+x_3x_2\\
x_3x_1+x_1x_3\\
                                 \end {array}\right), |x_1|=|x_2|=|x_3|=1.$$
From \cite{MWZ} or \cite{MWYZ}, one sees that the differential $\partial_{\mathcal{A}}$ is determined by
\begin{align*}
\left(
                         \begin{array}{c}
                           \partial_{\mathcal{A}}(x_1)\\
                           \partial_{\mathcal{A}}(x_2)\\
                           \partial_{\mathcal{A}}(x_3)
                         \end{array}
                       \right)=M\left(
                         \begin{array}{c}
                           x_1^2\\
                           x_2^2\\
                           x_3^2
                         \end{array}
                       \right),
\end{align*}
for some  $M\in M_3(k)$.
For the case $1\le r(M)\le 3$, we compute $H(\mathcal{A})$ case by case.
The computational results in this paper give substantial support for \cite{MWZ}, where
the various homological properties of such DG algebras are systematically studied.
We find some examples, which indicate that the cohomology graded algebras of such kind of DG algebras may be not left (right) Gorenstein.

\end{abstract}

\maketitle

\section{introduction}
In the literature, Koszul, homologically smooth, Gorenstein and Calabi-Yau properties of cochain DG algebras have been frequently studied.
 In general, these homological properties are difficult to detect. For a non-trivial DG algebra $\mathcal{A}$, the trivial DG algebra $H(\mathcal{A})$ is much simpler to study since it has
zero differential.
There have been some attempts to judge the various homological properties of
$\mathcal{A}$ from $H(\mathcal{A})$. It is shown in \cite{MH} that a connected cochain DG algebra $\mathcal{A}$ is a Kozul Calabi-Yau
DG algebra if $H(\mathcal{A})$ belongs to one of the following cases:
\begin{align*}
& (a) H(A)\cong \k;  \quad \quad (b) H(A)= \k[\lceil z\rceil], z\in \mathrm{ker}(\partial_A^1); \\
& (c) H(A)= \frac{\k\langle \lceil z_1\rceil, \lceil z_2\rceil\rangle}{(\lceil z_1\rceil\lceil z_2\rceil +\lceil z_2\rceil \lceil z_1\rceil)}, z_1,z_2\in \mathrm{ker}(\partial_A^1).
\end{align*}
A more general result is proved in \cite{MYY} that $\mathcal{A}$ is Calabi-Yau if the trivial DG algebra $(H(\mathcal{A}),0)$ is Calabi-Yau.
In particular, $\mathcal{A}$ is a Calabi-Yau DG algebra if $$H(\mathcal{A})=k\langle \lceil x\rceil,\lceil y\rceil,\lceil z\rceil\rangle/\left(\begin{array}{ccc}
                                 a\lceil y\rceil \lceil z\rceil +b\lceil z\rceil \lceil y\rceil + c\lceil x\rceil^{2}\\
                               a\lceil z\rceil \lceil x\rceil +b\lceil x\rceil \lceil z\rceil + c\lceil y\rceil^{2} \\
                                 a\lceil x\rceil \lceil y\rceil +b\lceil y\rceil \lceil x\rceil+ c\lceil z\rceil^{2}
                                 \end {array}\right),$$
where $(a,b,c)\in \Bbb{P}^2_k-\mathfrak{D}$ and $x,y,z\in \mathrm{ker}(\partial_{\mathcal{A}}^1)$.
By  \cite[Proposition $6.2$]{MXYA}, $\mathcal{A}$ is not a Gorenstein DG algebra but a Koszul and homologically smooth DG algebra if $H(\mathcal{A})=\k\langle \lceil y_1\rceil, \cdots, \lceil y_n\rceil \rangle,$ for some degree $1$ cocycle elements $y_1,\cdots, y_n$  in $\mathcal{A}$. And \cite[Proposition $6.5$]{MHLX} indicates that $\mathcal{A}$ is Calabi-Yau
if $H(\mathcal{A})=\k[\lceil z_1\rceil, \lceil z_2\rceil]$, where $z_1\in \mathrm{ker}(\partial_{\mathcal{A}}^1)$ and $z_2\in \mathrm{ker}(\partial_{\mathcal{A}}^2)$.
 In \cite{MGYC}, it is proved that $\mathcal{A}$ is a Koszul homologically smooth DG algebra if $H(\mathcal{A})=\k[\lceil y_1\rceil, \cdots, \lceil y_m\rceil ],$ for some central, cocycle and degree $1$ elements $y_1,\cdots, y_m$  in $\mathcal{A}$. Moreover, $\mathcal{A}$ is $0$-Calabi-Yau if and only if $m$ is an odd integer. It is proved in \cite[Proposition 4.3]{MWZ} that $\mathcal{A}$ is a Koszul and Calabi-Yau DG algebra
 if $$H(\mathcal{A})=k\langle \lceil y_1\rceil,\lceil y_2\rceil \rangle/(t_1\lceil y_1\rceil^2+t_2\lceil y_2\rceil^2+t_3(\lceil y_1\rceil \lceil y_2\rceil +\lceil y_2\rceil \lceil y_1\rceil))$$ with $y_1,y_2\in Z^1(\mathcal{A})$ and
 $(t_1,t_2,t_3)\in \Bbb{P}_k^2-\{(t_1,t_2,t_3)|t_1t_2-t_3^2\neq 0\}$.
 These results indicate that it is worthwhile to compute the cohomology algebra of a given DG algebra if one wants to study its homological properties.

Recently, the constructions and studies on some specific family of connected cochain DG algebras have attracted much attentions. In \cite{MHLX}, \cite{MGYC} and \cite{MXYA}, DG down-up algebras, DG polynomial algebras and DG free algebras are introduced and systematically studied, respectively. It is exciting to discover that non-trivial DG down-up algebras, non-trivial DG polynomial algebras and DG free algebras with $2$ degree $1$ variables are Calabi-Yau DG algebras. It seems to be a good way to construct some interesting homologically smooth DG algebras on AS-regular algebras.

Let $\mathfrak{D}$ the subset of the projective plane $\Bbb{P}_k^2$ consisting of the $12$ points:
$$\mathfrak{D}:=\{(1,0,0), (0,1,0),(0,0,1)\}\sqcup\{(a,b,c)|a^3=b^3=c^3\}.$$
Recall that the points $(a,b,c)\in \Bbb{P}_k^2-\mathfrak{D}$ parametrize the $3$-dimensional Sklyanin algebras,
$$S_{a,b,c}=\frac{k\langle x_1,x_2,x_3\rangle}{(f_1,f_2,f_3)},$$
where \begin{align*} f_1&=ax_2x_3+bx_3x_2+cx_1^2\\
                     f_2&=ax_3x_1+bx_1x_3+cx_2^2\\
                     f_3&=ax_1x_2+bx_2x_1+cx_3^2.
                     \end{align*}
The $3$-dimensional Sklyanin algebras form the most important class of Artin-Schelter regular algebras of global dimension $3$.
We say that a cochain DG algebra $\mathcal{A}$ is a $3$-dimensional Sklyanin DG algebra if its underlying graded algebra $\mathcal{A}^{\#}$ is a $3$-dimensional Sklyanin algebra $S_{a,b,c}$, for some $(a,b,c)\in \Bbb{P}_k^2-\mathfrak{D}$.
In \cite{MWYZ}, all possible differential structures on $3$-dimensional DG Sklyanin algebras are classified.
By \cite[Theorem A]{MWYZ}, $\partial_{\mathcal{A}}=0$ when $|a|\neq |b|$ or $c\neq 0$. Note that $\partial_{\mathcal{A}}\neq 0$ only if either $a=b, c=0$ or $a=-b,c=0$. When $a=-b,c=0$, the $3$-dimensional DG Sklyanin algebras $\mathcal{A}$ is just a DG Polynomial algebra, which is systematically studied in \cite{MGYC}. For the case $a=b,c=0$,  the differential $\partial_{\mathcal{A}}$ is defined by
\begin{align*}
\left(
                         \begin{array}{c}
                           \partial_{\mathcal{A}}(x_1)\\
                           \partial_{\mathcal{A}}(x_2)\\
                           \partial_{\mathcal{A}}(x_3)
                         \end{array}
                       \right)=M\left(
                         \begin{array}{c}
                           x_1^2\\
                           x_2^2\\
                           x_3^2
                         \end{array}
                       \right), \text{for some}\, M\in M_3(k).
\end{align*}
In this case, the $3$-dimensional DG Sklyanin algebra is just  $\mathcal{A}_{\mathcal{O}_{-1}(k^3)}(M)$ in \cite{MWZ}.
Note that such $3$-dimensional DG Sklyanin algebras are actually a family of cochain DG skew polynomial algebras.
The motivation of this paper is to compute $H(\mathcal{A})$ when $1\le r(M)\le 3$.

For any $M\in M_2(k)$, one sees that $H[\mathcal{A}_{\mathcal{O}_{-1}(k^2)}(M)]$ is always AS-Gorenstein
by \cite{Mao}.  And each DG algebra $\mathcal{A}_{\mathcal{O}_{-1}(k^2)}(M)$ is a Koszul Calabi-Yau DG algebra by
\cite[Theorem C]{MH}. It is natural for us to put forward the following conjecture.
\begin{conj}\label{biproduct}
For any  $M\in M_3(k)$, $H(\mathcal{A}_{\mathcal{O}_{-1}(k^3)}(M))$ is a left (right) Gorenstein graded algebra.
\end{conj}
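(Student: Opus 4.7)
The plan is to prove the conjecture by a case-by-case reduction based on the rank of $M$, leveraging the explicit computations of $H(\mathcal{A}_{\mathcal{O}_{-1}(k^3)}(M))$ carried out in the body of the paper, and then verifying the Gorenstein property of each resulting graded algebra by analyzing the minimal graded free resolution of its trivial module. The first step is to exploit the freedom of linear change of variables $y_i = \sum_j P_{ij} x_j$ with $P \in GL_3(k)$: since $x_j x_l + x_l x_j = 0$ for $j \neq l$, one has $y_i^2 = \sum_j P_{ij}^2 x_j^2$, so the skew polynomial structure is preserved and $M$ transforms to $P M (P^{(2)})^{-1}$ whenever the Hadamard square $P^{(2)} = (P_{ij}^2)$ is invertible. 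This equivalence reduces $M$ to a short list of canonical representatives within each rank stratum $r(M) \in \{0,1,2,3\}$. On the stratum $r(M) = 0$ one has $\partial_{\mathcal{A}} = 0$ and hence $H(\mathcal{A}) = \mathcal{O}_{-1}(k^3)$, a $3$-dimensional AS-regular algebra, which settles the base case on both sides.

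For each canonical representative with $r(M) \geq 1$, the body of the paper produces an explicit finite presentation of $H(\mathcal{A})$. The proposed Gorenstein check then constructs the minimal graded free resolution of the trivial module $\k$ over $H(\mathcal{A})$ step by step, reading off new generators in each homological degree from the graded socle of the preceding syzygy kernel. The left AS-Gorenstein property amounts to verifying that this resolution has finite length and that $\mathrm{Ext}^{*}_{H(\mathcal{A})}(\k, H(\mathcal{A}))$ is concentrated in a single homological degree and equals there a shift of $\k$; the right-handed check is symmetric, and is often tied to the left-handed one by a Nakayama-type automorphism of $H(\mathcal{A})$ inherited from the $-1$-twist of $\mathcal{O}_{-1}(k^3)$.

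A more uniform route would be to use the change-of-rings / Eilenberg--Moore spectral sequence
\[
E_2^{p,q} = \mathrm{Ext}^{p,q}_{\mathcal{A}^{\#}}(\k, \mathcal{A}^{\#}) \;\Longrightarrow\; \mathrm{Ext}^{p+q}_{H(\mathcal{A})}(\k, H(\mathcal{A})),
\]
together with the fact that $\mathcal{A}^{\#} = \mathcal{O}_{-1}(k^3)$ is AS-regular of global dimension $3$, so the $E_2$-page is thin, and then track the higher differentials induced by $M$. If the sequence could be shown to always degenerate to a single one-dimensional piece concentrated in a fixed total degree, the conjecture would follow in one stroke.

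The main obstacle is precisely that this spectral sequence does \emph{not} degenerate in a rank-independent manner: its higher differentials depend sensitively on the entries of $M$, and the abstract already signals that certain matrices $M$ produce cohomology with extra socle in low degrees, violating Gorenstein purity. Consequently, the clean proof sketched above can succeed only after one first pins down the precise closed locus in $M_3(k)$ on which the Gorenstein property fails, and then proves convergence uniformly on its Zariski complement. A plausible honest outcome is a refinement of the conjecture: that $H(\mathcal{A}_{\mathcal{O}_{-1}(k^3)}(M))$ is left (right) Gorenstein precisely when $M$ avoids a locus cut out by explicit polynomial invariants, and the main mathematical content then lies in computing those invariants and exhibiting a Nakayama automorphism witnessing the two-sided version.
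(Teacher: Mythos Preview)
The conjecture is \emph{false}, and the paper does not prove it but rather disproves it (Example~\ref{countex} and Theorem~\ref{nonasgoren}). Your proposal, framed as a plan to establish the conjecture, therefore cannot succeed as stated. You do sense this in your final paragraph, where you retreat to a refinement characterizing the Gorenstein locus inside $M_3(k)$; that refinement is exactly what the paper actually proves (Theorem~A), so the honest version of your proposal is not a proof of the conjecture but a proof of its negation together with a precise description of the failure locus.

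Two technical points also need correction. First, your change-of-variables step is wrong: a general $P\in\mathrm{GL}_3(k)$ does \emph{not} preserve the relations $x_ix_j+x_jx_i=0$. If $y_i=\sum_jP_{ij}x_j$ then $y_iy_l+y_ly_i=2\sum_jP_{ij}P_{lj}x_j^2$, which vanishes for all $i\neq l$ only when $P$ is a monomial matrix (each column has a single nonzero entry). This is why the paper works with the group $\mathrm{QPL}_3(k)$ rather than $\mathrm{GL}_3(k)$, and why the canonical-form reduction is more delicate than you suggest. Second, the spectral sequence you write down,
\[
\mathrm{Ext}^{p,q}_{\mathcal{A}^{\#}}(\k,\mathcal{A}^{\#})\Longrightarrow \mathrm{Ext}^{p+q}_{H(\mathcal{A})}(\k,H(\mathcal{A})),
\]
does not exist in this form: there is no Eilenberg--Moore or change-of-rings mechanism relating $\mathrm{Ext}$ over the underlying graded algebra to $\mathrm{Ext}$ over the cohomology algebra of a DG structure on it. The paper's actual route is the one you describe first: compute $H(\mathcal{A})$ explicitly on each rank stratum (Propositions~\ref{rank3}, \ref{rmtwoimc}, \ref{rankone}), and then test Gorensteinness of the resulting finitely presented algebras directly via minimal free resolutions (Lemmas~\ref{nongorone}, \ref{nongorsec}). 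That direct computation is what produces the counterexamples.
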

Finally, we give a concrete counterexample to disprove Conjecture \ref{biproduct}(see Example \ref{countex}).
More generally, we have the following theorem (see Theorem \ref{nonasgoren}). \\
\begin{bfseries}
Theorem \ A.
\end{bfseries}
Let $\mathcal{A}$ be a connected cochain DG algebra such that
$$\mathcal{A}^{\#}=k\langle x_1,x_2, x_3\rangle/\left(\begin{array}{ccc}
x_1x_2+x_2x_1\\
x_2x_3+x_3x_2\\
x_3x_1+x_1x_3\\
                                 \end {array}\right), |x_1|=|x_2|=|x_3|=1,$$
and $\partial_A$ is determined by
\begin{align*}
\left(
                         \begin{array}{c}
                           \partial_{\mathcal{A}}(x_1)\\
                           \partial_{\mathcal{A}}(x_2)\\
                           \partial_{\mathcal{A}}(x_3)
                         \end{array}
                       \right)=N\left(
                         \begin{array}{c}
                           x_1^2\\
                           x_2^2\\
                           x_3^2
                         \end{array}
                       \right).
\end{align*}
Then the graded algebra $H(\mathcal{A})$ is not left (right) Gorenstein if and only if
there exists some  $C=(c_{ij})_{3\times 3}\in \mathrm{QPL}_3(k)$ satisfying $N=C^{-1}M(c_{ij}^2)_{3\times 3}$,
where
$$M=\left(
                                 \begin{array}{ccc}
                                   1 & 1 & 0 \\
                                   1 & 1 & 0 \\
                                   1 & 1 & 0 \\
                                 \end{array}
                               \right)
\,\,\text{or}\,\,M=\left(
                                 \begin{array}{ccc}
                                   m_{11} & m_{12} & m_{13} \\
                                   l_1m_{11} & l_1m_{12} & l_1m_{13} \\
                                   l_2m_{11} & l_2m_{12} & l_2m_{13} \\
                                 \end{array}
                               \right)$$ with $m_{12}l_1^2+m_{13}l_2^2\neq m_{11}, l_1l_2\neq 0$ and $4m_{12}m_{13}l_1^2l_2^2= (m_{12}l_1^2+m_{13}l_2^2-m_{11})^2$.

----------------------------------------------------------------------
\section{preliminaries}
\subsection{Notations and conventions}
Throughout this paper, $k$ is an algebraically closed field of characteristic $0$.
For any $k$-vector space $V$, we write $V'=\Hom_{k}(V,k)$. Let $\{e_i|i\in I\}$ be a basis of a finite dimensional $k$-vector space $V$.  We denote the dual basis of $V$ by $\{e_i^*|i\in I\}$, i.e., $\{e_i^*|i\in I\}$ is a basis of $V'$ such that $e_i^*(e_j)=\delta_{i,j}$. For any graded vector space $W$ and $j\in\Bbb{Z}$, the $j$-th suspension $\Sigma^j W$ of $W$ is a graded vector space defined by $(\Sigma^j W)^i=W^{i+j}$.

 A cochain DG algebra is
a graded
$k$-algebra $\mathcal{A}$ together with a differential $\partial_{\mathcal{A}}: \mathcal{A}\to \mathcal{A}$  of
degree $1$ such that
\begin{align*}
\partial_{\mathcal{A}}(ab) = (\partial_{\mathcal{A}} a)b + (-1)^{|a|}a(\partial_{\mathcal{A}} b)
\end{align*}
for all graded elements $a, b\in \mathcal{A}$.  We write $\mathcal{A}\!^{op}$ for its opposite DG
algebra, whose multiplication is defined as
 $a \cdot b = (-1)^{|a|\cdot|b|}ba$ for all
graded elements $a$ and $b$ in $\mathcal{A}$.

Let $\mathcal{A}$ be
a cochain DG algebra.  We denote by $\mathcal{A}^i$ its $i$-th homogeneous component.  The differential $\partial_{\mathcal{A}}$ is a sequence of linear maps $\partial_{\mathcal{A}}^i: \mathcal{A}^i\to \mathcal{A}^{i+1}$ such that $\partial_{\mathcal{A}}^{i+1}\circ \partial_{\mathcal{A}}^i=0$, for all $i\in \Bbb{Z}$.  If $\partial_{\mathcal{A}}\neq 0$, $\mathcal{A}$ is called
non-trivial. The cohomology graded algebra of $\mathcal{A}$ is the graded algebra $$H(\mathcal{A})=\bigoplus_{i\in \Bbb{Z}}\frac{\mathrm{ker}(\partial_{\mathcal{A}}^i)}{\mathrm{im}(\partial_{\mathcal{A}}^{i-1})}.$$
 Let $z\in \mathrm{ker}(\partial_{\mathcal{A}}^i)$ be a cocycle element of degree $i$. We write $\lceil z \rceil$ for the cohomology class in $H(\mathcal{A})$ represented by $z$. If $\mathcal{A}^0=k$ and $\mathcal{A}^i=0, \forall i<0$, then we say that $\mathcal{A}$ is connected.  One sees that $H(\mathcal{A})$ is a connected graded algebra if $\mathcal{A}$ is a connected cochain DG algebra.
Let $\mathcal{A}$ be a connected cochain DG $k$-algebra.  We write $\frak{m}$ as the maximal DG ideal $\mathcal{A}^{>0}$ of $\mathcal{A}$.
Via the canonical surjection $\varepsilon: \mathcal{A}\to k$, $k$ is both a DG
$\mathcal{A}$-module and a DG $\mathcal{A}\!^{op}$-module. It is easy to check that the enveloping DG algebra $\mathcal{A}^e = \mathcal{A}\otimes \mathcal{A}\!^{op}$ of $\mathcal{A}$
is also a connected cochain DG algebra with $H(\mathcal{A}^e)\cong H(\mathcal{A})^e$, and $$\frak{m}_{\mathcal{A}^e}=\frak{m}_{\mathcal{A}}\otimes \mathcal{A}^{op} + \mathcal{A}\otimes
\frak{m}_{\mathcal{A}^{op}}.$$

 The derived category of left DG modules over $\mathcal{A}$ (DG $\mathcal{A}$-modules for short) is denoted by $\mathrm{D}(\mathcal{A})$.  A DG $\mathcal{A}$-module  $M$ is compact if the functor $\Hom_{\mathrm{D}(A)}(M,-)$ preserves
all coproducts in $\mathrm{D}(\mathcal{A})$.
 By \cite[Proposition 3.3]{MW1},
a DG $\mathcal{A}$-module  is compact if and only if it admits a minimal semi-free resolution with a finite semi-basis. The full subcategory of $\mathrm{D}(\mathcal{A})$ consisting of compact DG $\mathcal{A}$-modules is denoted by $\mathrm{D^c}(\mathcal{A})$.

In the rest of this subsection, we review some important homological properties for DG algebras.
\begin{defn}\label{basicdef}
{\rm Let $\mathcal{A}$ be a connected cochain DG algebra.
\begin{enumerate}
\item  If $\dim_{k}H(R\Hom_{\mathcal{A}}(\k,\mathcal{A}))=1$, then $\mathcal{A}$ is called Gorenstein (cf. \cite{FHT1});
\item  If ${}_{\mathcal{A}}k$, or equivalently ${}_{\mathcal{A}^e}\mathcal{A}$, has a minimal semi-free resolution with a semi-basis concentrated in degree $0$, then $\mathcal{A}$ is called Koszul (cf. \cite{HW});
\item If ${}_{\mathcal{A}}k$, or equivalently the DG $\mathcal{A}^e$-module $\mathcal{A}$ is compact, then $\mathcal{A}$ is called homologically smooth (cf. \cite[Corollary 2.7]{MW3});
\item If $\mathcal{A}$ is homologically smooth and $$R\Hom_{\mathcal{A}^e}(\mathcal{A}, \mathcal{A}^e)\cong
\Sigma^{-n}\mathcal{A}$$ in  the derived category $\mathrm{D}((\mathcal{A}^e)^{op})$ of right DG $\mathcal{A}^e$-modules, then $\mathcal{A}$ is called an $n$-Calabi-Yau DG algebra  (cf. \cite{Gin,VdB}).
\end{enumerate}}
 \end{defn}

\subsection{AS-Gorenstein (AS-regular) graded algebras}
In this subsection, we let $A$ be a connected graded algebra. We have the following definitions on AS-Gorenstein graded algebras and AS-regular graded algebras.
\begin{defn}
  We say that $A$ is
 left (resp. right)
Gorenstein if $\dim_k\mathrm{Ext}_A^*(k,A) = 1$ (resp.
$\dim_k\mathrm{Ext}_{A\!^{op}}^*(k,A) = 1$), where
$\mathrm{Ext}_A^*(k,A) = \oplus_{i \in
\mathbb{Z}}\mathrm{Ext}_A^i(k,A).$ For a
 left Gorenstein graded algebra $A$,  there is some integer $l$ such that
\begin{equation}\label{gorenstein}
\mathrm{Ext}_A^i(k,A)=
\begin{cases}0, &  i \neq \mathrm{depth}_AA,
  \\
 k(l), & i=\mathrm{depth}_AA.
\end{cases}
\end{equation}
A left (resp. right) Gorenstein graded algebra $A$ is called left
(resp. right) AS-Gorenstein (AS stands for Artin-Schelter) if
$\mathrm{id}_AA<\infty$ (resp. $\mathrm{id}_{A^{op}}A<\infty$). If
further, $\mathrm{gl.dim}A<\infty$, then we say $A$ is left (resp.
right) AS-regular.

\end{defn}

\begin{lem}\label{extaslem}
Let $A$ be a Noetherian and AS-Gorenstein graded algebra. Then the graded algebra $B=A[x]$ with $|x|=2$ is also a Noetherian and AS-Gorenstein graded algebra.
\end{lem}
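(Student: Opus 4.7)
The plan is to verify Noetherianness and the AS-Gorenstein condition for $B = A[x]$ separately. Noetherianness follows immediately from the graded Hilbert basis theorem, since $x$ is a central positive-degree commuting variable adjoined to a Noetherian ring.

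For the Gorenstein condition I would exploit that $x \in B^2$ is a central non-zero-divisor with $B/xB = A$. This yields the short exact sequence of graded left $B$-modules
\begin{equation*}
0 \longrightarrow B(-2) \xrightarrow{\,\cdot x\,} B \longrightarrow A \longrightarrow 0,
\end{equation*}
which is a length-$1$ free resolution of $A$ over $B$. Applying $\Hom_B(-,B)$ and using regularity of $x$ yields $\mathrm{Ext}_B^q(A,B) = 0$ for $q \neq 1$ and $\mathrm{Ext}_B^1(A,B) \cong A(2)$. Feeding this into the change-of-rings spectral sequence
\begin{equation*}
E_2^{p,q} = \mathrm{Ext}_A^p\bigl(k,\, \mathrm{Ext}_B^q(A,B)\bigr) \Longrightarrow \mathrm{Ext}_B^{p+q}(k,B)
\end{equation*}
forces collapse and produces $\mathrm{Ext}_B^{p+1}(k,B) \cong \mathrm{Ext}_A^p(k,A)(2)$. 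By \eqref{gorenstein} the right-hand side is one-dimensional and concentrated at $p = \mathrm{depth}_A A$, so $\dim_k\mathrm{Ext}_B^*(k,B) = 1$, with the non-vanishing $\mathrm{Ext}$ group at position $\mathrm{depth}_A A + 1$ and internal degree $l+2$. The symmetric argument applied to $A^{op}$ handles right Gorensteinness.

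To finish, I would bound $\mathrm{id}_B B$: a finite minimal injective resolution of $A$ over $A$, available because $A$ is AS-Gorenstein, can be lifted via $\Hom_A(B,-)$ --- using that $B$ is free as a right $A$-module --- to produce an injective resolution over $B$; combined with the short exact sequence above, this shows $\mathrm{id}_B B \le \mathrm{id}_A A + 1 < \infty$, completing the AS-Gorenstein verification. The main obstacle is bookkeeping the internal grading shifts $\Hom_B(B(-n),B) \cong B(n)$ correctly through the dualization and the spectral sequence, and justifying the collapse of the latter --- both of which are mild here because the projective dimension of $A$ over $B$ is exactly $1$, so the relevant double complex is supported in a single column.
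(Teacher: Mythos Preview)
Your proof of the Gorenstein condition is correct and follows a genuinely different route from the paper. The paper exploits the tensor decomposition $B = A \otimes_k k[x]$: it takes minimal free resolutions $P$ of ${}_Ak$ and $Q$ of ${}_{k[x]}k$, observes that $P\otimes Q$ resolves ${}_Bk$, and then applies a K\"unneth-type identification to factor $\mathrm{Ext}_B^*(k,B)$ as $\mathrm{Ext}_A^*(k,A)\otimes \mathrm{Ext}_{k[x]}^*(k,k[x])$. You instead treat $x$ as a central regular element and run the Rees-lemma machinery via the Koszul resolution $0\to B(-2)\to B\to A\to 0$ and the change-of-rings spectral sequence. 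Your approach is more portable --- it works verbatim for any central non-zero-divisor of positive degree, not just an externally adjoined polynomial variable --- while the paper's argument is more elementary in that it avoids spectral sequences altogether.

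One caveat on your injective-dimension sketch: applying $\Hom_A(B,-)$ to a finite injective resolution of ${}_AA$ does produce $B$-injectives, but it resolves $\Hom_A(B,A)$, which in the graded setting is a power-series-type completion rather than $B$ itself; so this does not directly bound $\mathrm{id}_B B$. The inequality $\mathrm{id}_B B \le \mathrm{id}_A A + 1$ you want is nonetheless a standard Bass-type transfer result along a regular central element (or follows from the tensor-product description the paper uses). Note that the paper's own proof is equally casual here --- it verifies $\dim_k\mathrm{Ext}_B^*(k,B)=1$ and declares $B$ AS-Gorenstein without addressing $\mathrm{id}_B B$ explicitly --- so you are in no worse shape than the original on this point.
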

\begin{proof}
By the well-known `Hilbert basis Theorem', one sees that $B$ is Noetherian. We have $B=A\otimes k[x]$.
Let $P$ and $Q$ be the  finitely generated
minimal free resolutions of ${}_{\mathcal{A}}k$ and ${}_{k[x]}k$ respectively. Then
$P\otimes Q$ is a finitely generated minimal free resolution of ${}_{\mathcal{B}}k$. We have
\begin{align*}
H(\Hom_{B}(P\otimes Q, B))&=H(\Hom_{A\otimes k[x]}(P\otimes Q, A\otimes k[x])) \\
& \cong H(\Hom_{A}(P, \Hom_{k[x]}(Q,A\otimes k[x])))\\
& \cong H(\Hom_{A}(P, A\otimes \Hom_{k[x]}(Q,k[x])) \\
& \cong H(\Hom_{A}(P,A)\otimes \Hom_{k[x]}(Q,k[x]))\\
& \cong H(\Hom_{A}(P,A))\otimes H(\Hom_{k[x]}(Q,k[x])).
\end{align*}
Since $A$ and $k[x]$ are both AS-Gorenstein, we have $$\dim_k\mathrm{Ext}_B^*(k,B)=\dim_kH(\Hom_{B}(P\otimes Q, B))=1.$$
Thus $B=A[x]$ is left AS-Gorenstein. We can similarly show that $B=A[x]$ is right AS-Gorenstein.
\end{proof}

\begin{lem}\label{nongorone}
Let $A$ be a connected graded algebra such that $$A=\frac{k\langle x,y\rangle}{(ax^2+\sqrt{ab}(xy+yx)+by^2)}, ab>0, |x|=|y|=1.$$
Then $A$ is not left (right) Gorenstein.
\end{lem}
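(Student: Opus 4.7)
The plan is to simplify the algebra by a change of variables and then compute $\mathrm{Ext}_A^*(k,A)$ directly from an explicit minimal free resolution of ${}_Ak$.

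First, I would complete the square: $ax^2+\sqrt{ab}(xy+yx)+by^2 = (\sqrt{a}x+\sqrt{b}y)^2$. Since $ab>0$ forces $\sqrt{a},\sqrt{b}\ne 0$, the elements $z:=\sqrt{a}x+\sqrt{b}y$ and $w:=y$ form a $k$-basis of $A^1$, and in these new generators the only relation is $z^2=0$; hence $A\cong k\langle z,w\rangle/(z^2)$. The anti-automorphism of $k\langle x,y\rangle$ fixing $x,y$ preserves the relation, so $A\cong A^{op}$ as graded algebras, and it suffices to show that $A$ is not left Gorenstein.

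Next, I would work with the monomial $k$-basis of $A$ consisting of words in $\{z,w\}$ avoiding the subword $zz$. A short combinatorial analysis yields three facts: (i) $az=0$ in $A$ iff $a\in Az$; (ii) $wa=0$ in $A$ iff $a=0$; (iii) the kernel of $A^{\oplus 2}\to A,\ (a_1,a_2)\mapsto a_1z+a_2w$ equals $(Az)\oplus 0$. These immediately give the minimal free resolution of ${}_Ak$
\begin{equation*}
\cdots\to A(-3)\xrightarrow{\,\cdot z\,}A(-2)\xrightarrow{(\cdot z,\,0)}A(-1)^{\oplus 2}\xrightarrow{(z,\,w)}A\to k\to 0.
\end{equation*}

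Finally, I would apply $\Hom_A(-,A)$. Identifying $\Hom_A(A(-n),A)\cong A$ and noting that the dual of right multiplication on a cyclic free left module is left multiplication, the complex $\Hom_A(P_\bullet,A)$ becomes
\begin{equation*}
A\xrightarrow{a\mapsto (za,\,wa)} A\oplus A\xrightarrow{(a_1,a_2)\mapsto za_1} A\xrightarrow{\,z\cdot\,} A\xrightarrow{\,z\cdot\,}\cdots.
\end{equation*}
Using the word analysis again, $\mathrm{Ext}^0=0$ because $wa=0\Rightarrow a=0$; $\mathrm{Ext}^n=0$ for $n\ge 2$ because $\ker(z\cdot)=zA=\mathrm{im}(z\cdot)$; and a one-line diagram chase identifies $\mathrm{Ext}^1\cong A/(wzA)$. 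Since $wzA\subseteq A^{\ge 2}$, the classes of $1,z,w$ are linearly independent in $A/(wzA)$, so $\dim_k\mathrm{Ext}_A^*(k,A)\ge 3>1$, and $A$ is not left Gorenstein.

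The main obstacle is the bookkeeping with the avoiding-$zz$ basis: everything ultimately reduces to straightforward word combinatorics, but one must distinguish left from right multiplication consistently, especially after dualizing, where the resolution's right-multiplication differentials turn into left-multiplication differentials on the dual side.
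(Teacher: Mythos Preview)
Your proof is correct and follows essentially the same route as the paper: construct the minimal free resolution of ${}_Ak$ (which is eventually periodic of period one), dualize, and compute $\mathrm{Ext}^*_A(k,A)$ directly to see it is not one-dimensional. Your preliminary change of variables to $k\langle z,w\rangle/(z^2)$ and the resulting monomial combinatorics make the argument cleaner than the paper's computation in the original generators $x,y$, but the resolution you obtain is the same one (since $ax+\sqrt{ab}\,y=\sqrt{a}\,z$), and the structure of the $\mathrm{Ext}$ computation is identical.
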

\begin{proof}
The trivial module ${}_Ak$ admits a finitely generated  minimal free resolution
$$\cdots \xrightarrow{d_{n+1}} F_{n} \xrightarrow{d_n} F_{n-1}\xrightarrow{d_{n-1}}\cdots \xrightarrow{d_3}  F_2\xrightarrow{d_2}F_1=Ae_x\oplus Ae_y
\xrightarrow{d_1} A\xrightarrow{\varepsilon} {}_Ak\rightarrow 0,$$
where \begin{align*}
&F_{n-1}=Ae_{n-1}, d_n(e_n)=(ax+\sqrt{ab}y)e_{n-1}, n\ge 3;\\
&d_2(e_2)=(ax+\sqrt{ab}y)e_x+(\sqrt{ab}x+by)e_y, d_1(e_x)=x, d_1(e_y)=y.\\
\end{align*}
Acting the functor $\Hom_{A}(-,A)$ on the deleted complex of the minimal free resolution above, we obtain the complex
$$0\to 1^*A \xrightarrow{d_1^*}e_x^*A\oplus e_y^*A  \xrightarrow{d_2^*} e_2^*A\xrightarrow{d_3^*}e_3^*A \xrightarrow{d_4^*}\cdots \xrightarrow{d_n^*}e_n^*A \xrightarrow{d_{n+1}^*}\cdots, $$
where
\begin{align*}
&d_1^*(1^*)=e_x^*x+e_y^*y; d_2^*(e_x^*)=e_r^*(ax+\sqrt{ab}y), d_2^*(e_y^*)=e_r^*(\sqrt{ab}x+by);\\
&d_{i+1}^*(e_i^*)=e_{i+1}^*(ax+\sqrt{ab}y), i\ge 2.
\end{align*}
 We have
 \begin{align*}
 &\mathrm{Ext}_A^0(k,A)=\mathrm{ker}(d_1^*)=0;\\
 &\mathrm{Ext}_A^1(k,A)=\frac{\mathrm{ker}(d_2^*)}{\mathrm{im}(d_1^*)}=\frac{(\sqrt{\frac{b}{a}}e_x^*-e_y^*)A\oplus (e_x^*x+e_y^*y)A}{(e_x^*x+e_y^*y)A}\cong(\sqrt{\frac{b}{a}}e_x^*-e_y^*)A;\\
 &\mathrm{Ext}_A^i(k,A)=\frac{\mathrm{ker}(d_{i+1}^*)}{\mathrm{im}(d_i^*)}=\frac{e_i^*(ax+\sqrt{ab}y)A}{e_i^*(ax+\sqrt{ab}y)A}=0, i\ge 2.
 \end{align*}
Obviously, $\dim_k \mathrm{Ext}_A^*(k,A)\neq 1$ and hence $A$ is not left Gorenstein. Similarly, we can show that $A$ is not right Gorenstein.

\end{proof}

\begin{lem}\label{nongorsec}
Let $A$ be a connected graded algebra such that $$A=\frac{k\langle x,y\rangle}{(ax^2+by^2)}, ab=0,(a,b)\neq (0,0), |x|=|y|=1.$$
Then $A$ is not left (right) Gorenstein.
\end{lem}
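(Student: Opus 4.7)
My plan is to adapt the computation of Lemma~\ref{nongorone}. Since $ab = 0$ with $(a,b) \neq (0,0)$, exactly one of $a, b$ vanishes; by the symmetry $x \leftrightarrow y$ and rescaling I may assume $b = 0$ and $a = 1$, so $A = k\langle x, y\rangle/(x^2)$. I will construct a minimal free resolution of ${}_Ak$, dualize via $\Hom_A(-, A)$, and exhibit $\mathrm{Ext}_A^1(k, A)$ as infinite-dimensional (with all other Exts vanishing), so that $\dim_k \mathrm{Ext}_A^*(k, A) \neq 1$.

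Working with the reduced-word basis of $A$ (words in $x, y$ with no substring $xx$), I would first check two elementary facts: $y$ acts as a two-sided nonzerodivisor on $A$, and the annihilator $\{a \in A : xa = 0\}$ coincides with $xA$ as a subspace. These observations produce a resolution of the same shape as in Lemma~\ref{nongorone}: $F_0 = A$, $F_1 = Ae_x \oplus Ae_y$ with $d_1(e_x) = x$ and $d_1(e_y) = y$, and $F_n = Ae_n$ with $d_n(e_n) = xe_{n-1}$ for every $n \geq 2$ --- exactly the $b = 0$ specialization of that lemma's resolution. Dualizing produces the complex
$$0 \to A \xrightarrow{a \mapsto (xa,\, ya)} A \oplus A \xrightarrow{(a,b)\mapsto xa} A \xrightarrow{a \mapsto xa} A \xrightarrow{a \mapsto xa} \cdots,$$
and a direct inspection yields $\mathrm{Ext}^0 = 0$ (by left-injectivity of $y$), $\mathrm{Ext}^n = 0$ for $n \geq 2$ (both kernel and image equal $xA$), and in degree $1$ a short exact sequence $0 \to xA \to \mathrm{Ext}_A^1(k, A) \to A/yA \to 0$ whose outer terms are patently infinite-dimensional. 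Hence $\dim_k \mathrm{Ext}_A^*(k, A) \neq 1$, so $A$ is not left Gorenstein. The right-Gorenstein statement follows from the parallel computation on the opposite algebra.

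The only step that is not pure bookkeeping is the annihilator identity $\{a \in A : xa = 0\} = xA$, a short combinatorial verification on reduced words; it is precisely what prevents the resolution from stabilizing in low degree and what keeps $\mathrm{Ext}_A^1$ large. Every other step --- writing down the $d_n$, dualizing, and computing cohomology --- is mechanical and directly inherited from the $b = 0$ limit of Lemma~\ref{nongorone}.
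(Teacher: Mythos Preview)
Your proposal is correct and follows essentially the same route as the paper: reduce by symmetry to a single case, write down the same periodic minimal free resolution (the paper takes $a=0$ rather than your $b=0$), dualize, and observe that $\mathrm{Ext}^1_A(k,A)$ is infinite-dimensional while all other $\mathrm{Ext}^i$ vanish. One small point: the annihilator identity you quote, $\{a\in A : xa=0\}=xA$, is the one needed for the dual complex; exactness of the left-module resolution itself uses the companion identity $\{a\in A : ax=0\}=Ax$, which holds by the same reduced-word argument (or via the obvious anti-automorphism of $A$).
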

\begin{proof}
Without the loss of generality, we assume that $a=0,b\neq 0$.
The trivial module ${}_Ak$ admits a finitely generated  minimal free resolution
$$\cdots \xrightarrow{d_{n+1}} F_{n} \xrightarrow{d_n} F_{n-1}\xrightarrow{d_{n-1}}\cdots \xrightarrow{d_3}  F_2\xrightarrow{d_2}F_1=Ae_x\oplus Ae_y
\xrightarrow{d_1} A\xrightarrow{\varepsilon} {}_Ak\rightarrow 0,$$
where \begin{align*}
&F_n=Ae_n, d_n(e_n)=(by)e_{n-1}, n\ge 3;\\
&d_2(e_2)=(by)e_y, d_1(e_x)=x, d_1(e_y)=y.\\
\end{align*}
Acting the functor $\Hom_{A}(-,A)$ on the deleted complex of the minimal free resolution above, we obtain the complex
$$0\to 1^*A \xrightarrow{d_1^*}e_x^*A\oplus e_y^*A  \xrightarrow{d_2^*} e_2^*A\xrightarrow{d_3^*}e_3^*A \xrightarrow{d_4^*}\cdots \xrightarrow{d_n^*}e_n^*A \xrightarrow{d_{n+1}^*}\cdots, $$
where
\begin{align*}
&d_1^*(1^*)=e_x^*x+e_y^*y; d_2^*(e_x^*)=0, d_2^*(e_y^*)=e_r^*(by);\\
&d_{i+1}^*(e_i^*)=e_{i+1}^*(by), i\ge 2.\\
\end{align*}
 \begin{align*}
 &\mathrm{Ext}_A^0(k,A)=\mathrm{ker}(d_1^*)=0;\\
 &\mathrm{Ext}_A^1(k,A)=\frac{\mathrm{ker}(d_2^*)}{\mathrm{im}(d_1^*)}=\frac{e_x^*A\oplus (e_x^*x+e_y^*y)A}{(e_x^*x+e_y^*y)A}\cong e_x^*A;\\
 &\mathrm{Ext}_A^i(k,A)=\frac{\mathrm{ker}(d_{i+1}^*)}{\mathrm{im}(d_i^*)}=\frac{e_i^*(by)A}{e_i^*(by)A}=0, i\ge 2.
 \end{align*}
Since $\dim_k \mathrm{Ext}_A^*(k,A)\neq 1$,  $A$ is not left Gorenstein. Similarly, we can show that $A$ is not right Gorenstein.

\end{proof}

\section{some basic lemmas}
In this section, we give some simple lemmas, which will be used in the subsequent computations. If there is no special assumption is emphasized, we
let $\mathcal{A}$ be a DG Sklyanin algebra with $\mathcal{A}^{\#}=S_{a,a,0}$, and $\partial_{\mathcal{A}}$ is determined by a matrix $M$ in $M_3(k)$.
\begin{lem}\label{centcocy}
For any $t\in \Bbb{N}$, $x_1^{2t}, x_2^{2t}, x_3^{2t}$ are cocycle central elements of $\mathcal{A}$.
\end{lem}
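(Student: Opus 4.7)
The plan is to verify the two properties (centrality and cocycle) separately, first for $t=1$ and then extend by a short induction.

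First I would establish centrality of $x_i^2$. Since $\mathcal{A}^{\#} = S_{a,a,0}$ satisfies the anticommutation relations $x_ix_j = -x_jx_i$ for $i \neq j$, a direct computation gives
\begin{align*}
x_i^2 x_j = x_i(x_ix_j) = -x_i(x_jx_i) = (x_jx_i)x_i = x_j x_i^2
\end{align*}
for $j \neq i$, and trivially $x_i^2$ commutes with $x_i$. Hence each $x_i^2$ lies in the center of $\mathcal{A}^{\#}$, and consequently so does $x_i^{2t}$ for every $t \in \mathbb{N}$.

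Next I would check the cocycle property for $x_i^2$. By the Leibniz rule and $|x_i|=1$,
\begin{align*}
\partial_{\mathcal{A}}(x_i^2) = \partial_{\mathcal{A}}(x_i)x_i - x_i\partial_{\mathcal{A}}(x_i).
\end{align*}
Since $\partial_{\mathcal{A}}(x_i)$ is a $k$-linear combination of $x_1^2, x_2^2, x_3^2$ (this is the defining form of $\partial_{\mathcal{A}}$ from the matrix $M$), and these squares are central by the previous step, we conclude $\partial_{\mathcal{A}}(x_i)x_i = x_i\partial_{\mathcal{A}}(x_i)$, so $\partial_{\mathcal{A}}(x_i^2) = 0$.

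Finally, to pass to $x_i^{2t}$, I would induct on $t$. Since $|x_i^2|=2$ is even, Leibniz yields
\begin{align*}
\partial_{\mathcal{A}}(x_i^{2t}) = \partial_{\mathcal{A}}(x_i^2)\cdot x_i^{2(t-1)} + x_i^2\cdot \partial_{\mathcal{A}}(x_i^{2(t-1)}),
\end{align*}
and both summands vanish by induction. There is no substantive obstacle here; the only subtle point is to notice that the centrality of the $x_j^2$ in $\mathcal{A}^{\#}$ is exactly what kills the sign discrepancy in $\partial_{\mathcal{A}}(x_i)x_i - x_i\partial_{\mathcal{A}}(x_i)$, which is what makes $\partial_{\mathcal{A}}(x_i^2)=0$ even though $\partial_{\mathcal{A}}(x_i)\ne 0$ in general.
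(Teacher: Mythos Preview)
Your proof is correct and follows essentially the same approach as the paper: first verify centrality of $x_i^2$ via the anticommutation relations, then use the Leibniz rule together with that centrality to get $\partial_{\mathcal{A}}(x_i^2)=0$, and finally induct to reach $x_i^{2t}$. Your write-up is in fact slightly more explicit in the induction step than the paper's, which simply asserts that the inductive argument goes through.
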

\begin{proof}
One sees that $x_i^2$ is a central element of $\mathcal{A}$ since $$x_i^2x_j=x_ix_ix_j=-x_ix_jx_i=x_jx_i^2, $$ when $i\neq j$. This implies that each
$x_i^{2t}$ is a central element of  $\mathcal{A}$.
We have \begin{align*}
\partial_{\mathcal{A}}(x_i^2)&=\partial_{\mathcal{A}}(x_i)x_i-x_i\partial_{\mathcal{A}}(x_i)\\
                             &=\sum\limits_{j=1}^nm_{ij}x_j^2x_i-x_i\sum\limits_{j=1}^nm_{ij}x_j^2\\
                             &=\sum\limits_{j=1}^nm_{ij}(x_j^2x_i-x_ix_j^2)=0.
\end{align*}
Using this, we can inductively prove $\partial_{\mathcal{A}}(x_i^{2t})=0$.
\end{proof}

\begin{lem}\label{coboundary}
Let $\Omega$ be a coboundary element in $\mathcal{A}$ of degree $d\ge 3$.

$(1)$ If $d=2l+1$ is odd, then $\Omega=\partial_{\mathcal{A}}[x_1x_2f+x_1x_3g+x_2x_3h]$, where $f, g$ and $h$ are all linear combinations of monomials with non-negative even exponents.

$(2)$ If $d=2l$ is even, then $\Omega=\partial_{\mathcal{A}}[x_1f+x_2g+x_3h+x_1x_2x_3u]$, where $f, g$, $h$ and $u$ are all linear combinations of monomials with non-negative even exponents.
\end{lem}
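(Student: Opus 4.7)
The plan is to choose any pre-image $\omega \in \mathcal{A}^{d-1}$ with $\partial_{\mathcal{A}}\omega = \Omega$, decompose $\omega$ according to the parities of the exponents in its monomial expansion, absorb the automatic cocycle parts, and regroup the remainder via the skew-commutation relations. Since $\mathcal{A}^{\#}$ is the $(-1)$-skew polynomial algebra on three variables, the PBW property yields a unique expansion of $\omega$ as a $k$-linear combination of ordered monomials $x_1^{i_1}x_2^{i_2}x_3^{i_3}$ with $i_1+i_2+i_3 = d-1$. I would partition these monomials by the parity triple $(i_1\bmod 2, i_2\bmod 2, i_3\bmod 2)$; since $i_1+i_2+i_3=d-1$, in case (1) (with $d-1=2l$ even) only the parity types $\mathrm{EEE}, \mathrm{OOE}, \mathrm{OEO}, \mathrm{EOO}$ can appear, while in case (2) (with $d-1=2l-1$ odd) only $\mathrm{OEE}, \mathrm{EOE}, \mathrm{EEO}, \mathrm{OOO}$ can appear.

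For case (1), the key observation is that every $\mathrm{EEE}$ monomial $x_1^{2a}x_2^{2b}x_3^{2c}$ is a cocycle: by Lemma~\ref{centcocy} each $x_i^{2t}$ is a central cocycle, and the graded Leibniz rule together with the evenness of the relevant degrees forces $\partial_{\mathcal{A}}(x_1^{2a}x_2^{2b}x_3^{2c}) = 0$. Hence the $\mathrm{EEE}$-part of $\omega$ can be subtracted without altering $\Omega$. The remaining $\mathrm{OOE}$, $\mathrm{OEO}$, $\mathrm{EOO}$ monomials factor uniquely as $x_1x_2\cdot m$, $x_1x_3\cdot m$, $x_2x_3\cdot m$ respectively, where $m=x_1^{2a}x_2^{2b}x_3^{2c}$ has all-even exponents; the reorderings introduce no signs because in $\mathcal{A}^{\#}$ we have $x_ix_j^{2k}=x_j^{2k}x_i$ identically. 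Collecting terms within each class gives $\Omega=\partial_{\mathcal{A}}[x_1x_2 f + x_1x_3 g + x_2x_3 h]$ as required.

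Case (2) proceeds by the same strategy but omits the cocycle-subtraction step, since none of the four surviving parity types consists of automatic cocycles. The anticommutation argument factors $\mathrm{OEE}, \mathrm{EOE}, \mathrm{EEO}, \mathrm{OOO}$ monomials respectively as $x_1\cdot m$, $x_2\cdot m$, $x_3\cdot m$, $x_1x_2x_3\cdot m$ with $m$ of all-even exponents, giving $\Omega=\partial_{\mathcal{A}}[x_1 f + x_2 g + x_3 h + x_1x_2x_3 u]$. The main obstacle is essentially bookkeeping: one must track all eight parity classes, identify $\mathrm{EEE}$ as the unique class supplying automatic cocycles via Lemma~\ref{centcocy}, and check that each of the remaining classes admits the prescribed factorization; the sign-free nature of the reorderings (the moved factor only crosses even powers of the other generators) keeps the argument clean, and the hypothesis $d\ge 3$ merely ensures that the pre-image $\omega$ has degree $\ge 2$, so that the factorizations make sense.
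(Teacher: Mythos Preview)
Your proposal is correct and follows essentially the same approach as the paper: decompose a preimage into parity classes of the exponent triples, discard the all-even class in case~(1) using Lemma~\ref{centcocy}, and factor each remaining class through $x_1x_2$, $x_1x_3$, $x_2x_3$ (respectively $x_1$, $x_2$, $x_3$, $x_1x_2x_3$ in case~(2)) with an even-exponent cofactor. The paper carries this out by writing the sums explicitly, but the content is identical to what you describe.
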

\begin{proof}
By the assumption, we have $$\Omega=\partial_{\mathcal{A}}[\sum_{\substack{
l_1+l_2+l_3=d-1\\
l_1,l_2,l_3\ge 0
}}
C_{l_1,l_2,l_3}x_1^{l_1}x_2^{l_2}x_3^{l_3}].$$
If $d=2l+1$ is odd, then $d=2l$ is even.  Since \begin{align*}
&\quad\quad \sum_{\substack{
l_1+l_2+l_3=d-1\\
l_1,l_2,l_3\ge 0
}}
C_{l_1,l_2,l_3}x_1^{l_1}x_2^{l_2}x_3^{l_3}\\
&=\sum_{\substack{
l_1+l_2+l_3=d-1\\
l_1,l_2,l_3\ge 0\\
l_1,l_2 \,\text{are odd},\, l_3\, \text{is even}
}}
C_{l_1,l_2,l_3}x_1^{l_1}x_2^{l_2}x_3^{l_3}+\sum_{\substack{
l_1+l_2+l_3=d-1\\
l_1,l_2,l_3\ge 0\\
l_1,l_3 \,\text{are odd},\, l_2\, \text{is even}
}}
C_{l_1,l_2,l_3}x_1^{l_1}x_2^{l_2}x_3^{l_3}\\
&+\sum_{\substack{
l_1+l_2+l_3=d-1\\
l_1,l_2,l_3\ge 0\\
l_2,l_3 \,\text{are odd},\, l_1\, \text{is even}
}}
C_{l_1,l_2,l_3}x_1^{l_1}x_2^{l_2}x_3^{l_3}
+\sum_{\substack{
l_1+l_2+l_3=d-1\\
l_1,l_2,l_3\ge 0\\
l_1,l_2,l_3\, \text{are even}
}}
C_{l_1,l_2,l_3}x_1^{l_1}x_2^{l_2}x_3^{l_3},
\end{align*}
 we have
\begin{align*}
\Omega &=\partial_{\mathcal{A}}[\sum_{\substack{
l_1+l_2+l_3=d-1\\
l_1,l_2,l_3\ge 0
}}
C_{l_1,l_2,l_3}x_1^{l_1}x_2^{l_2}x_3^{l_3}]\\
&=\partial_{\mathcal{A}}[x_1x_2\sum_{\substack{
l_1+l_2+l_3=d-1\\
l_1,l_2,l_3\ge 0\\
l_1,l_2 \,\text{are odd},\, l_3 \text{is even}
}}
C_{l_1,l_2,l_3}x_1^{l_1-1}x_2^{l_2-1}x_3^{l_3}] \\
&+\partial_{\mathcal{A}}[x_1x_3\sum_{\substack{
l_1+l_2+l_3=d-1\\
l_1,l_2,l_3\ge 0\\
l_1,l_3 \,\text{are odd},\, l_2 \text{is even}
}}
C_{l_1,l_2,l_3}x_1^{l_1-1}x_2^{l_2}x_3^{l_3-1}] \\
&+\partial_{\mathcal{A}}[x_2x_3\sum_{\substack{
l_1+l_2+l_3=d-1\\
l_1,l_2,l_3\ge 0\\
l_2,l_3 \,\text{are odd},\, l_1 \text{is even}
}}
C_{l_1,l_2,l_3}x_1^{l_1}x_2^{l_2-1}x_3^{l_3-1}]
\end{align*}
by Lemma \ref{centcocy}.
Let \begin{align*}f&=\sum\limits_{\substack{
l_1+l_2+l_3=d-1\\
l_1,l_2,l_3\ge 0\\
l_1,l_2 \,\text{are odd},\, l_3 \text{is even}
}}
C_{l_1,l_2,l_3}x_1^{l_1-1}x_2^{l_2-1}x_3^{l_3},\\
 g&=\sum_{\substack{
l_1+l_2+l_3=d-1\\
l_1,l_2,l_3\ge 0\\
l_1,l_3 \,\text{are odd},\, l_2 \text{is even}
}}
C_{l_1,l_2,l_3}x_1^{l_1-1}x_2^{l_2}x_3^{l_3-1},\\
h& =\sum_{\substack{
l_1+l_2+l_3=d-1\\
l_1,l_2,l_3\ge 0\\
l_2,l_3 \,\text{are odd},\, l_1 \text{is even}
}}
C_{l_1,l_2,l_3}x_1^{l_1}x_2^{l_2-1}x_3^{l_3-1}.
\end{align*}
Then we prove $(1)$.

If $d=2l$ is even, then $d-1=2l-1$ is odd.  Since \begin{align*}
&\quad\quad \sum_{\substack{
l_1+l_2+l_3=d-1\\
l_1,l_2,l_3\ge 0
}}
C_{l_1,l_2,l_3}x_1^{l_1}x_2^{l_2}x_3^{l_3}\\
&=\sum_{\substack{
l_1+l_2+l_3=d-1\\
l_1,l_2,l_3\ge 0\\
l_1,l_2 \,\text{are even},\, l_3\, \text{is odd}
}}
C_{l_1,l_2,l_3}x_1^{l_1}x_2^{l_2}x_3^{l_3}+\sum_{\substack{
l_1+l_2+l_3=d-1\\
l_1,l_2,l_3\ge 0\\
l_1,l_3 \,\text{are even},\, l_2\, \text{is odd}
}}
C_{l_1,l_2,l_3}x_1^{l_1}x_2^{l_2}x_3^{l_3}\\
&+\sum_{\substack{
l_1+l_2+l_3=d-1\\
l_1,l_2,l_3\ge 0\\
l_2,l_3 \,\text{are even},\, l_1\, \text{is odd}
}}
C_{l_1,l_2,l_3}x_1^{l_1}x_2^{l_2}x_3^{l_3}
+\sum_{\substack{
l_1+l_2+l_3=d-1\\
l_1,l_2,l_3\ge 0\\
l_1,l_2,l_3\, \text{are odd}
}}
C_{l_1,l_2,l_3}x_1^{l_1}x_2^{l_2}x_3^{l_3},
\end{align*}
we have
\begin{align*}
\Omega &=\partial_{\mathcal{A}}[\sum_{\substack{
l_1+l_2+l_3=d-1\\
l_1,l_2,l_3\ge 0
}}
C_{l_1,l_2,l_3}x_1^{l_1}x_2^{l_2}x_3^{l_3}]\\
&=\partial_{\mathcal{A}}[x_3\sum_{\substack{
l_1+l_2+l_3=d-1\\
l_1,l_2,l_3\ge 0\\
l_1,l_2 \,\text{are even},\, l_3\, \text{is odd}
}}
C_{l_1,l_2,l_3}x_1^{l_1}x_2^{l_2}x_3^{l_3-1}]\\
&+\partial_{\mathcal{A}}[x_2\sum_{\substack{
l_1+l_2+l_3=d-1\\
l_1,l_2,l_3\ge 0\\
l_1,l_3 \,\text{are even},\, l_2\, \text{is odd}
}}
C_{l_1,l_2,l_3}x_1^{l_1}x_2^{l_2-1}x_3^{l_3}]\\
&+\partial_{\mathcal{A}}[x_1\sum_{\substack{
l_1+l_2+l_3=d-1\\
l_1,l_2,l_3\ge 0\\
l_2,l_3 \,\text{are even},\, l_1\, \text{is odd}
}}
C_{l_1,l_2,l_3}x_1^{l_1-1}x_2^{l_2}x_3^{l_3}]\\
&+\partial_{\mathcal{A}}[x_1x_2x_3\sum_{\substack{
l_1+l_2+l_3=d-1\\
l_1,l_2,l_3\ge 0\\
l_1,l_2,l_3\, \text{are odd}
}}
C_{l_1,l_2,l_3}x_1^{l_1-1}x_2^{l_2-1}x_3^{l_3-1}].
\end{align*}
Let \begin{align*}
f=\sum_{\substack{
l_1+l_2+l_3=d-1\\
l_1,l_2,l_3\ge 0\\
l_2,l_3 \,\text{are even},\, l_1\, \text{is odd}
}}
C_{l_1,l_2,l_3}x_1^{l_1-1}x_2^{l_2}x_3^{l_3},\\
g=\sum_{\substack{
l_1+l_2+l_3=d-1\\
l_1,l_2,l_3\ge 0\\
l_1,l_3 \,\text{are even},\, l_2\, \text{is odd}
}}
C_{l_1,l_2,l_3}x_1^{l_1}x_2^{l_2-1}x_3^{l_3},\\
h=\sum_{\substack{
l_1+l_2+l_3=d-1\\
l_1,l_2,l_3\ge 0\\
l_1,l_2 \,\text{are even},\, l_3\, \text{is odd}
}}
C_{l_1,l_2,l_3}x_1^{l_1}x_2^{l_2}x_3^{l_3-1},\\
u=\sum_{\substack{
l_1+l_2+l_3=d-1\\
l_1,l_2,l_3\ge 0\\
l_1,l_2,l_3\, \text{are odd}
}}
C_{l_1,l_2,l_3}x_1^{l_1-1}x_2^{l_2-1}x_3^{l_3-1}.
\end{align*}
Then we prove $(2)$.
\end{proof}

\begin{lem}\label{boundary}
Let $M=(m_{ij})_{3\times 3}$ be a matrix in $\mathrm{GL}_3(k)$. Then $x_1^2,x_2^2,x_3^2$ are coboundary elements in $\mathcal{A}$.
\end{lem}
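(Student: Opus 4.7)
The plan is essentially a direct application of invertibility of $M$. The defining relation
\begin{equation*}
\begin{pmatrix} \partial_{\mathcal{A}}(x_1) \\ \partial_{\mathcal{A}}(x_2) \\ \partial_{\mathcal{A}}(x_3) \end{pmatrix} = M \begin{pmatrix} x_1^2 \\ x_2^2 \\ x_3^2 \end{pmatrix}
\end{equation*}
says that the column vector of the $x_i^2$ is obtained from the column vector of the $\partial_{\mathcal{A}}(x_i)$ by the linear map $M$. So the first step is to use the hypothesis $M \in \mathrm{GL}_3(k)$ to introduce $N = M^{-1} = (n_{ij})_{3\times 3}$.

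Next, I would left-multiply both sides of the defining relation by $N$, obtaining
\begin{equation*}
\begin{pmatrix} x_1^2 \\ x_2^2 \\ x_3^2 \end{pmatrix} = N \begin{pmatrix} \partial_{\mathcal{A}}(x_1) \\ \partial_{\mathcal{A}}(x_2) \\ \partial_{\mathcal{A}}(x_3) \end{pmatrix},
\end{equation*}
which gives, for each $i$, the identity $x_i^2 = \sum_{j=1}^{3} n_{ij}\, \partial_{\mathcal{A}}(x_j)$. Because $\partial_{\mathcal{A}}$ is $k$-linear, the right-hand side equals $\partial_{\mathcal{A}}\bigl(\sum_{j=1}^{3} n_{ij}\, x_j\bigr)$, exhibiting $x_i^2$ as a coboundary.

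There is no real obstacle here; the argument is just linear algebra applied to the defining matrix equation, and no cohomological machinery beyond the definition of a coboundary is required. The previous lemma (Lemma \ref{centcocy}) is not needed for this step, although it will be used in subsequent computations of $H(\mathcal{A})$.
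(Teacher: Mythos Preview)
Your proof is correct and is essentially the same argument as the paper's: both use the invertibility of $M$ to write each $x_i^2$ as $\partial_{\mathcal{A}}\bigl(\sum_j n_{ij}x_j\bigr)$ for $N=M^{-1}$, the only difference being that the paper phrases this as solving $M^T(a_1,a_2,a_3)^T=e_i$ and writes out the solution via Cramer's rule. Your formulation is a bit cleaner but not genuinely different.
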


\begin{proof}
For $\forall a_1,a_2,a_3\in k$, we have \begin{align*}
&\quad\partial_{\mathcal{A}}(c_1x_1+c_2x_2+c_3x_3)\\
&=a_1(m_{11}x_1^2+m_{12}x_2^2+m_{13}x_3^2)+a_2(m_{21}x_1^2+m_{22}x_2^2+m_{23}x_3^2)\\
                                            &+a_3(m_{31}x_1^2+m_{32}x_2^2+m_{33}x_3^2)\\
                                            &=(a_1m_{11}+a_2m_{21}+a_3m_{31})x_1^2+(a_1m_{12}+a_2m_{22}+a_3m_{32})x_2^2\\
                                            &+(a_1m_{13}+a_2m_{23}+a_3m_{33})x_3^2.
\end{align*}
So $\partial_{\mathcal{A}}(a_1x_1+a_2x_2+a_3x_3)=x_1^2$ if and only if
\begin{align*}
\begin{cases}
a_1m_{11}+a_2m_{21}+a_3m_{31}=1\\
a_1m_{12}+a_2m_{22}+a_3m_{32}=0\\
a_1m_{13}+a_2m_{23}+a_3m_{33}=0
\end{cases}\Leftrightarrow M^T\left(
                                \begin{array}{c}
                                  a_1 \\
                                  a_2 \\
                                  a_3 \\
                                \end{array}
                              \right)=\left(
                                        \begin{array}{c}
                                          1 \\
                                          0 \\
                                          0 \\
                                        \end{array}
                                      \right).
\end{align*}
Since $r(M)=3$, there exists $$
\begin{cases}
                                                                 a_1=\frac{m_{22}m_{33}-m_{23}m_{32}}{|M|} \\
                                                                 a_2=\frac{m_{13}m_{32}-m_{12}m_{33}}{|M|} \\
                                                                 a_3=\frac{m_{12}m_{23}-m_{13}m_{22}}{|M|}
\end{cases}
$$ such that $\partial_{\mathcal{A}}(a_1x_1+a_2x_2+a_3x_3)=x_1^2$.  Similarly, we can show there exist
$$
\begin{cases}
                                                                 b_1=\frac{m_{23}m_{31}-m_{21}m_{33}}{|M|} \\
                                                                 b_2=\frac{m_{11}m_{33}-m_{13}m_{31}}{|M|} \\
                                                                 b_3=\frac{m_{13}m_{21}-m_{11}m_{23}}{|M|}
\end{cases}\quad \text{and}\quad \begin{cases}
                                                                 c_1=\frac{m_{21}m_{32}-m_{22}m_{31}}{|M|} \\
                                                                 c_2=\frac{m_{12}m_{31}-m_{11}m_{32}}{|M|} \\
                                                                 c_3=\frac{m_{11}m_{22}-m_{12}m_{21}}{|M|}
\end{cases}
$$
such that $\partial_{\mathcal{A}}(b_1x_1+b_2x_2+b_3x_3)=x_2^2$ and $\partial_{\mathcal{A}}(c_1x_1+c_2x_2+c_3x_3)=x_3^2$, respectively.

\end{proof}

\begin{lem}\label{ithree}
Let $M=(m_{ij})_{3\times 3}$ be a matrix in $\mathrm{GL}_3(k)$ and $m_{22}m_{33}-m_{23}m_{32}\neq 0$. If
$g(\bar{x_2},\bar{x_3})\in Z^{2l+1}[\mathcal{A}/(x_1^2)]$ and  $h(\bar{x_2},\bar{x_3})\in Z^{2l}[\mathcal{A}/(x_1^2)]$ are sum of monomials in variables $\bar{x_2}$ and $\bar{x_3}$ with $l\ge 1$.
Then $$h(\bar{x_2},\bar{x_3})=\sum\limits_{i=0}^{l}r_{2i}\bar{x_2}^{2l-2i}\bar{x_3}^{2i}\quad \text{with}\quad
r_{2i}\in k, 0\le i\le l.$$ Furthermore, there exist $u(x_2,x_3)$ and $v(x_2,x_3)$, which are sums of monomials  in variables $x_2$ and $x_3$, such that
\begin{align*}
\begin{cases}
g(\bar{x_2},\bar{x_3})=\overline{\partial_{\mathcal{A}}[u(x_2,x_3)]},\\
h(\bar{x_2},\bar{x_3})=\overline{\partial_{\mathcal{A}}[v(x_2,x_3)]}.
\end{cases}
\end{align*}
\end{lem}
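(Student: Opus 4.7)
The key structural observation is that all monomials in $\bar{x_2}, \bar{x_3}$ live in the DG subalgebra $\bar{B} := k\langle \bar{x_2}, \bar{x_3}\rangle \subseteq \mathcal{A}/(x_1^2)$, whose underlying graded algebra is the skew polynomial algebra in two degree-$1$ variables and whose differential is governed by the $2\times 2$ block $\tilde{M} = \bigl(\begin{smallmatrix} m_{22} & m_{23} \\ m_{32} & m_{33}\end{smallmatrix}\bigr)\in \mathrm{GL}_2(k)$, since the $m_{i1}x_1^2$ pieces die in the quotient. The plan is to exploit the decomposition $\bar{B} = \bar{B}_{ee}\oplus \bar{B}_{eo}\oplus \bar{B}_{oe}\oplus \bar{B}_{oo}$ by parities of the exponents in the normal-form basis $\{\bar{x_2}^a\bar{x_3}^b\}$. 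Using Lemma \ref{centcocy} and the graded Leibniz rule, I would first record that $\partial \bar{B}_{ee} = 0$, $\partial \bar{B}_{eo}\cup \partial \bar{B}_{oe}\subseteq \bar{B}_{ee}$, and $\partial \bar{B}_{oo}\subseteq \bar{B}_{eo}\oplus \bar{B}_{oe}$, with an explicit four-term formula for $\partial(\bar{x_2}^{2j+1}\bar{x_3}^{2l-2j-1})$ whose coefficients are precisely $m_{22}, m_{23}, -m_{33}, -m_{32}$.

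For the first assertion, split $h = h_{ee} + h_{oo}$; since $\partial h_{ee}=0$ automatically, the cocycle condition reduces to $\partial h_{oo}=0$. Writing $h_{oo} = \sum_{j=0}^{l-1} d_j \bar{x_2}^{2j+1}\bar{x_3}^{2l-2j-1}$ and matching coefficients of the $\bar{B}_{eo}$-monomial $\bar{x_2}^{2k}\bar{x_3}^{2l-2k+1}$ against those of the $\bar{B}_{oe}$-monomial $\bar{x_2}^{2k+1}\bar{x_3}^{2l-2k}$ yields, for each interior index $1\le k\le l-1$, the $2\times 2$ linear system $\tilde{M}(d_{k-1}, d_k)^T = 0$; the boundary indices $k=0$ and $k=l$ contribute single equations that pin down $d_0$ and $d_{l-1}$ since no row or column of $\tilde{M}$ can vanish. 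Invertibility of $\tilde{M}$ then propagates through the recursion, forcing every $d_j = 0$ and giving $h = \sum_{i=0}^l r_{2i}\bar{x_2}^{2l-2i}\bar{x_3}^{2i}$.

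To exhibit $v$, invoke the $2\times 2$ analogue of Lemma \ref{boundary} applied to $\tilde{M}$, producing degree-$1$ elements $\xi_2, \xi_3\in \bar{B}^1$ with $\partial\xi_i = \bar{x_i}^2$. For each term $r_{2i}\bar{x_2}^{2l-2i}\bar{x_3}^{2i}$ with $l-i\ge 1$, the product $r_{2i}\xi_2\bar{x_2}^{2l-2i-2}\bar{x_3}^{2i}$ has boundary equal to that term, because $\bar{x_2}^{2l-2i-2}\bar{x_3}^{2i}$ is a cocycle by Lemma \ref{centcocy}; the exceptional $i = l$ term is hit symmetrically by $r_{2l}\xi_3\bar{x_3}^{2l-2}$. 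Summing these preimages and lifting along the obvious isomorphism $k\langle x_2, x_3\rangle \cong \bar{B}$ produces $v(x_2, x_3)$ with $\overline{\partial_{\mathcal{A}}v} = h$. For $g$, the same construction shifted up one degree shows that $\partial : (\bar{B}_{eo}\oplus \bar{B}_{oe})^{2l+1}\to \bar{B}_{ee}^{2l+2}$ is surjective onto its $(l+2)$-dimensional target; combined with the injectivity of $\partial|_{\bar{B}_{oo}^{2l}}$ from the previous paragraph, a dimension count gives $\dim Z^{2l+1}(\bar{B}) = \dim B^{2l+1}(\bar{B}) = l$, so the cocycle $g$ is automatically a coboundary and $u(x_2,x_3)$ exists.

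The main technical obstacle is the parity-block linear system of paragraph two: everything really does collapse to $\det\tilde{M}\neq 0$, but one must handle the boundary indices $k=0$ and $k=l$ separately (they contribute single equations rather than full $2\times 2$ systems), and the bookkeeping of parities in the four-way decomposition of $\bar{B}$ demands care. All other steps are routine direct computation and invocation of Lemmas \ref{centcocy} and \ref{boundary}.
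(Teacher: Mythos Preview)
Your proposal is correct and follows essentially the same approach as the paper: both reduce to linear algebra on the two-variable DG subalgebra governed by $\tilde{M}$, show the odd--odd part of $h$ vanishes by pairing adjacent coefficients against $\det\tilde{M}\neq 0$, build $v$ from the $2\times 2$ analogue of Lemma \ref{boundary}, and for $g$ show $Z^{2l+1}=B^{2l+1}$ by a dimension count. The only cosmetic difference is packaging: the paper writes out the cocycle conditions as the explicit block-tridiagonal systems (\ref{teqs}) and (\ref{reqs}), computes the rank of the coefficient matrix of (\ref{teqs}) to be $l+2$, and then exhibits an explicit basis of $l$ coboundaries $\overline{\partial_{\mathcal{A}}(x_2^{2l-2i+1}x_3^{2i-1})}$ filling $Z^{2l+1}$, whereas you get the same numbers from the parity decomposition (surjectivity onto $\bar{B}_{ee}^{2l+2}$ and injectivity on $\bar{B}_{oo}^{2l}$). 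One small inaccuracy: the boundary indices $k=0$ and $k=l$ each yield two equations rather than one, but your parenthetical ``no row or column of $\tilde{M}$ can vanish'' is exactly the right reason those equations force $d_0=d_{l-1}=0$, so the argument goes through.
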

\begin{proof}
 Let $g(\bar{x_2}.\bar{x_3})=\sum\limits_{j=0}^{2l+1}t_j\bar{x_2}^{2l+1-j}\bar{x_3}^j$ and
$h(\bar{x_2},\bar{x_3})=\sum\limits_{j=0}^{2l}r_j\bar{x_2}^{2l-j}\bar{x_3}^j$, where each $t_j, r_j\in k$. Then
\begin{align*}
&\quad 0=\overline{\partial_{\mathcal{A}}(\sum\limits_{j=0}^{2l+1}t_jx_2^{2l+1-j}x_3^j)}\\
&=\overline{\partial_{\mathcal{A}}(\sum\limits_{i=0}^{l}t_{2i}x_2^{2l-1-2i}x_3^{2i}+\sum\limits_{i=1}^{l+1}t_{2i-1}x_2^{2l-2i}x_3^{2i-1})}\\
&=\sum\limits_{i=0}^{l}[t_{2i}(m_{22}\bar{x_2}^2+m_{23}\bar{x_3}^2)\bar{x_2}^{2l-2i-2}\bar{x_3}^{2i}+t_{2i+1}\bar{x_2}^{2l-2i-2}\bar{x_3}^{2i}(m_{32}\bar{x_2}^2+m_{33}\bar{x_3}^2)] \\
&=\sum\limits_{i=0}^{l}[(t_{2i}m_{22}+t_{2i+1}m_{32})\bar{x_2}^{2l-2i}\bar{x_3}^{2i}+(t_{2i}m_{23}+t_{2i+1}m_{33})\bar{x_2}^{2l-2i-2}\bar{x_3}^{2i+2}]
\\
\end{align*}
and
\begin{align*}
&\quad 0=\overline{\partial_{\mathcal{A}}(\sum\limits_{j=0}^{2l}r_jx_2^{2l-j}x_3^j)}\\
 &=\sum\limits_{i=1}^{l}r_{2i-1}[(m_{22}\bar{x_2}^2+m_{23}\bar{x_3}^2)\bar{x_2}^{2l-2i}\bar{x_3}^{2i-1}-\bar{x_2}^{2l-2i+1}\bar{x_3}^{2i-2}(m_{32}\bar{x_2}^2+m_{33}\bar{x_3}^2)].
\end{align*}
They imply
\begin{align}\label{teqs}
\begin{cases}
t_0m_{22}+t_1m_{32}=0\\
t_2m_{22}+t_3m_{32}+t_0m_{23}+t_1m_{33}=0\\
t_4m_{22}+t_5m_{32}+t_2m_{23}+t_3m_{33}=0\\
........  \\
t_{2l-2}m_{22}+t_{2l-1}m_{32}+t_{2l-4}m_{23}+t_{2l-3}m_{33}=0\\
t_{2l}m_{22}+t_{2l+1}m_{32}+t_{2l-2}m_{23}+t_{2l-1}m_{33}=0\\
t_{2l}m_{23}+t_{2l+1}m_{33}=0
\end{cases}
\end{align}
and
\begin{align}\label{reqs}
\begin{cases}
r_1m_{32}=0\\
r_1m_{22}=0\\
r_1m_{33}+r_3m_{32}=0\\
r_1m_{23}+r_{3}m_{22}=0\\
........  \\
r_{2l-3}m_{33}+r_{2l-1}m_{32}=0\\
r_{2l-3}m_{23}+r_{2l-1}m_{22}=0\\
r_{2l-1}m_{33}=0 \\
r_{2l-1}m_{23}=0.
\end{cases}
\end{align}

Since $m_{22}m_{33}-m_{23}m_{32}\neq 0$,  the rank of the system matrix
\begin{tiny}$$\left(
  \begin{array}{ccccccccccccccc}
    m_{22} & m_{32} & 0 & 0 & 0 & 0 & 0 & \cdots & 0 & 0 & 0 & 0 & 0 & 0 & 0 \\
    m_{23} & m_{33} & m_{22} & m_{32} & 0 & 0 & 0 & \cdots & 0 & 0 & 0 & 0 & 0 & 0 & 0 \\
    0 & 0 & m_{23} & m_{33} & m_{22} & m_{32} & 0 & \cdots & 0 & 0 & 0 & 0 & 0 & 0 & 0 \\
    \cdots & \cdots & \cdots & \cdots & \cdots & \cdots & \cdots & \cdots & \cdots & \cdots & \cdots & \cdots & \cdots & \cdots & \cdots \\
    0& 0 & 0 & 0 & 0 & 0 & 0 & \cdots & 0 & m_{23} & m_{33} & m_{22} & m_{32} & 0 & 0 \\
    0 & 0 & 0 & 0& 0& 0 & 0 & \cdots& 0 & 0 & 0 & m_{23} & m_{33}& m_{22} & m_{32} \\
    0 & 0 & 0 & 0 & 0 & 0 & 0 & \cdots & 0 & 0 & 0 & 0 & 0 & m_{23} & m_{33} \\
  \end{array}
\right)$$ \end{tiny} of (\ref{teqs}) is $l+2$. Hence the space of
the solutions of (\ref{teqs}) is of dimension $l$.
On the other
hand, for any $1\le i\le l$,
$\overline{\partial_{\mathcal{A}}(x_2^{2l-2i+1}x_3^{2i-1})}$ is
$$-m_{32}\bar{x_2}^{2l-2i+3}\bar{x_3}^{2i-2} +m_{22}\bar{x_2}^{2l-2i+2}\bar{x_3}^{2i-1} -m_{33}\bar{x_2}^{2l-2i+1}\bar{x_3}^{2i}+
m_{23}\bar{x_2}^{2l-2i}\bar{x_3}^{2i+1}.$$ Therefore, $\{\left(
                                                   \begin{array}{c}
                                                     -m_{32}\\
                                                     m_{22}\\
                                                     -m_{33}\\
                                                     m_{23}\\
                                                     0\\
                                                     0 \\
                                                     0 \\
                                                     \vdots \\
                                                     0 \\
                                                     0 \\
                                                     0 \\
                                                     0 \\
                                                     0 \\
                                                     0 \\
                                                     0 \\
                                                   \end{array}
                                                 \right),
                                                 \left(
                                                   \begin{array}{c}
                                                   0\\
                                                   0\\
                                                     -m_{32}\\
                                                     m_{22}\\
                                                     -m_{33}\\
                                                     m_{23}\\
                                                     0\\
                                                     \vdots \\
                                                     0 \\
                                                     0 \\
                                                     0 \\
                                                     0 \\
                                                     0 \\
                                                     0 \\
                                                     0 \\
                                                   \end{array}
                                                 \right),\cdots,\left(
                                                   \begin{array}{c}
                                                     0\\
                                                     0\\
                                                     0\\
                                                     0 \\
                                                     0\\
                                                     0 \\
                                                     0 \\
                                                     \vdots \\
                                                     0 \\
                                                     -m_{32} \\
                                                     m_{22} \\
                                                     -m_{33} \\
                                                     m_{23} \\
                                                     0 \\
                                                     0 \\
                                                   \end{array}
                                                 \right) , \left(
                                                   \begin{array}{c}
                                                     0 \\
                                                     0 \\
                                                     0 \\
                                                     0 \\
                                                     0\\
                                                     0 \\
                                                     0 \\
                                                     \vdots \\
                                                     0 \\
                                                     0 \\
                                                     0 \\
                                                     -m_{32}\\
                                                     m_{22}\\
                                                     -m_{33}\\
                                                     m_{23} \\
                                                   \end{array}
                                                 \right)\}$ is a
                                                 $k$-basis of the space of the
                                                 solutions of system
                                                 (\ref{teqs}). So there exists $\{s_{2i-1}\in k| 1\le i\le l\}$  such that $\overline{\partial_{\mathcal{A}}(\sum\limits_{i=1}^ls_{2i-1}x_2^{2l-2i+1}x_3^{2i-1})}=g(\bar{x_2},\bar{x_3})$.
                                                 Take $u(x_2,x_3)=\sum\limits_{i=1}^ls_{2i-1}x_2^{2l-2i+1}x_3^{2i-1}$.
                                                .

Since $\left|
                                                                               \begin{array}{cc}
                                                                                 m_{22} & m_{23} \\
                                                                                 m_{32} & m_{33} \\
                                                                               \end{array}
                                                                          \right|\neq 0$,
we can conclude $r_1=r_3=\cdots =r_{2l-1}=0$ from the system of equations (\ref{reqs}). So $h(\bar{x_2},\bar{x_3})=\sum\limits_{i=0}^{l}r_{2i}\bar{x_2}^{2l-2i}\bar{x_3}^{2i}$. Since
$$\begin{cases}
\overline{\partial_{\mathcal{A}}[\frac{m_{33}}{m_{22}m_{33}-m_{23}m_{32}}x_2-\frac{m_{23}}{m_{22}m_{33}-m_{23}m_{32}}x_3]}=\bar{x_2}^2\\
\overline{\partial_{\mathcal{A}}[\frac{-m_{32}}{m_{22}m_{33}-m_{23}m_{32}}x_2+\frac{m_{22}}{m_{22}m_{33}-m_{23}m_{32}}x_3]}=\bar{x_3}^2,
\end{cases}
$$
we have
\begin{align*}
&\quad h(\bar{x_2},\bar{x_3})=\sum\limits_{i=0}^{l}r_{2i}\bar{x_2}^{2l-2i}\bar{x_3}^{2i}\\
&=\overline{ \partial_{\mathcal{A}}[\sum\limits_{i=0}^{l-1}r_{2i}(\frac{m_{33}x_2}{m_{22}m_{33}-m_{23}m_{32}}-\frac{m_{23}x_3}{m_{22}m_{33}-m_{23}m_{32}})x_2^{2l-2i-2}x_3^{2i}]}\\
&+ \overline{\partial_{\mathcal{A}}[r_{2l}(\frac{-m_{32}x_2}{m_{22}m_{33}-m_{23}m_{32}}+\frac{m_{22}x_3}{m_{22}m_{33}-m_{23}m_{32}})x_3^{2l-2}].                                                                              }
\end{align*}
Take
\begin{align*}
v(x_2,x_3)&=\sum\limits_{i=0}^{l-1}r_{2i}(\frac{m_{33}x_2}{m_{22}m_{33}-m_{23}m_{32}}-\frac{m_{23}x_3}{m_{22}m_{33}-m_{23}m_{32}})x_2^{2l-2i-2}x_3^{2i}\\
      &+r_{2l}(\frac{-m_{32}x_2}{m_{22}m_{33}-m_{23}m_{32}}+\frac{m_{22}x_3}{m_{22}m_{33}-m_{23}m_{32}})x_3^{2l-2}.
\end{align*}
Then we are done.
\end{proof}
\begin{rem}\label{oddcase}
Since $x_2^2$ and $x_3^2$ are cocycle elements in $\mathcal{A}$, one sees that $u(x_2,x_3)$ in Lemma \ref{ithree} can be chosen as
$
u(x_2,x_3)=\sum\limits_{i=1}^{l}s_{2i-1}x_2^{2l-2i+1}x_3^{2i-1}
$ with $s_{2i-1}\in k$, $1\le i\le l$.
\end{rem}

\begin{lem}\label{twohg}
Let $M=(m_{ij})_{3\times 3}$ be a matrix in $\mathrm{GL}_3(k)$ with $m_{22}m_{33}-m_{23}m_{32}\neq 0$ and $m_{33}\neq 0$.
Assume that  $I_1=(x_1^2),I_2=(x_1^2,x_2^2)$ and $I_3=(x_1^2,x_2^2,x_3^2)$ are the three DG ideals generated by the subsets $\{x_1^2\}, \{x_1^2,x_2^2\}$ and $\{x_1^2,x_2^2,x_3^2\}$ of the DG algebra $\mathcal{A}$, respectively. Then
                                                                       $$H^i(I_2/I_1)=\begin{cases}
 k\lceil \bar{x_2}^2\rceil,\,\, \text{if}\,\, i=2\\
 k\lceil \bar{x_1}\bar{x_2}^2+\bar{x_2}^2(\frac{m_{13}m_{32}-m_{12}m_{33}}{m_{22}m_{33}-m_{23}m_{32}}\bar{x_2}+\frac{m_{12}m_{23}-m_{13}m_{22}}{m_{22}m_{33}-m_{23}m_{32}}\bar{x_3})\rceil, \,\,\text{if}\,\,i=3 \\
 0,\,\, \text{if}\,\, i\ge 4
 \end{cases}$$
 and
 $$
H^i(I_3/I_2)=\begin{cases}
 k\lceil \bar{x_3}^2\rceil,\,\, \text{if}\,\, i=2\\
 k\lceil -m_{33}\bar{x_1}\bar{x_3}^2+m_{13}\bar{x_3}^3 \rceil \oplus k\lceil -m_{33}\bar{x_2}\bar{x_3}^2+m_{23}\bar{x_3}^3\rceil,\,\, \text{if}\,\, i=3\\
 k\lceil m_{23}\bar{x_1}\bar{x_3}^3-m_{13}\bar{x_2}\bar{x_3}^3-m_{33}\bar{x_1}\bar{x_2}\bar{x_3}^2\rceil,  \,\,\text{if}\,\, i=4 \\
 0,\,\,\text{if}\,\, i\ge 5.
\end{cases}$$
\end{lem}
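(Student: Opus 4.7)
The plan is to reduce each computation to $H(\mathcal{A}/I_1)$ or $H(\mathcal{A}/I_2)$ via multiplication by the relevant central cocycle, then to compute the latter by filtering the ambient quotient. By Lemma \ref{centcocy}, both $\bar{x_2}^2$ and $\bar{x_3}^2$ are central cocycles (of even degree), so left multiplication by them defines a chain map of cohomological degree $2$. On the monomial basis of $\mathcal{A}/I_1$ (respectively $\mathcal{A}/I_2$) this map is clearly injective and its image is exactly $I_2/I_1$ (respectively $I_3/I_2$), so I obtain DG-module isomorphisms giving $H^i(I_2/I_1)\cong H^{i-2}(\mathcal{A}/I_1)$ and $H^i(I_3/I_2)\cong H^{i-2}(\mathcal{A}/I_2)$ for every $i$. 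It therefore suffices to compute the right-hand sides and track how the generators transport under multiplication by $\bar{x_2}^2$ and $\bar{x_3}^2$.

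To handle $H(\mathcal{A}/I_1)$, I would filter by $\bar{x_1}$-degree: let $B\subset \mathcal{A}/I_1$ denote the sub-DG-algebra generated by $\bar{x_2},\bar{x_3}$. This yields a short exact sequence of DG-modules $0\to B\to \mathcal{A}/I_1\to \bar{x_1}B\to 0$, where the quotient $\bar{x_1}B$ is a degree-$1$-shifted copy of $B$ (with differential $\bar{x_1}b\mapsto -\bar{x_1}\partial(b)$ on representatives, since $\partial(\bar{x_1})b\in B$ dies in the quotient). Because $m_{22}m_{33}-m_{23}m_{32}\neq 0$, a direct computation gives $H^0(B)=k$ and $H^1(B)=H^2(B)=0$; vanishing for all $i\ge 3$ is exactly the content of Lemma \ref{ithree}. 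The long exact sequence then collapses to $H^0(\mathcal{A}/I_1)=k$, $\dim_k H^1(\mathcal{A}/I_1)=1$, and $H^i(\mathcal{A}/I_1)=0$ for $i\ge 2$. Solving the cocycle system $am_{12}+bm_{22}+cm_{32}=0$, $am_{13}+bm_{23}+cm_{33}=0$ by Cramer's rule identifies the $H^1$-representative explicitly, and multiplying by $\bar{x_2}^2$ (which is central) yields the generator of $H^3(I_2/I_1)$ in precisely the stated form; the vanishing statement $H^i(I_2/I_1)=0$ for $i\ge 4$ is immediate.

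For $H(\mathcal{A}/I_2)$ I would use an analogous two-step filtration based on $C=k[\bar{x_3}]$: set $F_0=C$, $F_1=C\oplus \bar{x_1}C\oplus \bar{x_2}C$, and $F_2=\mathcal{A}/I_2$. A short check using $\partial(\bar{x_1})=m_{13}\bar{x_3}^2$, $\partial(\bar{x_2})=m_{23}\bar{x_3}^2$, and $\partial(\bar{x_1}\bar{x_2})=m_{13}\bar{x_2}\bar{x_3}^2-m_{23}\bar{x_1}\bar{x_3}^2\in F_1$ shows each $F_p$ is a sub-DG-module, and the associated graded pieces are $C$, two degree-$1$-shifted copies of $C$, and a single degree-$2$-shifted copy. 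Since $m_{33}\neq 0$, $C$ is acyclic in positive degrees with $H^0(C)=k$, so two applications of the long exact sequence give $H^0(\mathcal{A}/I_2)=k$, $H^1(\mathcal{A}/I_2)\cong k^2$, $H^2(\mathcal{A}/I_2)\cong k$, and $H^i(\mathcal{A}/I_2)=0$ for $i\ge 3$. Solving the cocycle equations in degrees $1$ and $2$ explicitly, modulo the unique nontrivial boundary $\partial(\bar{x_3})=m_{33}\bar{x_3}^2$, and then multiplying by $\bar{x_3}^2$, produces the three classes listed for $H^i(I_3/I_2)$ at $i=2,3,4$ and leaves everything else zero.

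The hard part will be purely bookkeeping rather than conceptual: the dimension counts and vanishing in high degrees follow immediately from Lemma \ref{ithree} and the two filtrations, but matching the Cramer-type solutions to the exact signs, scalar normalizations, and orderings in the stated generators (such as the asymmetric coefficients $\tfrac{m_{13}m_{32}-m_{12}m_{33}}{m_{22}m_{33}-m_{23}m_{32}}$ and $\tfrac{m_{12}m_{23}-m_{13}m_{22}}{m_{22}m_{33}-m_{23}m_{32}}$ appearing in $H^3(I_2/I_1)$) will require careful term-by-term tracking.
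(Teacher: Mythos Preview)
Your approach is correct and genuinely different from the paper's. The key structural observation you make---that multiplication by the central cocycle $\bar{x_2}^2$ (respectively $\bar{x_3}^2$) gives a degree-$2$ DG-module isomorphism $\mathcal{A}/I_1\to I_2/I_1$ (respectively $\mathcal{A}/I_2\to I_3/I_2$)---is not used in the paper; instead, the paper works directly inside $I_2/I_1$ and $I_3/I_2$, writing an arbitrary cocycle $\Omega=\bar{x_1}\bar{x_2}^2f(\bar{x_2},\bar{x_3})+\bar{x_2}^2g(\bar{x_2},\bar{x_3})$ (or the analogous four-term expression in $I_3/I_2$), deriving the resulting systems of equations, and then analyzing them degree by degree ($d=3$, $d=4$, $d=2l+3$, $d=2l+4$ separately), invoking Lemma~\ref{ithree} for the high-degree vanishing. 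Your filtration argument on $\mathcal{A}/I_1$ and $\mathcal{A}/I_2$ packages the same information much more efficiently: once you know $H(B)=k$ (which is exactly Lemma~\ref{ithree} plus a trivial low-degree check) and $H(C)=k$ (immediate from $m_{33}\neq 0$), the long exact sequences collapse and the dimension counts fall out in one line, with the explicit generators recovered by a single Cramer computation rather than repeated ad hoc solving. The paper's direct approach has the minor advantage of never needing to verify that the connecting homomorphisms vanish (your argument does need this, though it is easy since the targets $H^2(B)$ and $H^2(C)$ are already zero), but your route is cleaner, shorter, and makes the underlying reason for the result more transparent.
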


\begin{proof}
By Lemma \ref{centcocy}, each $x_i^2$ is a central cocycle element of $\mathcal{A}$.  So $I_1,I_2$ and $I_3$ are indeed DG ideals of $\mathcal{A}$. Then $H^2(I_2/I_1)=k\lceil x_2^2\rceil $ and $H^2(I_3/I_2)=k\lceil x_3^2\rceil$ since $I_2/I_1$ and $I_3/I_2$ are concentrated in degrees $\ge 2$, $(I_2/I_1)^2=kx_2^2$ and $(I_3/I_2)^2=kx_3^2$.

Any graded cocycle element $\Omega$ of degree $d$ in $I_2/I_1$ can be written as $$\Omega=\bar{x_1}\bar{x_2}^2f(\bar{x_2},\bar{x_3})+\bar{x_2}^2g(\bar{x_2},\bar{x_3}),$$ where $f(\bar{x_2},\bar{x_3})$ and $g(\bar{x_2},\bar{x_3})$ are sums of monomials in variables $\bar{x_2}$ and $\bar{x_3}$. We have
\begin{align*}
                 0=&\partial_{I_2/I_1}(z)\\
=&(m_{12}\bar{x_2}^2+m_{13}\bar{x_3}^2)\bar{x_2}^2f(\bar{x_2},\bar{x_3})-\bar{x_1}\bar{x_2}^2\overline{\partial_{\mathcal{A}}[f(x_2,x_3)]}+\bar{x_2}^2\overline{\partial_{\mathcal{A}}[g(x_2,x_3)]}\\
=&\bar{x_2}^2\{(m_{12}\bar{x_2}^2+m_{13}\bar{x_3}^2)f(\bar{x_2},\bar{x_3})+\overline{\partial_{\mathcal{A}}[g(x_2,x_3)]}\}-\bar{x_1}\bar{x_2}^2\overline{\partial_{\mathcal{A}}[f(x_2,x_3)]}.
\end{align*}
Thus \begin{align}\label{eqone}
\begin{cases}
\overline{\partial_{\mathcal{A}}[f(x_2,x_3)]}=0 \\ \overline{\partial_{\mathcal{A}}[g(x_2,x_3)]}=-(m_{12}\bar{x_2}^2+m_{13}\bar{x_3}^2)f(\bar{x_2},\bar{x_3}).
\end{cases}
\end{align}

When $d=3$, we have $|f(\overline{x_2},\overline{x_3})|=0$ and $|g(\overline{x_2},\overline{x_3})|=1$. Let $f(\overline{x_2},\overline{x_3})=c\in k$ and $g(\overline{x_2},\overline{x_3})=c_1\overline{x_2} +c_2\overline{x_3}$. Then
\begin{align*}
-(m_{12}\bar{x_2}^2+m_{13}\bar{x_3}^2)&c=\overline{\partial_{\mathcal{A}}[g(x_2,x_3)]}\\
                                       &=\overline{\partial_{\mathcal{A}}(c_1x_2+c_2x_3)}\\
                                       &=\overline{c_1(m_{21}x_1^2+m_{22}x_2^2+m_{23}x_3^2)+c_2(m_{31}x_1^2+m_{32}x_2^2+m_{33}x_3^2)}\\                                    &=(c_1m_{22}+c_2m_{32})\bar{x_2}^2+(c_1m_{23}+c_2m_{33})\bar{x_3}^2.
\end{align*}
This implies that
\begin{align*}
\begin{cases}
c_1m_{22}+c_2m_{32}=-cm_{12}\\
c_1m_{23}+c_2m_{33}=-cm_{13}.
\end{cases}
\end{align*}
And hence \begin{align*}
\begin{cases}
c_1=\frac{\left|
                       \begin{array}{cc}
                         -m_{12} & m_{32} \\
                         -m_{13} & m_{33} \\
                       \end{array}
                     \right|}{\left|
                                \begin{array}{cc}
                                  m_{22} & m_{32} \\
                                  m_{23} & m_{33} \\
                                \end{array}
                              \right|}c=\frac{c(m_{13}m_{32}-m_{22}m_{33})}{m_{22}m_{33}-m_{23}m_{32}}\\
c_2==\frac{\left|
                       \begin{array}{cc}
                         m_{22} & -m_{12} \\
                         m_{23} & -m_{13} \\
                       \end{array}
                     \right|}{\left|
                                \begin{array}{cc}
                                  m_{22} & m_{32} \\
                                  m_{23} & m_{33} \\
                                \end{array}
                              \right|}c=\frac{c(m_{12}m_{23}-m_{13}m_{22})}{m_{22}m_{33}-m_{23}m_{32}}
\end{cases}
\end{align*}
Then $$\Omega=\bar{x_1}\bar{x_2}^2c+\bar{x_2}^2[\frac{c(m_{13}m_{32}-m_{22}m_{33})}{m_{22}m_{33}-m_{23}m_{32}}\bar{x_2}+\frac{c(m_{12}m_{23}-m_{13}m_{22})}{m_{22}m_{33}-m_{23}m_{32}}\bar{x_3}]$$
and $$H^3(I_2/I_1)=k\lceil \bar{x_1}\bar{x_2}^2+\bar{x_2}^2(\frac{m_{13}m_{32}-m_{22}m_{33}}{m_{22}m_{33}-m_{23}m_{32}}\bar{x_2}+\frac{m_{12}m_{23}-m_{13}m_{22}}{m_{22}m_{33}-m_{23}m_{32}}\bar{x_3})\rceil$$
since $B^3(I_2/I_1)=0$.

When $d=4$, we have $|f(\bar{x_2},\bar{x_3})|=1$ and $|g(\bar{x_2},\bar{x_3})|=2$. Let $f(\bar{x_2},\bar{x_3})=l_1\bar{x_2}+l_2\bar{x_3}$ and $g(\bar{x_2},\bar{x_3})=t_1\bar{x_2}^2+t_2\bar{x_2}\bar{x_3}+t_3\bar{x_3}^2$. Then by (\ref{eqone}), we have
\begin{align*}
0=&\overline{\partial_{\mathcal{A}}[f(x_2,x_3)]}\\
 =&\overline{\partial_{\mathcal{A}}(l_1x_2+l_2x_3)}\\
 =&\overline{l_1(m_{21}x_1^2+m_{22}x_2^2+m_{23}x_3^2)+l_2(m_{31}x_1^2+m_{32}x_2^2+m_{33}x_3^2)}\\
 =&(l_1m_{22}+l_2m_{32})\bar{x_2}^2+(l_1m_{23}+l_2m_{33})\bar{x_3}^2,\\
\end{align*}
which implies that
\begin{align*}
\begin{cases}
l_1m_{22}+l_2m_{32}=0\\
l_1m_{23}+l_2m_{33}=0.
\end{cases}
\end{align*}
Since $m_{22}m_{33}-m_{23}m_{32}\neq 0$, we get $l_1=l_2=0$ and hence $f(\bar{x_2},\bar{x_3})=0$. Then by (\ref{eqone}), we have
\begin{align*}
0=&\overline{\partial_{\mathcal{A}}[g(x_2,x_3)]}\\
 =&\overline{\partial_{\mathcal{A}}[t_1x_2^2+t_2x_2x_3+t_3x_3^2]}\\
 =&\overline{t_2(m_{21}x_1^2+m_{22}x_2^2+m_{23}x_3^2)x_3-t_2x_2(m_{31}x_1^2+m_{32}x_2^2+m_{33}x_3^2)}\\
 =&t_2m_{22}\bar{x_2}^2\bar{x_3}+t_2m_{23}\bar{x_3}^3-t_2m_{32}\bar{x_2}^3-t_2m_{33}\bar{x_2}\bar{x_3}^2.
\end{align*}
Thus $t_2m_{22}=t_2m_{23}=t_2m_{32}=t_2m_{33}=0$. Since $\left|
                                                           \begin{array}{cc}
                                                             m_{22} & m_{23} \\
                                                             m_{32} & m_{33} \\
                                                           \end{array}
                                                         \right|\neq 0$, we get $t_2=0$.
                                                         So                                                $\Omega=\bar{x_1}\bar{x_2}^2f(\bar{x_2},\bar{x_3})+\bar{x_2}^2g(\bar{x_2},\bar{x_3})=\bar{x_2}^2(t_1\bar{x_2}^2+t_3\bar{x_3}^2)$. By the proof of Lemma \ref{boundary}, there exist
                                                         $$
\begin{cases}
                                                                 b_1=\frac{m_{23}m_{31}-m_{21}m_{33}}{|M|} \\
                                                                 b_2=\frac{m_{11}m_{33}-m_{13}m_{31}}{|M|} \\
                                                                 b_3=\frac{m_{13}m_{21}-m_{11}m_{23}}{|M|}
\end{cases}\quad \text{and}\quad \begin{cases}
                                                                 c_1=\frac{m_{21}m_{32}-m_{22}m_{31}}{|M|} \\
                                                                 c_2=\frac{m_{12}m_{31}-m_{11}m_{32}}{|M|} \\
                                                                 c_3=\frac{m_{11}m_{22}-m_{12}m_{21}}{|M|}
\end{cases}
$$
such that $\partial_{\mathcal{A}}(b_1x_1+b_2x_2+b_3x_3)=x_2^2$ and $\partial_{\mathcal{A}}(c_1x_1+c_2x_2+c_3x_3)=x_3^2$, respectively.
Then \begin{align*}
z&=\bar{x_2}^2(t_1\bar{x_2}^2+t_3\bar{x_3}^2)\\
 &=\bar{x_2}^2[t_1\overline{\partial_{\mathcal{A}}(b_1x_1+b_2x_2+b_3x_3)}+t_3\overline{\partial_{\mathcal{A}}(c_1x_1+c_2x_2+c_3x_3)}]\\
 &=\partial_{I_2/I_1}\{\bar{x_2}^2[t_1(b_1\bar{x_1}+b_2\bar{x_2}+b_3\bar{x_3})+t_3(c_1\bar{x_1}+c_2\bar{x_2}+c_3\bar{x_3})]\}.
\end{align*}
Hence $H^4(I_2/I_1)=0$.

When $d=2l+3,l\ge 1$, we have $|f(\bar{x_2},\bar{x_3})|=2l$ and $|g(\bar{x_2},\bar{x_3})|=2l+1$. Since $\overline{\partial_{\mathcal{A}}[f(x_2,x_3)]}=0$ by (\ref{eqone}), we get
$f(\bar{x_2},\bar{x_3})=\sum\limits_{i=0}^{l}r_{2i}\bar{x_2}^{2l-2i}\bar{x_3}^{2i}$ by Lemma \ref{ithree}, where $r_{2i}\in k, 0\le i\le l$.   Then by (\ref{eqone}), we have
\begin{align*}
& \overline{\partial_{\mathcal{A}}[g(x_2,x_3)]}=-(m_{12}\bar{x_2}^2+m_{13}\bar{x_3}^2)f(\bar{x_2},\bar{x_3})\\
                                             &=-(m_{12}\bar{x_2}^2+m_{13}\bar{x_3}^2)(\sum\limits_{i=0}^{l}r_{2i}\bar{x_2}^{2l-2i}\bar{x_3}^{2i})\\
&=\overline{\partial_{\mathcal{A}}[(\frac{-m_{12}m_{33}}{m_{22}m_{33}-m_{23}m_{32}}x_2+\frac{m_{12}m_{23}}{m_{22}m_{33}-m_{23}m_{32}}x_3)(\sum\limits_{i=0}^{l}r_{2i}\bar{x_2}^{2l-2i}\bar{x_3}^{2i})] }\\
&+\overline{\partial_{\mathcal{A}}[(\frac{m_{13}m_{32}}{m_{22}m_{33}-m_{23}m_{32}}x_2-\frac{m_{13}m_{22}}{m_{22}m_{33}-m_{23}m_{32}}x_3)(\sum\limits_{i=0}^{l}r_{2i}\bar{x_2}^{2l-2i}\bar{x_3}^{2i})] }\\
&=\overline{\partial_{\mathcal{A}}\{\sum\limits_{i=0}^{l}r_{2i}[\frac{m_{13}m_{32}-m_{12}m_{33}}{m_{22}m_{33}-m_{23}m_{32}}x_2^{2l-2i+1}x_3^{2i}+\frac{m_{12}m_{23}-m_{13}m_{22}}{m_{22}m_{33}-m_{23}m_{32}}x_2^{2l-2i}x_3^{2i+1}]\}}
\end{align*}
Then, by Lemma \ref{ithree}, we may let
\begin{align*}
&\quad g(\bar{x_2},\bar{x_3})\\
&=\sum\limits_{i=0}^{l}r_{2i}[\frac{m_{13}m_{32}-m_{12}m_{33}}{m_{22}m_{33}-m_{23}m_{32}}\bar{x_2}^{2l-2i+1}\bar{x_3}^{2i}+\frac{m_{12}m_{23}-m_{13}m_{22}}{m_{22}m_{33}-m_{23}m_{32}}\bar{x_2}^{2l-2i}\bar{x_3}^{2i+1}]\\
&+\overline{\partial_{\mathcal{A}}[u(x_2,x_3)]},
\end{align*}
where $u(x_2,x_3)$ is a sum of monomials  in variables $x_2$ and $x_3$. Then
\begin{align*}
\Omega &=\bar{x_1}\bar{x_2}^2f(\bar{x_2},\bar{x_3})+\bar{x_2}^2g(\bar{x_2},\bar{x_3})\\
&=\sum\limits_{i=0}^{l}r_{2i}\bar{x_1}\bar{x_2}^{2l-2i+2}\bar{x_3}^{2i}+\bar{x_2}^2\overline{\partial_{\mathcal{A}}[u(x_2,x_3)]}\\
&+\sum\limits_{i=0}^{l}r_{2i}[\frac{m_{13}m_{32}-m_{12}m_{33}}{m_{22}m_{33}-m_{23}m_{32}}\bar{x_2}^{2l-2i+3}\bar{x_3}^{2i}+\frac{m_{12}m_{23}-m_{13}m_{22}}{m_{22}m_{33}-m_{23}m_{32}}\bar{x_2}^{2l-2i+2}\bar{x_3}^{2i+1}]\\
&=\sum\limits_{i=0}^{l}r_{2i}[\bar{x_1}+\frac{(m_{13}m_{32}-m_{12}m_{33})\bar{x_2}+(m_{12}m_{23}-m_{13}m_{22})\bar{x_3}}{m_{22}m_{33}-m_{23}m_{32}}]\bar{x_2}^{2l-2i+2}\bar{x_3}^{2i}\\
&+\bar{x_2}^2\overline{\partial_{\mathcal{A}}[u(x_2,x_3)]}.\\
\end{align*}
 One sees that $\omega=x_1+\frac{(m_{13}m_{32}-m_{12}m_{33})x_2+(m_{12}m_{23}-m_{13}m_{22})x_3}{m_{22}m_{33}-m_{23}m_{32}}$ is a cocycle element in $\mathcal{A}$. Hence
 \begin{align*}
 z&=\overline{\partial_{\mathcal{A}}[-\sum\limits_{i=0}^{l-1}r_{2i}\omega(b_1x_1+b_2x_2+b_3x_3)x_2^{2l-2i}x_3^{2i}-r_{2l}\omega x_2^2x_3^{2l-2}(c_1x_1+c_2x_2+c_3x_3)]}\\
 &+\bar{x_2}^2\overline{\partial_{\mathcal{A}}[u(x_2,x_3)]}\\
 &=\partial_{I_2/I_1}\{[-\sum\limits_{i=0}^{l-1}r_{2i}\omega(b_1\bar{x_1}+b_2\bar{x_2}+b_3\bar{x_3})\bar{x_2}^{2l-2i-2}\bar{x_3}^{2i}]\bar{x_2}^2\}\\
 &+\partial_{I_2/I_1}\{[-r_{2l}\omega(c_1\bar{x_1}+c_2\bar{x_2}+c_3\bar{x_3})\bar{x_3}^{2l-2}+u(\bar{x_2},\bar{x_3})]\bar{x_2}^2\}.
 \end{align*}
Thus $H^{2l+3}(I_2/I_1)=0$.

When $d=2l+4$, we have $|f(\bar{x_2},\bar{x_3})|=2l+1$ and $|g(\bar{x_2},\bar{x_3})|=2l+2$. Since $\overline{\partial_{\mathcal{A}}[f(x_2,x_3)]}=0$ by (\ref{eqone}), we have $$f(\bar{x_2},\bar{x_3})=\overline{\partial_{\mathcal{A}}[\sum\limits_{i=1}^{l}s_{2i-1}x_2^{2l-2i+1}x_3^{2i-1}]}$$ by Lemma \ref{ithree} and Remark \ref{oddcase}, where $s_{2i-1}\in k$, $1\le i\le l$.  Then by (\ref{eqone}), we have
\begin{align*}
\overline{\partial_{\mathcal{A}}[g(x_2,x_3)]} &=-(m_{12}\bar{x_2}^2+m_{13}\bar{x_3}^2)f(\bar{x_2},\bar{x_3})\\                                           &=-(m_{12}\bar{x_2}^2+m_{13}\bar{x_3}^2)\overline{\partial_{\mathcal{A}}[\sum\limits_{i=1}^{l}s_{2i-1}x_2^{2l-2i+1}x_3^{2i-1}]}.
\end{align*}
Then, by Lemma \ref{ithree}, we may let
\begin{align*}
 g(\bar{x_2},\bar{x_3})=-(m_{12}\bar{x_2}^2+m_{13}\bar{x_3}^2)[\sum\limits_{i=1}^{l}s_{2i-1}\bar{x_2}^{2l-2i+1}\bar{x_3}^{2i-1}]+\overline{\partial_{\mathcal{A}}[v(x_2,x_3)]}.
\end{align*}
where $v(x_2,x_3)$ is a sum of monomials  in variables $x_2$ and $x_3$. Then
\begin{align*}
\Omega &=\bar{x_1}\bar{x_2}^2f(\bar{x_2},\bar{x_3})+\bar{x_2}^2g(\bar{x_2},\bar{x_3})\\
 &=\bar{x_1}\bar{x_2}^2\overline{\partial_{\mathcal{A}}[\sum\limits_{i=1}^{l}s_{2i-1}x_2^{2l-2i+1}x_3^{2i-1}]}-(m_{12}\bar{x_2}^2+m_{13}\bar{x_3}^2)[\sum\limits_{i=1}^{l}s_{2i-1}\bar{x_2}^{2l-2i+3}\bar{x_3}^{2i-1}]\\
 &+\bar{x_2}^2\overline{\partial_{\mathcal{A}}[v(x_2,x_3)]}\\
 &=-\overline{\partial_{\mathcal{A}}[x_1\sum\limits_{i=1}^{l}s_{2i-1}x_2^{2l-2i+1}x_3^{2i-1}-v(x_2,x_3)]}\bar{x_2}^2\\
 &=\partial_{I_2/I_1}[(v(\bar{x_2},\bar{x_3})-\bar{x_1}\sum\limits_{i=1}^{l}s_{2i-1}\bar{x_2}^{2l-2i+1}\bar{x_3}^{2i-1})\bar{x_2}^2]
\end{align*}
and hence $H^{2l+4}(I_2/I_1)=0$.

Since $(I_3/I_2)^3=k\bar{x_1}\bar{x_3}^2\oplus k\bar{x_2}\bar{x_3}^2\oplus k\bar{x_3}^3$, any cocycle element in $(I_3/I_2)^3$ can be denoted by $c_1\bar{x_1}\bar{x_3}^2+c_2\bar{x_2}\bar{x_3}^2+c_3\bar{x_3}^3$ where $c_1,c_2,c_3\in k$. Then
\begin{align*}
0=&\partial_{I_3/I_2}[c_1\bar{x_1}\bar{x_3}^2+c_2\bar{x_2}\bar{x_3}^2+c_3\bar{x_3}^3]\\
=&c_1m_{13}\bar{x_3}^4+c_2m_{23}\bar{x_3}^4+c_3m_{33}\bar{x_3}^4\\
=&(c_1m_{13}+c_2m_{23}+c_3m_{33})\bar{x_3}^4
\end{align*}
and hence $c_1m_{13}+c_2m_{23}+c_3m_{33}=0$, which has a basic solution system
$$\left(
    \begin{array}{c}
      -m_{33} \\
      0 \\
      m_{13} \\
    \end{array}
  \right),\left(
            \begin{array}{c}
              0 \\
              -m_{33} \\
              m_{23} \\
            \end{array}
          \right)
  $$ So $Z^3(I_3/I_2)=k(-m_{33}\bar{x_1}\bar{x_3}^2+m_{13}\bar{x_3}^3)\oplus k(-m_{33}\bar{x_2}\bar{x_3}^2+m_{23}\bar{x_3}^3)$. Then $$H^3(I_3/I_2)=k\lceil -m_{33}\bar{x_1}\bar{x_3}^2+m_{13}\bar{x_3}^3\rceil\oplus k\lceil -m_{33}\bar{x_2}\bar{x_3}^2+m_{23}\bar{x_3}^3 \rceil$$ since one sees easily that $B^3(I_3/I_2)=0$. Any graded cocycle element $z$ of degree $d, d\ge 4$ in $I_3/I_2$ can be written as $$\chi=\bar{x_1}\bar{x_3}^2\phi(\bar{x_3})+\bar{x_2}\bar{x_3}^2\varphi(\bar{x_3})+\bar{x_1}\bar{x_2}\bar{x_3}^2\psi(\bar{x_3})+\bar{x_3}^2\lambda(\bar{x_3}).$$
We have
\begin{align*}
0=&\partial_{I_3/I_2}(\chi)=\partial_{I_3/I_2}[\bar{x_1}\bar{x_3}^2\phi(\bar{x_3})+\bar{x_2}\bar{x_3}^2\varphi(\bar{x_3})+\bar{x_1}\bar{x_2}\bar{x_3}^2\psi(\bar{x_3})+\bar{x_3}^2\lambda(\bar{x_3})]\\
 =&m_{13}\bar{x_3}^4\phi(\bar{x_3})-\bar{x_1}\bar{x_3}^2\overline{\partial_{\mathcal{A}}[\phi(x_3)]}+m_{23}\bar{x_3}^4\varphi(\bar{x_3})-\bar{x_2}\bar{x_3}^2\overline{\partial_{\mathcal{A}}[\varphi(x_3)]}\\
 &+m_{13}\bar{x_2}\bar{x_3}^4\psi(\bar{x_3})-m_{23}\bar{x_1}\bar{x_3}^4\psi(\bar{x_3})+\bar{x_1}\bar{x_2}\bar{x_3}^2\overline{\partial_{\mathcal{A}}[\psi(\bar{x_3})]}+\bar{x_3}^2\overline{\partial_{\mathcal{A}}[\lambda(x_3)]}\\
 =&\bar{x_3}^2[m_{13}\bar{x_3}^2\phi(\bar{x_3})+m_{23}\bar{x_3}^2\varphi(\bar{x_3})+\overline{\partial_{\mathcal{A}}[\lambda(x_3)]}]+\bar{x_1}\bar{x_2}\bar{x_3}^2\overline{\partial_{\mathcal{A}}[\psi(\bar{x_3})]}\\
 &-\bar{x_1}[\bar{x_3}^2\overline{\partial_{\mathcal{A}}[\phi(x_3)]}+m_{23}\bar{x_3}^4\psi(\bar{x_3})]+\bar{x_2}[m_{13}\bar{x_3}^4\psi(\bar{x_3})-\bar{x_3}^2\overline{\partial_{\mathcal{A}}[\varphi(x_3)]}].
\end{align*}
Hence
\begin{align}\label{ieqtwo}
\begin{cases}
m_{13}\bar{x_3}^2\phi(\bar{x_3})+m_{23}\bar{x_3}^2\varphi(\bar{x_3})+\overline{\partial_{\mathcal{A}}[\lambda(x_3)]}=0\\
\bar{x_3}^2\overline{\partial_{\mathcal{A}}[\phi(x_3)]}+m_{23}\bar{x_3}^4\psi(\bar{x_3})=0\\
m_{13}\bar{x_3}^4\psi(\bar{x_3})-\bar{x_3}^2\overline{\partial_{\mathcal{A}}[\varphi(x_3)]}=0\\
\overline{\partial_{\mathcal{A}}[\psi(\bar{x_3})]}=0.
\end{cases}
\end{align}
When $d=4$, we have $|\psi(\bar{x_3})|=0$, $|\varphi(\bar{x_3})|=|\phi(\bar{x_3})|=1$ and $|\lambda(\bar{x_3})|=2$. Let $\psi(\bar{x_3})=c\in k$. Then by (\ref{ieqtwo}), we get $\varphi(x_3)=\frac{cm_{13}}{m_{33}}x_3, \phi(x_3)=\frac{-cm_{23}}{m_{33}}x_3$ and $\lambda(\bar{x_3})=c'\bar{x_3}^2$, for some $c'\in k$.  So $$Z^4(I_3/I_2)=k(-m_{23}\bar{x_1}\bar{x_3}^3+m_{13}\bar{x_2}\bar{x_3}^3+m_{33}\bar{x_1}\bar{x_2}\bar{x_2}^2)\oplus k\bar{x_3}^4.$$
Then $H^4(I_3/I_2)=k\lceil m_{23}\bar{x_1}\bar{x_3}^3-m_{13}\bar{x_2}\bar{x_3}^3-m_{33}\bar{x_1}\bar{x_2}\bar{x_2}^2\rceil$
since $B^4(I_3/I_2)=k\bar{x_3}^4$.
When $d=2l-1\ge 5$, we have $|\phi(\bar{x_3})|=2l-4$, $|\varphi(\bar{x_3})|=2l-4$, $|\psi(\bar{x_3})|=2l-5$ and $|\lambda(\bar{x_3})|=2l-3$. Let $\psi(\bar{x_3})=q\bar{x_3}^{2l-5}$ for some $q\in k$. Then
  $0=\overline{\partial_{\mathcal{A}}[\psi(\bar{x_3})]}=qm_{33}\bar{x_3}^{2l-4}$ by (\ref{ieqtwo}). So $q=0$ and $\psi(\bar{x_3})=0$. Then we get
  $\overline{\partial_{\mathcal{A}}[\phi(x_3)]}=\overline{\partial_{\mathcal{A}}[\varphi(x_3)]}=0$ by (\ref{ieqtwo}).
Let $\phi(x_3)=px_3^{2l-4}$ and $\varphi(x_3)=rx_3^{2l-4}$, $p,r\in k$. Then $$\overline{\partial_{\mathcal{A}}[\lambda(x_3)]}=-m_{13}p\bar{x_3}^{2l-2}-m_{23}r\bar{x_3}^{2l-2}.$$
So $\lambda(\bar{x_3})=\frac{-(m_{13}p+m_{23}r)\bar{x_3}^{2l-3}}{m_{33}}$.
Then
\begin{align*}
\chi &=\bar{x_1}\bar{x_3}^2\phi(\bar{x_3})+\bar{x_2}\bar{x_3}^2\varphi(\bar{x_3})+\bar{x_1}\bar{x_2}\bar{x_3}^2\psi(\bar{x_3})+\bar{x_3}^2\lambda(\bar{x_3})\\
&=p\bar{x_1}\bar{x_3}^{2l-2}+r\bar{x_2}\bar{x_3}^{2l-2}-\frac{(m_{13}p+m_{23}r)\bar{x_3}^{2l-1}}{m_{33}}\\
&=\frac{[m_{33}(p\bar{x_1}+r\bar{x_2})-(pm_{13}+rm_{23})\bar{x_3}]}{m_{33}}\bar{x_3}^{2l-2}\\
&=\partial_{I_3/I_2}\{\frac{[-m_{33}(p\bar{x_1}+r\bar{x_2})+(pm_{13}+rm_{23})\bar{x_3}]}{m_{33}^2}\bar{x_3}^{2l-3}\}.
\end{align*}
Thus $H^{2l-1}(I_3/I_2)=0$, for any $l\ge 3$.
When $d=2l\ge 6$, we have $|\phi(\bar{x_3})|=2l-3$, $|\varphi(\bar{x_3})|=2l-3$, $|\psi(\bar{x_3})|=2l-4$ and $|\lambda(\bar{x_3})|=2l-2$.
So $\overline{\partial_{\mathcal{A}}[\lambda(x_3)]}=0$ and $\overline{\partial_{\mathcal{A}}[\psi(x_3)]}=0$. Then (\ref{ieqtwo}) is equivalent to
\begin{align*}
\begin{cases}
m_{13}\bar{x_3}^2\phi(\bar{x_3})+m_{23}\bar{x_3}^2\varphi(\bar{x_3})=0\\
\bar{x_3}^2\overline{\partial_{\mathcal{A}}[\phi(x_3)]}+m_{23}\bar{x_3}^4\psi(\bar{x_3})=0\\
m_{13}\bar{x_3}^4\psi(\bar{x_3})-\bar{x_3}^2\overline{\partial_{\mathcal{A}}[\varphi(x_3)]}=0.
\end{cases}
\end{align*}
Let $\lambda(\bar{x_3})=s\bar{x_3}^{2l-2}$ and $\psi(x_3)=t\bar{x_3}^{2l-4}$. Then by the system of equations above, we get
$\phi(\bar{x_3})=\frac{-m_{23}t}{m_{33}}\bar{x_3}^{2l-3}$ and $\varphi(\bar{x_3})=\frac{m_{13}t}{m_{33}}\bar{x_3}^{2l-3}$.
Then
\begin{align*}
\chi &=\bar{x_1}\bar{x_3}^2\phi(\bar{x_3})+\bar{x_2}\bar{x_3}^2\varphi(\bar{x_3})+\bar{x_1}\bar{x_2}\bar{x_3}^2\psi(\bar{x_3})+\bar{x_3}^2\lambda(\bar{x_3})\\
&=\frac{-m_{23}t}{m_{33}}\bar{x_1}\bar{x_3}^{2l-1}+\frac{m_{13}t}{m_{33}}\bar{x_2}\bar{x_3}^{2l-1}+t\bar{x_1}\bar{x_2}\bar{x_3}^{2l-2}+s\bar{x_2}^{2l}\\
&=[\frac{-m_{23}\bar{x_1}\bar{x_3}+m_{13}\bar{x_2}\bar{x_3}+m_{33}\bar{x_1}\bar{x_2}}{m_{33}}]t\bar{x_3}^{2l-2}+s\bar{x_3}^{2l}\\
&=\partial_{I_3/I_2}\{[\frac{-m_{23}\bar{x_1}\bar{x_3}+m_{13}\bar{x_2}\bar{x_3}+m_{33}\bar{x_1}\bar{x_2}}{m_{33}^2}]t\bar{x_3}^{2l-1}+\frac{s}{m_{33}}\bar{x_3}^{2l-1}\}
\end{align*}
Hence $H^{2l}(I_3/I_2)=0$ for any $l\ge 3$.
\end{proof}

\begin{lem}\label{rank5}
Let $M=(m_{ij})_{3\times 3}$ and  $r(M)=2$. Then $r(X)=5$, where  $$X=\left(
      \begin{array}{ccccccccc}
        m_{11} & m_{21} & m_{31} & 0 & 0 & 0 & 0 & 0 & 0 \\
        m_{12} & m_{22} & m_{32} & m_{11} & m_{21} & m_{31} & 0 & 0 & 0 \\
        m_{13} & m_{23} & m_{33} & 0 & 0 & 0 & m_{11} & m_{21} & m_{31} \\
        0 & 0 & 0 & m_{13} & m_{23} & m_{33} & m_{12} & m_{22} & m_{32} \\
        0 & 0 & 0 & m_{12} & m_{22} & m_{32} & 0 & 0 & 0 \\
        0 & 0 & 0 & 0 & 0 & 0 & m_{13} & m_{23} & m_{33} \\
      \end{array}
    \right).$$
\end{lem}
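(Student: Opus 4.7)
The plan is to compute the dimension of the left null space of $X$ and then invoke the rank--nullity identity $r(X) = 6 - \dim \ker(X^T)$. First, I would rewrite each row of $X$ in a uniform block form using the abbreviation $C_j = (m_{1j}, m_{2j}, m_{3j})$ for the $j$-th column of $M$ regarded as a row vector of length $3$. With this notation, a direct inspection shows that the six rows of $X$ become, in order,
$$(C_1, 0, 0),\ (C_2, C_1, 0),\ (C_3, 0, C_1),\ (0, C_3, C_2),\ (0, C_2, 0),\ (0, 0, C_3).$$
Consequently, for $\mathbf{w} = (w_1, \ldots, w_6)$ the condition $\mathbf{w} X = 0$ decouples into three vector identities in $k^3$:
$$w_1 C_1 + w_2 C_2 + w_3 C_3 = 0,\quad w_2 C_1 + w_5 C_2 + w_4 C_3 = 0,\quad w_3 C_1 + w_4 C_2 + w_6 C_3 = 0.$$

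The next step uses the hypothesis $r(M) = 2$: it says exactly that $\{C_1, C_2, C_3\}$ admits a linear relation which is unique up to scalar. Fix a non-zero generator $(\lambda_1, \lambda_2, \lambda_3)$ of $\ker(M)$. Then each of the three displayed equations forces the coefficient triple appearing in it to be proportional to $(\lambda_1, \lambda_2, \lambda_3)$; equivalently, there exist scalars $c_1, c_2, c_3 \in k$ such that $(w_1, w_2, w_3) = c_1 (\lambda_1, \lambda_2, \lambda_3)$, $(w_2, w_5, w_4) = c_2 (\lambda_1, \lambda_2, \lambda_3)$, and $(w_3, w_4, w_6) = c_3 (\lambda_1, \lambda_2, \lambda_3)$.

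Matching the overlapping coordinates $w_2, w_3, w_4$ produces the compatibility relations $c_1 \lambda_2 = c_2 \lambda_1$, $c_1 \lambda_3 = c_3 \lambda_1$, and $c_2 \lambda_3 = c_3 \lambda_2$. These are precisely the vanishing of the $2 \times 2$ minors of the $2 \times 3$ matrix with rows $(c_1, c_2, c_3)$ and $(\lambda_1, \lambda_2, \lambda_3)$, so they force $(c_1, c_2, c_3) = t (\lambda_1, \lambda_2, \lambda_3)$ for a unique $t \in k$ (using that $(\lambda_1, \lambda_2, \lambda_3) \neq 0$). Substituting back, the left null space of $X$ is one-dimensional, spanned by $(\lambda_1^2, \lambda_1 \lambda_2, \lambda_1 \lambda_3, \lambda_2 \lambda_3, \lambda_2^2, \lambda_3^2)$, which is non-zero because some $\lambda_i$ is non-zero. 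Hence $r(X) = 6 - 1 = 5$.

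The only point requiring care is the passage from the three scalar compatibility equations to proportionality of $(c_1, c_2, c_3)$ and $(\lambda_1, \lambda_2, \lambda_3)$; the $2 \times 2$-minor reformulation handles this cleanly, but one could alternatively finish by a short case distinction according to which of $\lambda_1, \lambda_2, \lambda_3$ vanish. The remainder of the argument is essentially bookkeeping once the block description of $X$ above has been recorded.
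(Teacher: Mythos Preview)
Your proof is correct and takes a genuinely different route from the paper's argument. The paper fixes a nonzero vector $(s_1,s_2,s_3)^T$ in $\ker M$, assumes without loss of generality that $s_1\neq 0$, performs explicit elementary row operations on $X$ using the relation $s_1C_1+s_2C_2+s_3C_3=0$ to kill the first row, and then checks by a direct linear-independence computation that the remaining five rows are independent. Your approach instead computes the entire left null space of $X$ in one stroke: the block description reduces $\mathbf{w}X=0$ to three column relations in $M$, the rank hypothesis forces each coefficient triple to lie on the line $k\lambda$, and the overlap constraints become the vanishing of the $2\times 2$ minors of $\begin{pmatrix} c_1 & c_2 & c_3\\ \lambda_1 & \lambda_2 & \lambda_3\end{pmatrix}$, yielding a one-dimensional left kernel with the explicit generator $(\lambda_1^2,\lambda_1\lambda_2,\lambda_1\lambda_3,\lambda_2\lambda_3,\lambda_2^2,\lambda_3^2)$. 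The payoff of your method is symmetry: no case distinction on which $\lambda_i$ is nonzero is needed, and you obtain the kernel vector in closed form; the paper's method is more elementary in that it avoids the minor-vanishing argument, but at the cost of a ``without loss of generality'' and a longer verification.
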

\begin{proof}
Since $r(M)=2$, there exists $(s_1,s_2,s_3)^T\neq 0$ such that
$M(s_1,s_2,s_3)^T=0$,  which is equivalent to $$s_1\left(
                                  \begin{array}{c}
                                    m_{11}\\
                                    m_{21}\\
                                    m_{31} \\
                                  \end{array}
                                \right)+s_2\left(
                                  \begin{array}{c}
                                    m_{12}\\
                                    m_{22}\\
                                    m_{32} \\
                                  \end{array}
                                \right)+s_3\left(
                                  \begin{array}{c}
                                    m_{13}\\
                                    m_{23}\\
                                    m_{33} \\
                                  \end{array}
                                \right)=0.$$
 Without the loss of generality, let $s_1\neq 0$. Then $ \left(
                                  \begin{array}{c}
                                    m_{12}\\
                                    m_{22}\\
                                    m_{32} \\
                                  \end{array}
                                \right), \left(
                                  \begin{array}{c}
                                    m_{13}\\
                                    m_{23}\\
                                    m_{33} \\
                                  \end{array}
                                \right)$ are linearly independent and
 $$(m_{11},m_{21},m_{31})+\frac{s_2}{s_1}(m_{12},m_{22},m_{32})+\frac{s_3}{s_1}(m_{13},m_{23},m_{33})=0.$$
For $X$, we can perform the following elementary row transformations

\begin{align*}
X&\xrightarrow[r_1+\frac{s_3}{s_1}\times r_3]{r_1+\frac{s_2}{s_1}\times r_2} \left(
      \begin{array}{ccccccccc}
             0 & 0 & 0 & \frac{s_2}{s_1}m_{11} & \frac{s_2}{s_1}m_{21} & \frac{s_2}{s_1}m_{31} & \frac{s_3}{s_1}m_{11} & \frac{s_3}{s_1}m_{21} & \frac{s_3}{s_1}m_{31} \\
        m_{12} & m_{22} & m_{32} & m_{11} & m_{21} & m_{31} & 0 & 0 & 0 \\
        m_{13} & m_{23} & m_{33} & 0 & 0 & 0 & m_{11} & m_{21} & m_{31} \\
        0 & 0 & 0 & m_{13} & m_{23} & m_{33} & m_{12} & m_{22} & m_{32} \\
        0 & 0 & 0 & m_{12} & m_{22} & m_{32} & 0 & 0 & 0 \\
        0 & 0 & 0 & 0 & 0 & 0 & m_{13} & m_{23} & m_{33} \\
      \end{array}
    \right)\\
&\xrightarrow[r_1+\frac{s_2s_3}{s_1^2}\times r_4]{r_1+\frac{s_2^2}{s_1^2}\times r_5} \left(
      \begin{array}{ccccccccc}
             0 & 0 & 0 & 0 & 0 & 0 & \frac{-s_3^2}{s_1^2}m_{13} & \frac{-s_3^2}{s_1^2}m_{23} & \frac{-s_3^2}{s_1^2}m_{33} \\
        m_{12} & m_{22} & m_{32} & m_{11} & m_{21} & m_{31} & 0 & 0 & 0 \\
        m_{13} & m_{23} & m_{33} & 0 & 0 & 0 & m_{11} & m_{21} & m_{31} \\
        0 & 0 & 0 & m_{13} & m_{23} & m_{33} & m_{12} & m_{22} & m_{32} \\
        0 & 0 & 0 & m_{12} & m_{22} & m_{32} & 0 & 0 & 0 \\
        0 & 0 & 0 & 0 & 0 & 0 & m_{13} & m_{23} & m_{33} \\
      \end{array}
    \right)\\
&\xrightarrow[]{r_1+\frac{s_3^2}{s_1^2}\times r_6} \left(
      \begin{array}{ccccccccc}
             0 & 0 & 0 & 0 & 0 & 0 & 0 & 0 & 0 \\
        m_{12} & m_{22} & m_{32} & m_{11} & m_{21} & m_{31} & 0 & 0 & 0 \\
        m_{13} & m_{23} & m_{33} & 0 & 0 & 0 & m_{11} & m_{21} & m_{31} \\
        0 & 0 & 0 & m_{13} & m_{23} & m_{33} & m_{12} & m_{22} & m_{32} \\
        0 & 0 & 0 & m_{12} & m_{22} & m_{32} & 0 & 0 & 0 \\
        0 & 0 & 0 & 0 & 0 & 0 & m_{13} & m_{23} & m_{33} \\
      \end{array}
    \right).
\end{align*}
This indicates $r(X)\le 5$ and $$r(X)=r\left(
      \begin{array}{ccccccccc}
        m_{12} & m_{22} & m_{32} & m_{11} & m_{21} & m_{31} & 0 & 0 & 0 \\
        m_{13} & m_{23} & m_{33} & 0 & 0 & 0 & m_{11} & m_{21} & m_{31} \\
        0 & 0 & 0 & m_{13} & m_{23} & m_{33} & m_{12} & m_{22} & m_{32} \\
        0 & 0 & 0 & m_{12} & m_{22} & m_{32} & 0 & 0 & 0 \\
        0 & 0 & 0 & 0 & 0 & 0 & m_{13} & m_{23} & m_{33} \\
      \end{array}
    \right).$$ Let
\begin{align*}
l_1\left(
  \begin{array}{c}
    m_{12} \\
    m_{22} \\
    m_{32} \\
    m_{11} \\
    m_{21} \\
    m_{31} \\
    0 \\
    0 \\
    0 \\
  \end{array}
\right)+l_2\left(
  \begin{array}{c}
    m_{13} \\
    m_{23} \\
    m_{33} \\
    0 \\
    0 \\
    0 \\
    m_{11} \\
    m_{21} \\
    m_{31} \\
  \end{array}
\right)+l_3\left(
             \begin{array}{c}
               0 \\
               0 \\
               0 \\
               m_{13}\\
               m_{23} \\
               m_{33} \\
               m_{12} \\
               m_{22} \\
               m_{32}\\
             \end{array}
           \right)+l_4\left(
                        \begin{array}{c}
                          0\\
                          0 \\
                          0 \\
                          m_{12} \\
                          m_{22} \\
                          m_{32} \\
                          0\\
                          0 \\
                          0\\
                        \end{array}
                      \right)+l_5\left(
                                   \begin{array}{c}
                                     0 \\
                                     0 \\
                                     0 \\
                                     0 \\
                                     0 \\
                                     0 \\
                                     m_{13} \\
                                     m_{23} \\
                                     m_{33} \\
                                   \end{array}
                                 \right)=0.
\end{align*}
Then
\begin{align*}
\begin{cases}
l_1m_{12}+l_2m_{13}=0\\
l_1m_{22}+l_2m_{23}=0\\
l_1m_{32}+l_2m_{33}=0\\
l_1m_{11}+l_3m_{13}+l_4m_{12}=0\\
l_1m_{21}+l_3m_{23}+l_4m_{22}=0\\
l_1m_{31}+l_3m_{33}+l_4m_{32}=0\\
l_2m_{11}+l_3m_{12}+l_5m_{13}=0\\
l_2m_{21}+l_3m_{22}+l_5m_{23}=0\\
l_2m_{31}+l_3m_{32}+l_5m_{33}=0,
\end{cases}
\end{align*}
which implies $l_1=l_2=l_3=l_4=l_5=0$ since $ \left(
                                  \begin{array}{c}
                                    m_{12}\\
                                    m_{22}\\
                                    m_{32} \\
                                  \end{array}
                                \right), \left(
                                  \begin{array}{c}
                                    m_{13}\\
                                    m_{23}\\
                                    m_{33} \\
                                  \end{array}
                                \right)$ are linearly independent. Thus  $$r(X)=r\left(
      \begin{array}{ccccccccc}
        m_{12} & m_{22} & m_{32} & m_{11} & m_{21} & m_{31} & 0 & 0 & 0 \\
        m_{13} & m_{23} & m_{33} & 0 & 0 & 0 & m_{11} & m_{21} & m_{31} \\
        0 & 0 & 0 & m_{13} & m_{23} & m_{33} & m_{12} & m_{22} & m_{32} \\
        0 & 0 & 0 & m_{12} & m_{22} & m_{32} & 0 & 0 & 0 \\
        0 & 0 & 0 & 0 & 0 & 0 & m_{13} & m_{23} & m_{33} \\
      \end{array}
    \right)=5.$$
    Similarly, we can show $r(X)=5$ when $s_2\neq 0$ or $s_3\neq 0$.
\end{proof}

\begin{lem}\label{prime}
Let $M=(m_{ij})_{3\times 3}$ be a matrix in $M_3(k)$ with $r(M)=2$. If \begin{align*}
r_1 &=m_{11}x_1^2+m_{12}x_2^2+m_{13}x_3^2,\\
r_2 &=m_{21}x_1^2+m_{22}x_2^2+m_{23}x_3^2,\\
r_3 &=m_{31}x_1^2+m_{32}x_2^2+m_{33}x_3^2,
\end{align*}
then the graded ideal $(r_1,r_2,r_3)$ is a prime graded ideal of the polynomial graded algebra $k[x_1^2,x_2^2,x_3^2]$.
\end{lem}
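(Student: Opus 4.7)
The plan is to identify the polynomial graded algebra $k[x_1^2,x_2^2,x_3^2]$ with an ordinary polynomial ring in three variables (all of degree $2$), and then reduce the statement to the standard fact that linearly independent linear forms in a polynomial ring cut out a prime ideal.

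First I would set $y_i := x_i^2$ for $i=1,2,3$. Then $k[x_1^2,x_2^2,x_3^2]\cong k[y_1,y_2,y_3]$, with the $y_i$ all of (even) degree $2$, and each $r_i$ becomes the degree-$2$ linear form $\sum_j m_{ij}y_j$ in $k[y_1,y_2,y_3]$. Since $r(M)=2$, the row space of $M$ is a two-dimensional subspace of $k^3$, so among $r_1,r_2,r_3$ there exist two linearly independent rows, say $r_{i_1},r_{i_2}$, and the third row is a $k$-linear combination of these. Consequently $(r_1,r_2,r_3)=(r_{i_1},r_{i_2})$ as a graded ideal of $k[y_1,y_2,y_3]$.

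Next I would perform a $\mathrm{GL}_3(k)$ change of variables on $(y_1,y_2,y_3)$. Because $r_{i_1}$ and $r_{i_2}$ are linearly independent degree-$1$ forms in $y_1,y_2,y_3$, they extend to a basis $\{r_{i_1},r_{i_2},r_{i_3}'\}$ of the degree-$1$ (in $y$) piece $ky_1\oplus ky_2\oplus ky_3$. Set $z_1:=r_{i_1}$, $z_2:=r_{i_2}$, $z_3:=r_{i_3}'$; all three have homogeneous degree $2$ in the original grading, and $k[y_1,y_2,y_3]=k[z_1,z_2,z_3]$ as graded algebras. Under this identification,
\[
\frac{k[x_1^2,x_2^2,x_3^2]}{(r_1,r_2,r_3)} \;=\; \frac{k[z_1,z_2,z_3]}{(z_1,z_2)} \;\cong\; k[z_3],
\]
which is an integral domain. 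Hence $(r_1,r_2,r_3)$ is a prime graded ideal of $k[x_1^2,x_2^2,x_3^2]$.

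I do not anticipate a serious obstacle: the argument is essentially linear algebra plus the elementary fact that a polynomial ring in one variable over $k$ is an integral domain. The only mild care needed is in tracking that the change of variables respects the grading, which is automatic because $y_1,y_2,y_3$ all have the same degree $2$.
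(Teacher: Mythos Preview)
Your proof is correct and follows essentially the same approach as the paper: both reduce the ideal $(r_1,r_2,r_3)$ to one generated by two independent linear forms in $y_i=x_i^2$ and then observe that the quotient is a polynomial ring in one variable, hence a domain. The only cosmetic difference is that the paper picks a specific nonzero $2\times 2$ minor to eliminate two of the $y_i$ explicitly, whereas you perform an abstract $\mathrm{GL}_3(k)$ change of coordinates.
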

\begin{proof}
Since $r(M)=2$, there exist a non-zero solution vector $(t_1,t_2,t_3)^T$ of the homogeneous linear equations $M^TX=0$. We have
\begin{align*}
t_1r_1+t_2r_2+t_3r_3=(t_1,t_2,t_3)\left(
                                                           \begin{array}{c}
                                                             r_1 \\
                                                             r_2 \\
                                                             r_3 \\
                                                           \end{array}
                                                         \right) =(t_1,t_2,t_3)M\left(
                                                           \begin{array}{c}
                                                             x_1^2 \\
                                                             x_2^2 \\
                                                             x_3^2 \\
                                                           \end{array}
                                                         \right)=0.
\end{align*}
Since $(t_1,t_2,t_3)^T\neq 0$, we may as well let $t_3\neq 0$. Then
$r_3=-\frac{t_1}{t_3}r_1-\frac{t_2}{t_3}r_2$ and hence $(r_1,r_2,r_3)=(r_1,r_2)$. Since $$
\left(
  \begin{array}{c}
    m_{31} \\
    m_{32} \\
    m_{33}\\
  \end{array}
\right)=
-\frac{t_1}{t_3}\left(
                  \begin{array}{c}
                    m_{11} \\
                    m_{12}\\
                    m_{13} \\
                  \end{array}
                \right)
-\frac{t_2}{t_3}\left(
                  \begin{array}{c}
                    m_{21} \\
                    m_{22}\\
                    m_{23}\\
                  \end{array}
                \right),$$ we have
$$r\left(
     \begin{array}{ccc}
       m_{11} & m_{12} & m_{13} \\
       m_{21} & m_{22} & m_{23} \\
     \end{array}
   \right)
=2,$$  this indicates that there at least one non-zero minor among
\begin{align*}
\left|
  \begin{array}{cc}
    m_{11} & m_{12} \\
    m_{21} & m_{22} \\
  \end{array}
\right|, \left|
  \begin{array}{cc}
    m_{11} & m_{13} \\
    m_{21} & m_{23} \\
  \end{array}
\right|, \left|
           \begin{array}{cc}
             m_{12} & m_{13} \\
             m_{22} & m_{23} \\
           \end{array}
         \right|.
\end{align*}
We may as well let $\left|
  \begin{array}{cc}
    m_{11} & m_{12} \\
    m_{21} & m_{22} \\
  \end{array}
\right|\neq 0$. Then one sees that
\begin{align*}
k[x_1^2,x_2^2,x_3^2]/(r_1,r_2)\cong k[x_3^2]
\end{align*}
is a domain. So $(r_1,r_2,r_3)=(r_1,r_2)$ is a graded prime ideal of $k[x_1^2,x_2^2,x_3^2]$.
\end{proof}

\begin{lem}\label{second}
  Assume that $M=(m_{ij})_{3\times 3}\in M_3(k)$ with $r(M)=2$,  $k(s_1,s_2,s_3)^T$ and $k(t_1,t_2,t_3)^T$ are the solution spaces of  homogeneous linear equations $MX=0$ and $M^TX=0$, respectively.  We have the following statements.

(1) If $s_1t_1^2+s_2t_2^2+s_3t_3^2\neq 0$,
then $k[\lceil t_1x_1 +t_2x_2+t_3x_3 \rceil]$ is a subalgebra of $H(\mathcal{A})$;

(2) If $s_1t_1^2+s_2t_2^2+s_3t_3^2=0$, then
$$k[\lceil t_1x_1 +t_2x_2+t_3x_3\rceil,\lceil s_1x_1^2+s_2x_2^2+s_3x_3^2\rceil ]/(\lceil t_1x_1 +t_2x_2+t_3x_3\rceil^2)$$ is a subalgebra of $H(\mathcal{A})$.
\end{lem}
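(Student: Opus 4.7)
The plan is to verify that $\tau := t_1 x_1 + t_2 x_2 + t_3 x_3$ and $\sigma := s_1 x_1^2 + s_2 x_2^2 + s_3 x_3^2$ are cocycles, to compute $\tau^2 = \rho := t_1^2 x_1^2 + t_2^2 x_2^2 + t_3^2 x_3^2$ (the cross terms vanish because $x_i x_j + x_j x_i = 0$ for $i\neq j$), and then to characterize when $\rho^m$ is a coboundary via the ideal $I:=(r_1,r_2,r_3)$ of $k[x_1^2,x_2^2,x_3^2]$, where $r_i := \partial_{\mathcal{A}}(x_i) = m_{i1}x_1^2 + m_{i2}x_2^2 + m_{i3}x_3^2$. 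The cocycle property of $\tau$ follows from $\partial_{\mathcal{A}}(\tau) = \sum_j (\sum_i t_i m_{ij}) x_j^2 = 0$ since $(t_1,t_2,t_3)^T \in \ker M^T$, and $\sigma$ is a cocycle by Lemma \ref{centcocy}.

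The core step is the identification: the positive-degree coboundaries lying in the subalgebra $k[x_1^2,x_2^2,x_3^2]$ equal the positive-degree part of $I$. The inclusion $I^{>0} \subseteq \mathrm{Im}(\partial_{\mathcal{A}})$ follows from $r_i f = \partial_{\mathcal{A}}(x_i f)$ for any $f \in k[x_1^2, x_2^2, x_3^2]$ (which itself is a cocycle by Lemma \ref{centcocy}). For the converse, by Lemma \ref{coboundary}(2) every coboundary of even degree $\ge 4$ has the form $\partial_{\mathcal{A}}(x_1 f + x_2 g + x_3 h + x_1 x_2 x_3 u)$ with $f, g, h, u \in k[x_1^2, x_2^2, x_3^2]$; and $\partial_{\mathcal{A}}(x_1 x_2 x_3 u) = x_2 x_3 (r_1 u) - x_1 x_3 (r_2 u) + x_1 x_2 (r_3 u)$ splits into the three distinct parity components $(0,1,1), (1,0,1), (1,1,0)$ of the natural $(\Bbb{Z}/2)^3$-grading on $\mathcal{A}^{\#}$. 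For the total to land in the $(0,0,0)$-component $k[x_1^2,x_2^2,x_3^2]$ each summand must vanish; since $r(M) \ge 1$ some $r_i$ is nonzero and $k[x_1^2, x_2^2, x_3^2]$ is an integral domain, this forces $u = 0$, so the coboundary reduces to $r_1 f + r_2 g + r_3 h \in I^{>0}$. The degree-$2$ case is direct: $\partial_{\mathcal{A}}(\sum a_i x_i) = \sum a_i r_i \in I$.

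With this in hand, $\rho \in I$ iff $(t_1^2, t_2^2, t_3^2)$ lies in the row space of $M$, equivalently in $(\ker M)^\perp = \{v : v_1 s_1 + v_2 s_2 + v_3 s_3 = 0\}$, i.e. iff $s_1 t_1^2 + s_2 t_2^2 + s_3 t_3^2 = 0$. For (1), $\rho \notin I$, and by Lemma \ref{prime} the ideal $I$ is prime, so $\rho^m \notin I$ for all $m \ge 1$ and $\lceil\tau\rceil^{2m} = \lceil\rho\rceil^m \neq 0$. If $\lceil\tau\rceil^{2m+1} = 0$ then $\lceil\tau\rceil^{2m+2} = \lceil\tau\rceil \cdot \lceil\tau\rceil^{2m+1} = 0$, a contradiction, so the odd powers are also nonzero; hence $k[\lceil\tau\rceil]$ is a polynomial subalgebra of $H(\mathcal{A})$. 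For (2), $\rho \in I$ gives $\tau^2$ a coboundary, so $\lceil\tau\rceil^2 = 0$; since $\sigma$ is central in $\mathcal{A}$ (its summands are central by Lemma \ref{centcocy}), $\lceil\sigma\rceil$ commutes with $\lceil\tau\rceil$, producing a well-defined algebra map $k[\tau,\sigma]/(\tau^2) \to H(\mathcal{A})$ whose image is the claimed subalgebra.

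The main obstacle is the parity-component analysis in the core identification: one must carefully rule out the auxiliary $x_1x_2x_3 u$ term from Lemma \ref{coboundary}(2) to conclude that coboundaries in $k[x_1^2,x_2^2,x_3^2]^{>0}$ coincide with $I^{>0}$. Once that identification is established, the remainder is straightforward linear algebra combined with the primeness of $I$ from Lemma \ref{prime}.
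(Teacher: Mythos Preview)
Your argument for part (1) is correct and follows the paper's approach: both reduce to showing $\rho^m\notin I$ via primeness (Lemma~\ref{prime}) after identifying positive-degree coboundaries lying in $k[x_1^2,x_2^2,x_3^2]$ with $I^{>0}$ using Lemma~\ref{coboundary}. Your treatment of odd powers---deducing $\lceil\tau\rceil^{2m+1}\neq 0$ from $\lceil\tau\rceil^{2m+2}\neq 0$ by multiplying by $\lceil\tau\rceil$---is a clean shortcut over the paper's explicit expansion.

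Your proof of part (2), however, is incomplete. Producing a well-defined algebra map $k[T,S]/(T^2)\to H(\mathcal{A})$ is not enough: the assertion that the displayed algebra \emph{is} a subalgebra of $H(\mathcal{A})$ means this map is injective, i.e., that $\lceil\sigma\rceil^m\neq 0$ and $\lceil\tau\rceil\lceil\sigma\rceil^m\neq 0$ for all $m\ge 1$. This injectivity is precisely what Proposition~\ref{rmtwoimc} needs to match the subalgebra against the dimension count $\dim_k H^i(\mathcal{A})=1$. The paper proves both nonvanishings: first it shows $\sigma\notin B^2(\mathcal{A})$ by a rank argument on $M^T$, whence $\sigma^m\notin I$ by primeness; second, by expanding via Lemma~\ref{coboundary}(1) it shows that $\tau\sigma^j\in B^{2j+1}(\mathcal{A})$ would force $t_i\sigma^j\in I$ for each $i$, hence $\sigma^j\in I$, a contradiction. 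Your odd-power shortcut from part (1) does \emph{not} transfer here: since now $\lceil\tau\rceil^2=0$, multiplying $\lceil\tau\sigma^m\rceil$ by $\lceil\tau\rceil$ gives $0$ and yields no information. You must supply these two arguments; your core identification together with Lemma~\ref{prime} handles the first, but the second genuinely requires the explicit odd-degree expansion the paper carries out.
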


\begin{proof}
Clearly, we have $H^0(\mathcal{A})=k$. Since $r(M^T)=2<3$, there is a non-zero solution vector $(t_1,t_2,t_3)^T$ of the homogeneous linear equations $M^TX=0$. For any $c_1 x_1+c_2x_2+c_3x_3\in Z^1(\mathcal{A})$, we have
\begin{align*}
0&=\partial_{\mathcal{A}}(c_1 x_1+c_2x_2+c_3x_3)\\
&= (c_1,c_2,c_3)\left(
                                                           \begin{array}{c}
                                                             \partial_{\mathcal{A}}(x_1) \\
                                                              \partial_{\mathcal{A}}(x_2) \\
                                                              \partial_{\mathcal{A}}(x_3) \\
                                                           \end{array}
                                                         \right) \\
                         &=(c_1,c_2,c_3)M \left(
                                            \begin{array}{c}
                                              x_1^2 \\
                                              x_2^2 \\
                                              x_3^2 \\
                                            \end{array}
                                          \right),
\end{align*}
which implies that $(c_1,c_2,c_3)M =0$ or equivalently $M^T\left(
                                                             \begin{array}{c}
                                                               c_1 \\
                                                               c_2 \\
                                                               c_3 \\
                                                             \end{array}
                                                           \right)=0$. Thus $H^1(\mathcal{A})=k\lceil t_1x_1 +t_2x_2+t_3x_3 \rceil$.

                                                           For any $l_{11}x_1^2+l_{12}x_1x_2+l_{13}x_1x_3+l_{22}x_2^2+l_{23}x_2x_3+l_{33}x_3^2\in  \mathrm{ker}(\partial_{\mathcal{A}}^2)$, we have
\begin{align*}
0&=\partial_{\mathcal{A}}[l_{11}x_1^2+l_{12}x_1x_2+l_{13}x_1x_3+l_{22}x_2^2+l_{23}x_2x_3+l_{33}x_3^2]\\
 &=l_{12}(m_{11}x_1^2+m_{12}x_2^2+m_{13}x_3^2)x_2-l_{12}x_1(m_{21}x_1^2+m_{22}x_2^2+m_{23}x_3^2)\\
 &\quad +l_{13}(m_{11}x_1^2+m_{12}x_2^2+m_{13}x_3^2)x_3-l_{13}x_1(m_{31}x_1^2+m_{32}x_2^2+m_{33}x_3^2)\\
 &\quad +l_{23}(m_{21}x_1^2+m_{22}x_2^2+m_{23}x_3^2)x_3-l_{23}x_2(m_{31}x_1^2+m_{32}x_2^2+m_{33}x_3^2)\\
 &=-(l_{12}m_{21}+l_{13}m_{31})x_1^3+(l_{12}m_{11}-l_{23}m_{31})x_1^2x_2+(l_{13}m_{11}+l_{23}m_{21})x_1^2x_3\\
 &\quad -(l_{12}m_{22}+l_{13}m_{32})x_1x_2^2 +(l_{13}m_{12}+l_{23}m_{22})x_2^2x_3+(l_{12}m_{12}-l_{23}m_{32})x_2^3\\
 &\quad -(l_{12}m_{23}+l_{13}m_{33})x_1x_3^2 +(l_{12}m_{13}-l_{23}m_{33})x_2x_3^2 +(l_{13}m_{13}+l_{23}m_{23})x_3^3.
\end{align*}
Hence
\begin{align*}
\begin{cases}
l_{12}m_{21}+l_{13}m_{31}=0\\
l_{12}m_{11}-l_{23}m_{31}=0\\
l_{13}m_{11}+l_{23}m_{21}=0\\
l_{12}m_{22}+l_{13}m_{32}=0\\
l_{13}m_{12}+l_{23}m_{22}=0\\
l_{12}m_{12}-l_{23}m_{32}=0\\
l_{12}m_{23}+l_{13}m_{33}=0\\
l_{12}m_{13}-l_{23}m_{33}=0\\
l_{13}m_{13}+l_{23}m_{23}=0
\end{cases} \Leftrightarrow  \begin{cases}
l_{12}m_{21}+l_{13}m_{31}=0\\
l_{12}m_{22}+l_{13}m_{32}=0\\
l_{12}m_{23}+l_{13}m_{33}=0\\
l_{12}m_{11}-l_{23}m_{31}=0\\
l_{12}m_{12}-l_{23}m_{32}=0\\
l_{12}m_{13}-l_{23}m_{33}=0\\
l_{13}m_{11}+l_{23}m_{21}=0\\
l_{13}m_{12}+l_{23}m_{22}=0\\
l_{13}m_{13}+l_{23}m_{23}=0,
\end{cases}
\end{align*}
which is equivalent to $$ \left(
                            \begin{array}{ccc}
                              m_{11} & m_{21} & m_{31} \\
                              m_{12} & m_{22} & m_{32} \\
                              m_{13} & m_{23} & m_{33} \\
                            \end{array}
                          \right)
 \left(
                                  \begin{array}{ccc}
                                    0 & l_{12} & l_{13} \\
                                    l_{12} & 0 & l_{23} \\
                                    l_{13} & -l_{23} & 0 \\
                                  \end{array}
                                \right)=0_{3\times 3}.$$
                                We claim that $l_{12}=l_{23}=l_{13}=0$. Indeed, if any one of $l_{12}, l_{23}, l_{13}$ is nonzero, then
there are at least two non-zero linear independent vectors among
$$ \left(
     \begin{array}{c}
       0 \\
       l_{12} \\
       l_{13} \\
     \end{array}
   \right),\left(
             \begin{array}{c}
               l_{12} \\
               0 \\
                -l_{23} \\
             \end{array}
           \right),\left(
                     \begin{array}{c}
                       l_{13} \\
                        l_{23}\\
                       0\\
                     \end{array}
                   \right),
 $$
 which are all solutions of $MX=0$. This contradicts with $r(M)=2$. Hence $\mathrm{ker}(\partial_{\mathcal{A}}^2)=kx_1^2\oplus kx_2^2\oplus kx_3^2$.
                                                           In $\mathcal{A}$, we have
                                                    $$(t_1x_1 +t_2x_2+t_3x_3)^2 =t_1^2x_1^2 + t_2^2x_2^2+t_3^2x_3^2.$$
 (1)If $s_1t_1^2+s_2t_2^2+s_3t_3^2\neq 0$, we claim that $t_1^2x_1^2 + t_2^2x_2^2+t_3^2x_3^2\not\in B^2(\mathcal{A})$.
 Indeed, if there exist $q_1x_1+q_2x_2+q_3x_3\in \mathcal{A}^1$ such that $\partial_{\mathcal{A}}(q_1x_1+q_2x_2+q_3x_3)=t_1^2x_1^2 + t_2^2x_2^2+t_3^2x_3^2$, then
\begin{align*}
   (q_1,q_2,q_3)M\left(
                   \begin{array}{c}
                     x_1^2\\
                     x_2^2 \\
                     x_3^2 \\
                   \end{array}
                 \right)
          &= \partial_{\mathcal{A}}(q_1x_1+q_2x_2+q_3x_3) \\
          &= t_1^2x_1^2 + t_2^2x_2^2+t_3^2x_3^2 \\
          &=(t_1^2,t_2^2,t_3^2)\left(
                                 \begin{array}{c}
                                   x_1^2 \\
                                   x_2^2 \\
                                   x_3^2 \\
                                 \end{array}
                               \right),
\end{align*}
which implies that $(q_1,q_2,q_3)M=(t_1^2,t_2^2,t_3^2)$ and hence
$$0=(q_1,q_2,q_3)M \left(
                     \begin{array}{c}
                       s_1 \\
                       s_2 \\
                       s_3 \\
                     \end{array}
                   \right)=(t_1^2,t_2^2,t_3^2)\left(
                     \begin{array}{c}
                       s_1 \\
                       s_2 \\
                       s_3 \\
                     \end{array}
                   \right) =s_1t_1^2+s_2t_2^2+s_3t_3^2. $$
                   This contradicts with  the assumption that $s_1t_1^2+s_2t_2^2+s_3t_3^2\neq 0$. Then we get that
                    $t_1^2x_1^2 + t_2^2x_2^2+t_3^2x_3^2\not\in B^2(\mathcal{A})$ if $s_1t_1^2+s_2t_2^2+s_3t_3^2\neq 0$.  On the  other hand, we have $\dim_k B^2(\mathcal{A})=2$ since $r(M)=2$. Therefore $\dim_k H^2(\mathcal{A})=1$ and $$H^2(\mathcal{A})=k\lceil t_1^2x_1^2 + t_2^2x_2^2+t_3^2x_3^2 \rceil =k\lceil t_1x_1 + t_2x_2+t_3x_3 \rceil^2.$$ In order to show $k[\lceil t_1x_1+t_2x_2+t_3x_3\rceil]$ is a subalgebra of $H(\mathcal{A})$, we need to show
                     $(t_1x_1+t_2x_2+t_3x_3)^n\not\in B^n(\mathcal{A})$ for any $n\ge 3$. If this not the case, we have
                     \begin{align*}
                     (t_1x_1+t_2x_2+t_3x_3)^n=\begin{cases}
                     \partial_{\mathcal{A}}[x_1x_2f+x_1x_3g+x_2x_3h], \,\, \text{if}\,\, n=2j+1 \,\, \text{is odd}     \\
                     \partial_{\mathcal{A}}[x_1f+x_2g+x_3h+x_1x_2x_3u], \,\, \text{if}\,\, n=2j \,\, \text{is even}
                     \end{cases}
                     \end{align*}
where $f,g,h$ and $u$ are all linear combinations of monomials with non-negative even exponents.
 When $n=2j$ is even, we have
 \begin{align*}
\quad (t_1^2x_1^2+&t_2^2x_2^2+t_3^2x_3^2)^j =(t_1x_1+t_2x_2+t_3x_3)^n\\
&=\partial_{\mathcal{A}}[x_1f+x_2g+x_3h+x_1x_2x_3u]\\
                                    &=(m_{11}x_1^2+m_{12}x_2^2+m_{13}x_3^2)f+(m_{21}x_1^2+m_{22}x_2^2+m_{23}x_3^2)g\\
                                    &+(m_{31}x_1^2+m_{32}x_2^2+m_{33}x_3^2)h+(m_{11}x_1^2+m_{12}x_2^2+m_{13}x_3^2)x_2x_3u \\
                                    &-x_1(m_{21}x_1^2+m_{22}x_2^2+m_{23}x_3^2)x_3g+x_1x_2(m_{31}x_1^2+m_{32}x_2^2+m_{33}x_3^2)u.
 \end{align*}
Considering the parity of exponents of the monomials that appear on both sides the equation above implies that
\begin{align*}
(t_1^2x_1^2+t_2^2x_2^2+t_3^2x_3^2)^j&=(m_{11}x_1^2+m_{12}x_2^2+m_{13}x_3^2)f+(m_{21}x_1^2+m_{22}x_2^2+m_{23}x_3^2)g\\
                                    &+(m_{31}x_1^2+m_{32}x_2^2+m_{33}x_3^2)h\\
                                    &=\partial_{\mathcal{A}}(x_1)f+\partial_{\mathcal{A}}(x_2)g+\partial_{\mathcal{A}}(x_3)h
\end{align*}
and
\begin{align*}
\partial_{\mathcal{A}}(x_1x_2x_3u)
&=(m_{11}x_1^2+m_{12}x_2^2+m_{13}x_3^2)x_2x_3u-x_1(m_{21}x_1^2+m_{22}x_2^2+m_{23}x_3^2)x_3g \\
&+x_1x_2(m_{31}x_1^2+m_{32}x_2^2+m_{33}x_3^2)u=0.
\end{align*}
Therefore, $(t_1^2x_1^2+t_2^2x_2^2+t_3^2x_3^2)^j$ is in the graded ideal $(\partial_{\mathcal{A}}(x_1),\partial_{\mathcal{A}}(x_2),\partial_{\mathcal{A}}(x_3))$ of $k[x_1^2,x_2^2,x_3^2]$. By Lemma \ref{prime}, $(\partial_{\mathcal{A}}(x_1),\partial_{\mathcal{A}}(x_2),\partial_{\mathcal{A}}(x_3))$ is a graded prime ideal of $k[x_1^2,x_2^2,x_3^2]$. So $t_1^2x_1^2+t_2^2x_2^2+t_3^2x_3^2\in (\partial_{\mathcal{A}}(x_1),\partial_{\mathcal{A}}(x_2),\partial_{\mathcal{A}}(x_3))$. Hence there exist $a_1,a_2$ and $a_3$ in $k$ such that \begin{align*}
t_1^2x_1^2+t_2^2x_2^2+t_3^2x_3^2&=a_1\partial_{\mathcal{A}}(x_1)+a_2\partial_{\mathcal{A}}(x_2)+a_3\partial_{\mathcal{A}}(x_3)\\
                                &=\partial_{\mathcal{A}}(a_1x_1+a_2x_2+a_3x_3).
\end{align*}
But this contradicts with the fact that $t_1^2x_1^2+t_2^2x_2^2+t_3^2x_3^2\not\in B^2(\mathcal{A})$, which we have proved above.
Thus $(t_1x_1+t_2x_2+t_3x_3)^n\not\in B^n(\mathcal{A})$ when $n$ is even.

When $n=2j+1$ is odd, we have
 \begin{align*}
 &\quad\quad(t_1x_1+t_2x_2+t_3x_3)(t_1^2x_1^2+t_2^2x_2^2+t_3^2x_3^2)^j =(t_1x_1+t_2x_2+t_3x_3)^n\\
&=\partial_{\mathcal{A}}[x_1x_2f+x_1x_3g+x_2x_3h]\\
                                    &=(m_{11}x_1^2+m_{12}x_2^2+m_{13}x_3^2)x_2f-x_1(m_{21}x_1^2+m_{22}x_2^2+m_{23}x_3^2)f\\
                                    &+(m_{11}x_1^2+m_{12}x_2^2+m_{13}x_3^2)x_3g-x_1(m_{31}x_1^2+m_{32}x_2^2+m_{33}x_3^2)g \\
                                    &+(m_{21}x_1^2+m_{22}x_2^2+m_{23}x_3^2)x_3h-x_2(m_{31}x_1^2+m_{32}x_2^2+m_{33}x_3^2)h\\
                       &=-x_1[(m_{21}x_1^2+m_{22}x_2^2+m_{23}x_3^2)f+(m_{31}x_1^2+m_{32}x_2^2+m_{33}x_3^2)g]\\
                                    &+x_2[(m_{11}x_1^2+m_{12}x_2^2+m_{13}x_3^2)f-(m_{31}x_1^2+m_{32}x_2^2+m_{33}x_3^2)h]\\
                                    &+x_3[(m_{21}x_1^2+m_{22}x_2^2+m_{23}x_3^2)h+(m_{11}x_1^2+m_{12}x_2^2+m_{13}x_3^2)g]\\
                                    &=x_1[-\partial_{\mathcal{A}}(x_2)f-\partial_{\mathcal{A}}(x_3)g]+x_2[\partial_{\mathcal{A}}(x_1)f-\partial_{\mathcal{A}}(x_3)h]+x_3[\partial_{\mathcal{A}}(x_2)h+\partial_{\mathcal{A}}(x_1)g].
 \end{align*}
This implies that
\begin{align*}
\begin{cases}
t_1(t_1^2x_1^2+t_2^2x_2^2+t_3^2x_3^2)^j=-\partial_{\mathcal{A}}(x_2)f-\partial_{\mathcal{A}}(x_3)g=\partial_{\mathcal{A}}[-x_2f-x_3g]\\
t_2(t_1^2x_1^2+t_2^2x_2^2+t_3^2x_3^2)^j=\partial_{\mathcal{A}}(x_1)f-\partial_{\mathcal{A}}(x_3)h=\partial_{\mathcal{A}}[x_1f-x_3h]\\
t_3(t_1^2x_1^2+t_2^2x_2^2+t_3^2x_3^2)^j=\partial_{\mathcal{A}}(x_2)h+\partial_{\mathcal{A}}(x_1)g=\partial_{\mathcal{A}}[x_2h+x_1g].
\end{cases}
\end{align*}
Since $(t_1,t_2,t_3)^T\neq 0$, there is at least one none-zero $t_i, i\in \{1,2,3\}$. Then we get $(t_1^2x_1^2+t_2^2x_2^2+t_3^2x_3^2)^j=(t_1x_1+t_2x_2+t_3x_3)^{2j}\in B^{2j}(\mathcal{A})$, which contradicts with proved fact that $(t_1x_1+t_2x_2+t_3x_3)^n\not\in B^n(\mathcal{A})$ when $n$ is even. Therefore, $(t_1x_1+t_2x_2+t_3x_3)^n\not\in B^n(\mathcal{A})$ when $n$ is odd.

Then we reach a conclusion that $k[\lceil t_1x_1+t_2x_2+t_3x_3\rceil]$ is a subalgebra of $H(\mathcal{A})$ when $s_1t_1^2+s_2t_2^2+s_3t_3^2\neq 0$.

 (2)When $s_1t_1^2+s_2t_2^2+s_3t_3^2=0$, we should show $t_1^2x_1^2+t_2^2x_2^2+t_3^2x_3^2\in B^2(\mathcal{A})$ and
  $s_1x_1^2+s_2x_2^2+s_3x_3^2\not\in B^2(\mathcal{A})$ first. In order to prove $t_1^2x_1^2+t_2^2x_2^2+t_3^2x_3^2\in B^2(\mathcal{A})$, we need to show the
existence of an element $q_1x_1+q_2x_2+q_3x_3\in \mathcal{A}^1$ such that
 \begin{align*}
 \partial_{\mathcal{A}}(q_1x_1+q_2x_2+q_3x_3)&=(q_1,q_2,q_3)M\left(
                                                        \begin{array}{c}
                                                          x_1^2 \\
                                                          x_2^2 \\
                                                          x_3^2 \\
                                                        \end{array}
                                                      \right) \\
                                       &= (t_1^2,t_2^2,t_3^2)\left(
                                                        \begin{array}{c}
                                                          x_1^2 \\
                                                          x_2^2 \\
                                                          x_3^2 \\
                                                        \end{array}
                                                      \right),
 \end{align*}
which is equivalent to  $$M^T\left(
                              \begin{array}{c}
                                q_1 \\
                                q_2 \\
                                q_3 \\
                              \end{array}
                            \right)=\left(
                                      \begin{array}{c}
                                        t_1^2 \\
                                        t_2^2 \\
                                        t_3^2 \\
                                      \end{array}
                                    \right).
                            $$
                            Hence it suffices to show that the nonhomogeneous linear equations
                            $$M^TX=\left(
                                      \begin{array}{c}
                                        t_1^2 \\
                                        t_2^2 \\
                                        t_3^2 \\
                                      \end{array}
                                    \right)$$
                                    has solutions. Let $M=(\beta_1,\beta_2,\beta_3)$ and $b=\left(
                                                                                              \begin{array}{c}
                                                                                                t_1^2 \\
                                                                                                t_2^2 \\
                                                                                                t_3^2 \\
                                                                                              \end{array}
                                                                                            \right)$. Since $M\left(
                                                                                                                                          \begin{array}{c}
                                                                                                                                            s_1 \\
                                                                                                                                            s_2 \\
                                                                                                                                            s_3 \\
                                                                                                                                          \end{array}
                                                                                                                                        \right)=0$,
                                                                                                                                         we have $\sum\limits_{i=1}^3s_i\beta_i=0$ and hence $\sum\limits_{i=1}^3s_i\beta_i^T=0$.
Hence \begin{align*}
r(M^T,b)=r\left(
            \begin{array}{cc}
              \beta^T_1 & t_1^2 \\
              \beta_2^T & t_2^2 \\
              \beta_3^T & t_3^2 \\
            \end{array}
          \right)&=r\left(
            \begin{array}{cc}
              \beta^T_1 & t_1^2 \\
              \beta_2^T & t_2^2 \\
           s_1\beta^T_1 + s_2 \beta_2^T+ s_3\beta_3^T &  s_1t_1^2+s_2t_2^2+s_3t_3^2 \\
            \end{array}
          \right)\\
           &=r\left(
            \begin{array}{cc}
              \beta^T_1 & t_1^2 \\
              \beta_2^T & t_2^2 \\
               0 &  0 \\
            \end{array}
          \right)\le 2.
\end{align*}
On the other hand,  we have $r(M^T,b)\ge r(M^T)=2$. So $r(M^T,b)=2=r(M^T)$ and then  the nonhomogeneous linear equations
                            $$M^TX=\left(
                                      \begin{array}{c}
                                        t_1^2 \\
                                        t_2^2 \\
                                        t_3^2 \\
                                      \end{array}
                                    \right)$$
                                    has solutions.

                                     Now, let us prove $s_1x^2+s_2y^2+s_3z^2\not\in \mathrm{im}(\partial_{\mathcal{A}})$, which is equivalent to  the nonhomogeneous linear equations $$M^TX=\left(
                                      \begin{array}{c}
                                        s_1 \\
                                        s_2 \\
                                        s_3 \\
                                      \end{array}
                                    \right)
                            $$ has no solutions. Let $s=\left(
                                      \begin{array}{c}
                                        s_1 \\
                                        s_2 \\
                                        s_3 \\
                                      \end{array}
                                    \right)$.  Then \begin{align*}
r(M^T,s)=r\left(
            \begin{array}{cc}
              \beta^T_1 & s_1 \\
              \beta_2^T & s_2 \\
              \beta_3^T & s_3 \\
            \end{array}
          \right)&=r\left(
            \begin{array}{cc}
              \beta^T_1 & s_1 \\
              \beta_2^T & s_2 \\
           s_1\beta^T_1 + s_2 \beta_2^T+ s_3\beta_3^T &  s_1^2+s_2^2+s_3^2 \\
            \end{array}
          \right)\\
           &=r\left(
            \begin{array}{cc}
              \beta^T_1 & s_1 \\
              \beta_2^T & s_2 \\
               0 &  s_1^2+s_2^2+s_3^2 \\
            \end{array}
          \right)= 3\neq r(M^T)=2.
\end{align*}
Hence $M^TX=s$ has no solutions and $H^2(\mathcal{A})=k\lceil s_1x_1^2+s_2x_2^2+s_3x_3^2 \rceil$. It remains to show that
$$(s_1x_1^2+s_2x_2^2+s_3x_3^2)^{j+1}\not\in B^{2j+2}(\mathcal{A})$$ and $$(t_1x_1+t_2x_2+t_3x_3)(s_1x_1^2+s_2x_2^2+s_3x_3^2)^j\not\in B^{2j+1}(\mathcal{A})$$ for any $j\ge 1$. We will use a proof by contradiction.

If $(s_1x_1^2+s_2x_2^2+s_3x_3^2)^{j+1}\in B^{2j+2}(\mathcal{A})$, then by Lemma \ref{coboundary}, we have  $$(s_1x_1^2+s_2x_2^2+s_3x_3^2)^{j+1}=\partial_{\mathcal{A}}[x_1f+x_2g+x_3h+x_1x_2x_3u],$$ where $f, g$, $h$ and $u$ are all linear combinations of monomials with non-negative even exponents. Considering the parity of exponents of the monomials that appear on both sides of the following equation
\begin{align*}
\quad (s_1x_1^2+&s_2x_2^2+s_3x_3^2)^{j+1} =\partial_{\mathcal{A}}[x_1f+x_2g+x_3h+x_1x_2x_3u]\\
                                    &=(m_{11}x_1^2+m_{12}x_2^2+m_{13}x_3^2)f+(m_{21}x_1^2+m_{22}x_2^2+m_{23}x_3^2)g\\
                                    &+(m_{31}x_1^2+m_{32}x_2^2+m_{33}x_3^2)h+(m_{11}x_1^2+m_{12}x_2^2+m_{13}x_3^2)x_2x_3u \\
                                    &-x_1(m_{21}x_1^2+m_{22}x_2^2+m_{23}x_3^2)x_3g+x_1x_2(m_{31}x_1^2+m_{32}x_2^2+m_{33}x_3^2)u
 \end{align*}
  implies that
\begin{align*}
(s_1x_1^2+s_2x_2^2+s_3x_3^2)^{j+1}&=(m_{11}x_1^2+m_{12}x_2^2+m_{13}x_3^2)f+(m_{21}x_1^2+m_{22}x_2^2+m_{23}x_3^2)g \\
&+(m_{31}x_1^2+m_{32}x_2^2+m_{33}x_3^2)h\\
&=\partial_{\mathcal{A}}(x_1)f+\partial_{\mathcal{A}}(x_2)g+\partial_{\mathcal{A}}(x_3)h
\end{align*}
and
\begin{align*}
\partial_{\mathcal{A}}(x_1x_2x_3u)
&=(m_{11}x_1^2+m_{12}x_2^2+m_{13}x_3^2)x_2x_3u-x_1(m_{21}x_1^2+m_{22}x_2^2+m_{23}x_3^2)x_3g \\
&+x_1x_2(m_{31}x_1^2+m_{32}x_2^2+m_{33}x_3^2)u=0.
\end{align*}
Therefore, $(s_1x_1^2+s_2x_2^2+s_3x_3^2)^{j+1}$ is in the graded ideal $(\partial_{\mathcal{A}}(x_1),\partial_{\mathcal{A}}(x_2),\partial_{\mathcal{A}}(x_3))$ of $k[x_1^2,x_2^2,x_3^2]$. By Lemma \ref{prime}, $(\partial_{\mathcal{A}}(x_1),\partial_{\mathcal{A}}(x_2),\partial_{\mathcal{A}}(x_3))$ is a graded prime ideal of $k[x_1^2,x_2^2,x_3^2]$. So $s_1x_1^2+s_2x_2^2+s_3x_3^2\in (\partial_{\mathcal{A}}(x_1),\partial_{\mathcal{A}}(x_2),\partial_{\mathcal{A}}(x_3))$. Hence there exist $b_1,b_2$ and $b_3$ in $k$ such that \begin{align*}
s_1x_1^2+s_2x_2^2+s_3x_3^2&=b_1\partial_{\mathcal{A}}(x_1)+b_2\partial_{\mathcal{A}}(x_2)+b_3\partial_{\mathcal{A}}(x_3)\\
                                &=\partial_{\mathcal{A}}(b_1x_1+b_2x_2+b_3x_3).
\end{align*}
But this contradicts with the fact that $s_1x_1^2+s_2x_2^2+s_3x_3^2\not\in B^2(\mathcal{A})$, which we have proved above.
Thus $(s_1x_1^2+s_2x_2^2+s_3x_3^2)^{j+1}\not\in B^{2j+2}(\mathcal{A})$, for any $j\ge 1$.

If $(t_1x_1+t_2x_2+t_3x_3)(s_1x_1^2+s_2x_2^2+s_3x_3^2)^j\not\in B^{2j+1}(\mathcal{A})$, then by Lemma \ref{coboundary}, we have $$(t_1x_1+t_2x_2+t_3x_3)(s_1x_1^2+s_2x_2^2+s_3x_3^2)^j=\partial_{\mathcal{A}}[x_1x_2f+x_1x_3g+x_2x_3h],$$ where $f, g$ and $h$ are all linear combinations of monomials with non-negative even exponents.
Then
\begin{align*}
&(t_1x_1+t_2x_2+t_3x_3)(s_1x_1^2+s_2x_2^2+s_3x_3^2)^j=\partial_{\mathcal{A}}[x_1x_2f+x_1x_3g+x_2x_3h]\\
&=(m_{11}x_1^2+m_{12}x_2^2+m_{13}x_3^2)x_2f-x_1(m_{21}x_1^2+m_{22}x_2^2+m_{23}x_3^2)f\\
                                    &+(m_{11}x_1^2+m_{12}x_2^2+m_{13}x_3^2)x_3g-x_1(m_{31}x_1^2+m_{32}x_2^2+m_{33}x_3^2)g \\
                                    &+(m_{21}x_1^2+m_{22}x_2^2+m_{23}x_3^2)x_3h-x_2(m_{31}x_1^2+m_{32}x_2^2+m_{33}x_3^2)h\\
                       &=-x_1[(m_{21}x_1^2+m_{22}x_2^2+m_{23}x_3^2)f+(m_{31}x_1^2+m_{32}x_2^2+m_{33}x_3^2)g]\\
                                    &+x_2[(m_{11}x_1^2+m_{12}x_2^2+m_{13}x_3^2)f-(m_{31}x_1^2+m_{32}x_2^2+m_{33}x_3^2)h]\\
                                    &+x_3[(m_{21}x_1^2+m_{22}x_2^2+m_{23}x_3^2)h+(m_{11}x_1^2+m_{12}x_2^2+m_{13}x_3^2)g]\\
                                    &=x_1[-\partial_{\mathcal{A}}(x_2)f-\partial_{\mathcal{A}}(x_3)g]+x_2[\partial_{\mathcal{A}}(x_1)f-\partial_{\mathcal{A}}(x_3)h]+x_3[\partial_{\mathcal{A}}(x_2)h+\partial_{\mathcal{A}}(x_1)g].
 \end{align*}
This implies
\begin{align*}
\begin{cases}
t_1(s_1x_1^2+s_2x_2^2+s_3x_3^2)^j=-\partial_{\mathcal{A}}(x_2)f-\partial_{\mathcal{A}}(x_3)g=\partial_{\mathcal{A}}(-x_2f-x_3g)\\
t_2(s_1x_1^2+s_2x_2^2+s_3x_3^2)^j=\partial_{\mathcal{A}}(x_1)f-\partial_{\mathcal{A}}(x_3)h=\partial_{\mathcal{A}}(x_1f-x_3h)\\
t_3(s_1x_1^2+s_2x_2^2+s_3x_3^2)^j=\partial_{\mathcal{A}}(x_2)h+\partial_{\mathcal{A}}(x_1)g=\partial_{\mathcal{A}}(x_2h+x_1g).
\end{cases}
\end{align*}
Since $(t_1,t_2,t_3)^T\neq  0$, there is at least one non-zero $t_i, i\in \{1,2,3\}.$ Then we get that $(s_1x_1^2+s_2x_2^2+s_3x_3^2)^j\in B^{2j}(\mathcal{A})$. This contradicts with the proved fact that $(s_1x_1^2+s_2x_2^2+s_3x_3^2)^j\not\in B^{2j}(\mathcal{A})$ for any $j\ge 1$.

Then we can reach a conclusion that $$k[\lceil t_1x_1 +t_2x_2+t_3x_3\rceil,\lceil s_1x_1^2+s_2x_2^2+s_3x_3^2\rceil ]/(\lceil t_1x_1 +t_2x_2+t_3x_3\rceil^2)$$ is a subalgebra of $H(\mathcal{A})$.
\end{proof}

\section{Computations of $H(\mathcal{A})$}\label{cohomology}
Let $\mathcal{A}$ be a $3$-dimensional DG Sklyanin algebra with $\mathcal{A}^{\#}=S_{a,a,0}$ and $\partial_{\mathcal{A}}$ is defined by a matrix
$M\in M_3(k)$.  Note that $\mathcal{A}$ is just the DG algebra $\mathcal{A}_{\mathcal{O}_{-1}(k^3)}(M)$, which is systematically studied in \cite{MWZ}.
In this section, we will compute $H(\mathcal{A})$ case by case. When $r(M)=3$, we have the following proposition.
\begin{prop}\label{rank3}
If $M=(m_{ij})_{3\times 3}\in \mathrm{GL}_3(k)$, then $H(\mathcal{A})=k$.
\end{prop}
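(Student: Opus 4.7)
The plan is to identify $(\mathcal{A},\partial_{\mathcal{A}})$ with the Koszul complex of $(\ell_1,\ell_2,\ell_3)$ over the polynomial ring $R:=k[u_1,u_2,u_3]$, where $u_i:=x_i^2$ and $\ell_i:=\partial_{\mathcal{A}}(x_i)=\sum_j m_{ij}u_j$. By Lemma~\ref{centcocy} each $u_i$ is a central cocycle, so $R$ sits inside $\mathcal{A}$ as a DG subalgebra with zero differential, and the Leibniz rule (together with $|u_i|=2$ being even) makes $\partial_{\mathcal{A}}$ into an $R$-linear endomorphism of $\mathcal{A}$.

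First I would establish that $\mathcal{A}$ is free of rank $8$ over $R$. Writing any basis monomial of $\mathcal{A}^{\#}$ uniquely as $x_1^{a_1}x_2^{a_2}x_3^{a_3}=u_1^{\lfloor a_1/2\rfloor}u_2^{\lfloor a_2/2\rfloor}u_3^{\lfloor a_3/2\rfloor}\cdot x_1^{\epsilon_1}x_2^{\epsilon_2}x_3^{\epsilon_3}$ with $\epsilon_i=a_i\bmod 2$ produces a direct sum decomposition
\[
\mathcal{A}\;\cong\;\bigoplus_{I\subseteq\{1,2,3\}} R\cdot x_I\;\cong\;R\otimes_k \Lambda(x_1,x_2,x_3)
\]
of graded $R$-modules (different parity patterns $(\epsilon_1,\epsilon_2,\epsilon_3)$ pick out disjoint monomials). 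A short computation using centrality of the $\ell_i$ then shows that in this presentation $\partial_{\mathcal{A}}$ is precisely the Koszul differential on the sequence $(\ell_1,\ell_2,\ell_3)$: $\partial(x_i)=\ell_i$, $\partial(x_ix_j)=\ell_i x_j-\ell_j x_i$ for $i<j$, and $\partial(x_1x_2x_3)=\ell_1x_2x_3-\ell_2x_1x_3+\ell_3x_1x_2$. Hence $(\mathcal{A},\partial_{\mathcal{A}})$ coincides with the Koszul complex $K(\ell_1,\ell_2,\ell_3;R)$, with the cochain grading on $\mathcal{A}$ related to the bigrading on $K$ by $d=2n+k$, where $n$ is the $R$-internal degree and $k$ is the exterior (Koszul) degree.

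Next I would invoke regularity. Since $M\in\mathrm{GL}_3(k)$, the linear forms $\ell_1,\ell_2,\ell_3$ form a $k$-basis of $ku_1\oplus ku_2\oplus ku_3$, generate the same ideal as $u_1,u_2,u_3$ in $R$, and constitute a regular sequence (any linear coordinate system on a polynomial ring does). By the standard Koszul criterion, $K(\ell_1,\ell_2,\ell_3;R)$ is a free $R$-resolution of $R/(\ell_1,\ell_2,\ell_3)=R/(u_1,u_2,u_3)=k$, so its homology is $k$ concentrated in chain degree $0$ and $R$-internal degree $0$. Under the identification $d=2n+k$ this unique non-zero class lies in $\mathcal{A}$-degree $0$, giving $H^0(\mathcal{A})=k$ and $H^d(\mathcal{A})=0$ for every $d\ge 1$, which is the claim.

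The only mild obstacle I anticipate is bookkeeping the two gradings on $K$ versus the single cochain grading on $\mathcal{A}$ when reading off the conclusion; all the genuine content is packaged in two observations, namely that the central cocycles $u_i$ give $\mathcal{A}$ the structure of a DG $R$-module, and that invertibility of $M$ upgrades $(u_1,u_2,u_3)$ to the equivalent regular system of parameters $(\ell_1,\ell_2,\ell_3)$.
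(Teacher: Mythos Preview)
Your proof is correct and takes a genuinely different route from the paper.

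The paper's argument is entirely computational: it verifies $H^1(\mathcal{A})=H^2(\mathcal{A})=0$ by hand, then introduces the DG ideal $I=(x_1^2,x_2^2,x_3^2)$, computes $H^3(I)$ and $H^4(I)$ explicitly, checks that the connecting maps $\delta^2,\delta^3$ in the long exact sequence for $0\to I\to\mathcal{A}\to\Lambda(x_1,x_2,x_3)\to 0$ are isomorphisms (yielding $H^3(\mathcal{A})=H^4(\mathcal{A})=0$), and finally uses the auxiliary Lemma~\ref{twohg} on the subquotients $(x_1^2,x_2^2)/(x_1^2)$ and $I/(x_1^2,x_2^2)$ to identify $H^i(\mathcal{A})\cong H^i((x_1^2))=\lceil x_1^2\rceil H^{i-2}(\mathcal{A})$ for $i\ge 5$, closing the loop by induction. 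Your approach bypasses all of this by recognising that the $R$-module decomposition $\mathcal{A}\cong R\otimes_k\Lambda(x_1,x_2,x_3)$ together with $R$-linearity of $\partial_{\mathcal{A}}$ exhibits $\mathcal{A}$ as the Koszul complex $K(\ell_1,\ell_2,\ell_3;R)$; invertibility of $M$ makes the $\ell_i$ a system of parameters, hence a regular sequence, and acyclicity is then a one-line citation. The grading check $d=2n+k$ and the observation that $H_0(K)=R/(\ell_1,\ell_2,\ell_3)=k$ sits in internal degree $0$ are both fine. What you gain is brevity and a statement that visibly generalises to $n$ variables; what the paper's approach gains is that the same filtration-and-long-exact-sequence machinery is reused verbatim in the $r(M)=1,2$ cases treated later, where no clean Koszul identification is available.
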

\begin{proof}
It suffices to show that $H^i(\mathcal{A})=0$ when $i\neq 0$.
  If $l_1x_1+l_2x_2+l_3x_3 \in Z^1(\mathcal{A})$, then
\begin{align*}
0=\partial_{\mathcal{A}}(l_1x_1+l_2x_2+l_3x_3)=(l_1,l_2,l_3)M\left(
                                                             \begin{array}{c}
                                                               x_1^2 \\
                                                               x_2^2 \\
                                                               x_3^2 \\
                                                             \end{array}
                                                           \right),
\end{align*}
which implies that $(l_1,l_2,l_3)M=0$ and hence $M^T\left(
                                                      \begin{array}{c}
                                                        l_1 \\
                                                        l_2 \\
                                                        l_3 \\
                                                      \end{array}
                                                    \right)=0$. Then each $l_i=0$ since $r(M^T)=3$. So $Z^1(\mathcal{A})=0$ and $H^1(\mathcal{A})=0$.
Since $\partial_{\mathcal{A}}$ is a monomorphism, we have $\dim_k B^2(\mathcal{A})=3$ and $B^2(\mathcal{A})=kx_1^2\oplus kx_2^2\oplus kx_3^2$.
We claim $Z^2(\mathcal{A})=B^2(\mathcal{A})$. It suffices to show $(kx_1x_2\oplus kx_1x_3\oplus kx_2x_3) \bigcap Z^2(\mathcal{A})=0$
since $$\mathcal{A}^2=kx_1^2\oplus kx_2^2\oplus kx_3^2\oplus kx_1x_2\oplus kx_1x_3\oplus kx_2x_3.$$
For any $c_{12}x_1x_2+c_{13}x_1x_3+c_{23}x_2x_3\in Z^2(\mathcal{A})$, we have
\begin{align*}
0&=\partial_{\mathcal{A}}[c_{12}x_1x_2+c_{13}x_1x_3+c_{23}x_2x_3]\\
&=c_{12}(m_{11}x_1^2+m_{12}x_2^2+m_{13}x_3^2)x_2-c_{12}x_1(m_{21}x_1^2+m_{22}x_2^2+m_{23}x_3^2)\\
&+c_{13}(m_{11}x_1^2+m_{12}x_2^2+m_{13}x_3^2)x_3-c_{13}x_1(m_{31}x_1^2+m_{32}x_2^2+m_{33}x_3^2)\\
&+c_{23}(m_{21}x_1^2+m_{22}x_2^2+m_{23}x_3^2)x_3-c_{23}x_2(m_{31}x_1^2+m_{32}x_2^2+m_{33}x_3^2)\\
&=(-c_{12}m_{21}-c_{13}m_{31})x_1^3+(c_{12}m_{12}-c_{23}m_{32})x_2^3+(c_{13}m_{13}+c_{23}m_{23})x_3^3\\
&+(c_{12}m_{11}-c_{23}m_{31})x^2y-(c_{12}m_{22}+c_{13}m_{32} )x_1x_2^2-(c_{12}m_{23}+c_{13}m_{33})x_1x_3^2\\
&+(c_{13}m_{11}+c_{23}m_{21})x_1^2x_3+(c_{13}m_{12}+c_{23}m_{22})x_2^2x_3+(c_{12}m_{13}-c_{23}m_{33})x_2x_3^2.
\end{align*}
Then
\begin{align*}
\begin{cases}
c_{12}m_{21}+c_{13}m_{31}=0\\
c_{12}m_{12}-c_{23}m_{32}=0\\
c_{13}m_{13}+c_{23}m_{23}=0\\
c_{12}m_{11}-c_{23}m_{31}=0\\
c_{12}m_{22}+c_{13}m_{32}=0\\
c_{12}m_{23}+c_{13}m_{33}=0\\
c_{13}m_{11}+c_{23}m_{21}=0\\
c_{13}m_{12}+c_{23}m_{22}=0\\
c_{12}m_{13}-c_{23}m_{33}=0
\end{cases}\Leftrightarrow  \begin{cases}
c_{12}m_{21}+c_{13}m_{31}=0\\
c_{12}m_{22}+c_{13}m_{32}=0\\
c_{12}m_{23}+c_{13}m_{33}=0\\
c_{12}m_{11}-c_{23}m_{31}=0\\
c_{12}m_{12}-c_{23}m_{32}=0\\
c_{12}m_{13}-c_{23}m_{33}=0\\
c_{13}m_{11}+c_{23}m_{21}=0\\
c_{13}m_{12}+c_{23}m_{22}=0\\
c_{13}m_{13}+c_{23}m_{23}=0
\end{cases}\Leftrightarrow \begin{cases}
c_{12}=0\\
c_{13}=0\\
c_{23}=0
\end{cases}
\end{align*}
since $r(M)=3$.  So $(kx_1x_2\oplus kx_1x_3\oplus kx_2x_3) \bigcap Z^2(\mathcal{A})=0$. Thus $H^2(\mathcal{A})=0$.

Since $x_1^2,x_2^2$ and $x_3^2$ are central and cocycle elements in $\mathcal{A}$, they generate a DG ideal $I=(x_1^2,x_2^2,x_3^2)$ of $\mathcal{A}$.
One sees that $\mathcal{A}/I=\bigwedge(x_1,x_2,x_3)$ with $\partial_{\mathcal{A}/I}=0$.
The long exact sequence of cohomologies induced from the shot exact sequence
$$0\to I\stackrel{\iota}{\to} \mathcal{A}\stackrel{\varepsilon}{\to} \mathcal{A}/I\to 0 $$ contains $(\clubsuit)$:
\begin{align*}
& 0\to H^2(\mathcal{A}/I)=k(\lceil x_1\wedge x_2\rceil)\oplus k(\lceil x_1\wedge x_3\rceil)\oplus k(\lceil x_2\wedge x_3\rceil)\stackrel{\delta^2}{\to}
H^3(I)\stackrel{H^3(\iota)}{\to}\\ & H^3(\mathcal{A})\stackrel{H^3(\varepsilon)}{\to} H^3(\mathcal{A}/I)=k(\lceil x_1\wedge x_2\wedge x_3\rceil)
\stackrel{\delta^3}{\to} H^4(I)\stackrel{H^4(\iota)}{\to} H^4(\mathcal{A})\to
H^4(\mathcal{A}/I)=0  \\
&
\to  H^5(I) \stackrel{H^5(\iota)}{\to} H^5(\mathcal{A})\to 0\to \cdots 0\to H^i(I)\stackrel{H^i(\iota)}{\to} H^i(\mathcal{A})\to 0\to \cdots.
\end{align*}
We claim that $H^3(I)=k\lceil \omega_1\rceil \oplus k\lceil \omega_2\rceil\oplus k\lceil \omega_3\rceil$, where \begin{align*}
 \omega_1= -m_{21}x_1^3+m_{11}x_1^2x_2-m_{22}x_1x_2^2+m_{12}x_2^3-m_{23}x_1x_3^2+m_{13}x_2x_3^2                    \\
 \omega_2= -m_{31}x_1^3+m_{11}x_1^2x_3-m_{32}x_1x_2^2+m_{12}x_2^2x_3-m_{33}x_1x_3^2+m_{13}x_3^3                         \\
 \omega_3= -m_{31}x_1^2x_2+m_{21}x_1^2x_3-m_{32}x_2^3+m_{22}x_2^2x_3-m_{33}x_2x_3^2+m_{23}x_3^3.
 \end{align*}
Any cocycle element $\Omega \in Z^3(I)$ can be written as
$$\Omega=(q_1x_1+q_2x_2+q_3x_3)x_1^2+(q_4x_1+q_5x_2+q_6x_3)x_2^2+(q_7x_1+q_8x_2+q_9x_3)x_3^2,$$
where each $q_i\in k$, $1\le i\le 9$. Then
\begin{align*}
0&=\partial_{I}(z)\\
&=(q_1,q_2,q_3)M\left(
                                  \begin{array}{c}
                                    x_1^2 \\
                                    x_2^2 \\
                                    x_3^2\\
                                  \end{array}
                                \right)x_1^2+(q_4,q_5,q_6)M\left(
                                                             \begin{array}{c}
                                                               x_1^2 \\
                                                               x_2^2 \\
                                                               x_3^2 \\
                                                             \end{array}
                                                           \right)x_2^2+(q_7,q_8,q_9)M\left(
                                                                                        \begin{array}{c}
                                                                                          x_1^2 \\
                                                                                          x_2^2\\
                                                                                          x_3^2 \\
                                                                                        \end{array}
                                                                                      \right)x_3^2\\
                                                                                      &=(q_1,q_2,q_3)M\left(
                                  \begin{array}{c}
                                    x_1^4 \\
                                    x_1^2x_2^2 \\
                                    x_1^2x_3^2\\
                                  \end{array}
                                \right)+(q_4,q_5,q_6)M\left(
                                                             \begin{array}{c}
                                                               x_1^2x_2^2 \\
                                                               x_2^4 \\
                                                               x_2^2x_3^2 \\
                                                             \end{array}
                                                           \right)+(q_7,q_8,q_9)M\left(
                                                                                        \begin{array}{c}
                                                                                          x_1^2x_3^2 \\
                                                                                          x_2^2x_3^2\\
                                                                                          x_3^4 \\
                                                                                        \end{array}
                                                                                      \right)
\end{align*}
and hence
$$
\begin{cases}
(q_1,q_2,q_3)M\left(
                                  \begin{array}{c}
                                    1 \\
                                    0 \\
                                    0\\
                                  \end{array}
                                \right)=0 \\
(q_4,q_5,q_6)M\left(
                                                             \begin{array}{c}
                                                               0\\
                                                               1\\
                                                               0\\
                                                             \end{array}
                                                           \right)=0\\
(q_7,q_8,q_9)M\left(
                                                                                        \begin{array}{c}
                                                                                          0\\
                                                                                          0\\
                                                                                          1\\
                                                                                        \end{array}
                                                                                      \right)=0\\
(q_1,q_2,q_3)M\left(
                                  \begin{array}{c}
                                    0\\
                                    1 \\
                                    0\\
                                  \end{array}
                                \right)+(q_4,q_5,q_6)M\left(
                                                             \begin{array}{c}
                                                               1\\
                                                               0\\
                                                               0\\
                                                             \end{array}
                                                           \right)=0\\
(q_1,q_2,q_3)M\left(
                                  \begin{array}{c}
                                    0\\
                                    0 \\
                                    1\\
                                  \end{array}
                                \right)+(q_7,q_8,q_9)M\left(
                                                                                        \begin{array}{c}
                                                                                          1\\
                                                                                          0\\
                                                                                          0\\
                                                                                        \end{array}
                                                                                      \right)=0\\

(q_4,q_5,q_6)M\left(
                                                             \begin{array}{c}
                                                               0\\
                                                               0\\
                                                               1\\
                                                             \end{array}
                                                           \right)+(q_7,q_8,q_9)M\left(
                                                                                        \begin{array}{c}
                                                                                          0\\
                                                                                          1\\
                                                                                          0\\
                                                                                        \end{array}                                                                             \right)=0,
\end{cases}
$$
which is equivalent to
$$
\left(
  \begin{array}{ccccccccc}
    m_{11} & m_{21} & m_{31} & 0 & 0 & 0 & 0 & 0 & 0 \\
    0 & 0 & 0 & m_{12} & m_{22} & m_{32} & 0 & 0 & 0 \\
    0 & 0 & 0 & 0 & 0 & 0 & m_{13} & m_{23} & m_{33} \\
    m_{12} & m_{22} & m_{32} & m_{11} & m_{21} & m_{31} & 0 & 0 & 0 \\
    m_{13} & m_{23} & m_{33} & 0 & 0 & 0 & m_{11} & m_{21} & m_{31} \\
    0 & 0 & 0 & m_{13} & m_{23} & m_{33} & m_{12} & m_{22} & m_{32} \\
  \end{array}
\right)\left(
         \begin{array}{c}
           q_1 \\
           q_2\\
           q_3 \\
           q_4 \\
           q_5 \\
           q_6 \\
           q_7 \\
           q_8 \\
           q_9 \\
         \end{array}
       \right)=0.$$
Since $r(M)=3$, one sees that $$r\left(
  \begin{array}{ccccccccc}
    m_{11} & m_{21} & m_{31} & 0 & 0 & 0 & 0 & 0 & 0 \\
    0 & 0 & 0 & m_{12} & m_{22} & m_{32} & 0 & 0 & 0 \\
    0 & 0 & 0 & 0 & 0 & 0 & m_{13} & m_{23} & m_{33} \\
    m_{12} & m_{22} & m_{32} & m_{11} & m_{21} & m_{31} & 0 & 0 & 0 \\
    m_{13} & m_{23} & m_{33} & 0 & 0 & 0 & m_{11} & m_{21} & m_{31} \\
    0 & 0 & 0 & m_{13} & m_{23} & m_{33} & m_{12} & m_{22} & m_{32} \\
  \end{array}
\right)=6.$$
Hence $\dim_k Z^3(I)=3$. On the other hand,
$$
\begin{cases}
\partial_{\mathcal{A}}(x_1x_2)=\omega_1\\
\partial_{\mathcal{A}}(x_1x_3)=\omega_2\\
\partial_{\mathcal{A}}(x_2x_3)=\omega_3
\end{cases}
$$ implies that $\partial_I(\omega_i)=0,i=1,2,3$. Then $Z^3(I)=k\omega_1\oplus k\omega_2\oplus k\omega_3$ and hence
$H^3(I)=k\lceil \omega_1\rceil \oplus k\lceil \omega_2\rceil \oplus k\lceil \omega_3\rceil $ since $B^3(I)=0$.
The definition of connecting homomorphism implies that
\begin{align*}
\delta^2(\lceil x_1\wedge x_2\rceil)&=\lceil \omega_1 \rceil\\
\delta^2(\lceil x_1\wedge x_3\rceil)&=\lceil \omega_2 \rceil\\
\delta^2(\lceil x_2\wedge x_3\rceil)&=\lceil \omega_3 \rceil.                                                            \end{align*}
Hence $\delta^2$ is a bijection. By the long exact sequence $(\clubsuit)$, one sees that $H^3(\mathcal{A})=0$.

Since $B^2(\mathcal{A})=kx_1^2\oplus kx_2^2\oplus kx_3^2$, one sees that $$B^4(I)=kx_1^4\oplus kx_1^2x_2^2\oplus kx_1^2x_3^2\oplus kx_2^4\oplus kx_2^2x_3^2\oplus kx_3^4.$$ For any $\Omega \in Z^4(I)\bigcap (I^4/B^4(I))$, we can write it as
\begin{align*}
\Omega &=(r_1x_1x_2+r_2x_1x-3+r_3x_2x_3)x_1^2+(r_4x_1x_2+r_5x_1x_3+r_6x_2x_3)x_2^2\\
&+(r_7x_1x_2+r_8x_1x_3+r_9x_2x_3)x_3^2,
\end{align*}
where $r_i\in k, 1\le i\le 9$. Then
\begin{align*}
0=&\partial_I(\Omega)=[r_1(m_{11}x_1^2+m_{12}x_2^2+m_{13}x_3^2)x_2-r_1x_1(m_{21}x_1^2+m_{22}x_2^2+m_{23}x_3^2)]x_1^2 \\
&+[r_2(m_{11}x_1^2+m_{12}x_2^2+m_{13}x_3^2)x_3-r_2x_1(m_{31}x_1^2+m_{32}x_2^2+m_{33}x_3^2)]x_1^2\\
&+[r_3(m_{21}x_1^2+m_{22}x_2^2+m_{23}x_3^2)x_3-r_3x_2(m_{31}x_1^2+m_{32}x_2^2+m_{33}x_3^2)]x_1^2\\
&+[r_4(m_{11}x_1^2+m_{12}x_2^2+m_{13}x_3^2)x_2-r_4x_1(m_{21}x_1^2+m_{22}x_2^2+m_{23}x_3^2)]x_2^2\\
&+[r_5(m_{11}x_1^2+m_{12}x_2^2+m_{13}x_3^2)x_3-r_5x_1(m_{31}x_1^2+m_{32}x_2^2+m_{33}x_3^2)]x_2^2\\
&+[r_6(m_{21}x_1^2+m_{22}x_2^2+m_{23}x_3^2)x_3-r_6x_2(m_{31}x_1^2+m_{32}x_2^2+m_{33}x_3^2)]x_2^2\\
&+[r_7(m_{11}x_1^2+m_{12}x_2^2+m_{13}x_3^2)x_2-r_7x_1(m_{21}x_1^2+m_{22}x_2^2+m_{23}x_3^2)]x_3^2\\
&+[r_8(m_{11}x_1^2+m_{12}x_2^2+m_{13}x_3^2)x_3-r_8x_1(m_{31}x_1^2+m_{32}x_2^2+m_{33}x_3^2)]x_3^2\\
&+[r_9(m_{21}x_1^2+m_{22}x_2^2+m_{23}x_3^2)x_3-r_9x_2(m_{31}x_1^2+m_{32}x_2^2+m_{33}x_3^2)]x_3^2\\
&=-(r_1m_{21}+r_2m_{31})x_1^5+(r_4m_{12}-r_6m_{32})x_2^5+(r_8m_{13}+r_9m_{23})x_3^5\\
&+(r_1m_{11}-r_3m_{31})x_1^4x_2+(r_1m_{12}-r_3m_{32}+r_4m_{11}-r_6m_{31})x_1^2x_2^3\\
&+(r_1m_{13}-r_3m_{33}+r_7m_{11}-r_9m_{31})x_1^2x_2x_3^2 +(r_2m_{11}+r_3m_{21})x_1^4x_3\\
&-(r_1m_{22}+r_2m_{32}+r_4m_{21}+r_5m_{31})x_1^3x_2^2+(r_7m_{13}-r_9m_{33})x_2x_3^4 \\
&-(r_1m_{23}+r_2m_{33}+r_7m_{21}+r_8m_{31})x_1^3x_3^2-(r_4m_{22}+r_5m_{32})x_1x_2^4\\
&+(r_2m_{12}+r_3m_{22}+r_5m_{11}+r_6m_{21})x_1^2x_2^2x_3+(r_5m_{12}+r_6m_{22})x_2^4x_3\\
&+(r_2m_{13}+r_3m_{23}+r_8m_{11}+r_9m_{21})x_1^2x_3^3-(r_7m_{23}+r_8m_{33})x_1x_3^4 \\
&-(r_4m_{23}+r_5m_{33}+r_7m_{22}+r_8m_{32})x_1x_2^2x_3^2\\
&+(r_7m_{12}-r_9m_{32}+r_4m_{13}-r_6m_{33})x_2^3x_3^2\\
&+(r_5m_{13}+r_6m_{23}+r_8m_{12}+r_9m_{22})x_2^2x_3^3
\end{align*}
and hence
\begin{align*}
\begin{cases}
r_1m_{21}+r_2m_{31}=0\\
r_1m_{11}-r_3m_{31}=0\\
r_2m_{11}+r_3m_{21}=0\\
r_4m_{22}+r_5m_{32}=0\\
r_4m_{12}-r_6m_{32}=0\\
r_5m_{12}+r_6m_{22}=0\\
r_7m_{23}+r_8m_{33}=0\\
r_7m_{13}-r_9m_{33}=0\\
r_8m_{13}+r_9m_{23}=0\\
r_1m_{22}+r_2m_{32}+r_4m_{21}+r_5m_{31}=0\\
r_1m_{12}-r_3m_{32}+r_4m_{11}-r_6m_{31}=0\\
r_1m_{23}+r_2m_{33}+r_7m_{21}+r_8m_{31}=0\\
r_1m_{13}-r_3m_{33}+r_7m_{11}-r_9m_{31}=0\\
r_2m_{12}+r_3m_{22}+r_5m_{11}+r_6m_{21}=0\\
r_2m_{13}+r_3m_{23}+r_8m_{11}+r_9m_{21}=0\\
r_4m_{23}+r_5m_{33}+r_7m_{22}+r_8m_{32}=0\\
r_4m_{13}-r_6m_{33}+r_7m_{12}-r_9m_{32}=0\\
r_5m_{13}+r_6m_{23}+r_8m_{12}+r_9m_{22}=0.
\end{cases}
\end{align*}
Since $r(M)=3$, one sees that the rank of the coefficient matrix
$$\left(
  \begin{array}{ccccccccc}
    m_{21} & m_{31} & 0 & 0 & 0 & 0 & 0 & 0 & 0 \\
    m_{11} & 0 & -m_{31} & 0 & 0 & 0 & 0 & 0 & 0 \\
    0 & m_{11} & m_{21} & 0 & 0 & 0 & 0 & 0 & 0 \\
    0 & 0 & 0 & m_{22} & m_{32} & 0 & 0 & 0 & 0 \\
    0 & 0 & 0 & m_{12} & 0 & -m_{32} & 0 & 0 & 0 \\
    0 & 0 & 0 & 0 & m_{12} & m_{22} & 0 & 0 & 0 \\
    0 & 0 & 0 & 0 & 0 & 0 & m_{23} & m_{33} & 0 \\
    0 & 0 & 0 & 0 & 0 & 0 & m_{13} & 0 & -m_{33} \\
    0 & 0 & 0 & 0 & 0 & 0 & 0 & m_{13} & m_{23} \\
    m_{22} & m_{32} & 0 & m_{21} & m_{31} & 0 & 0 & 0 & 0 \\
    m_{12} & 0 & -m_{32} & m_{11} & 0 & -m_{31} & 0 & 0 & 0 \\
    m_{23} & m_{33} & 0 & 0 & 0 & 0 & m_{21} & m_{31} & 0 \\
    m_{13} & 0 & -m_{33} & 0 & 0 & 0 & m_{11} & 0 & -m_{31} \\
    0 & m_{12} & m_{22} & 0 & m_{11}& m_{21} & 0 & 0 & 0 \\
    0 & m_{13} & m_{23} & 0 & 0 & 0 & 0 & m_{11} & m_{21} \\
    0 & 0 & 0 & m_{23} & m_{33} & 0 & m_{22} & m_{32} & 0 \\
    0 & 0 & 0 & m_{13} & 0 & -m_{33} & m_{12} & 0 & -m_{32} \\
    0 & 0 & 0 & 0 & m_{13} & m_{23} & 0 & m_{12} & m_{22}
  \end{array}
\right)$$
is $8$. Therefore, $\dim_k [Z^4(I)\bigcap (I^4/B^4(I))]=1$.  On the other hand,
\begin{align*}
\partial_{\mathcal{A}}(x_1x_2x_3) &=(m_{11}x_1^2+m_{12}x_2^2+m_{13}x_3^2)x_2x_3-(m_{21}x_1^2+m_{22}x_2^2+m_{23}x_3^2)x_1x_3 \\
& +x_1x_2(m_{31}x_1^2+m_{32}x_2^2+m_{33}x_3^2)\\
&=x_1^2(m_{11}x_2x_3-m_{21}x_1x_3+m_{31}x_1x_2)+x_2^2(m_{12}x_2x_3-m_{22}x_1x_3+m_{32}x_1x_2)\\
                       & +z^2(m_{13}x_2x_3-m_{23}x_1x_3+m_{33}x_1x_2)
\end{align*}
We have \begin{align*}
\beta &=x_1^2(m_{11}x_2x_3-m_{21}x_1x_3+m_{31}x_1x_2)+x_2^2(m_{12}x_2x_3-m_{22}x_1x_3+m_{32}x_1x_2)\\
    &+x_3^2(m_{13}x_2x_3-m_{23}x_1x_3+m_{33}x_1x_2) \in Z^4(I)\bigcap (I^4/B^4(I))
    \end{align*}
and hence $H^4(I)=k\lceil \beta\rceil$. By the definition of connecting homomorphism, we have $\delta^3(\lceil x_1\wedge x_2\wedge x_3\rceil)=\lceil \beta\rceil \neq 0$ and hence $\delta^3$ is an isomorphism. By the cohomology long exact sequence $(\clubsuit)$, we get $H^4(\mathcal{A})=0$.
 Since $H^i(A/I)=0$ for any $i\ge 4$, we have $H^{i+1}(I)\cong H^{i+1}(\mathcal{A})$ by the cohomology long exact sequence $(\clubsuit)$.

 Since
\begin{align*}
0\neq |M|=m_{11}\left|
                          \begin{array}{cc}
                            m_{22} & m_{23} \\
                            m_{32} & m_{33} \\
                          \end{array}
                        \right|-m_{12}\left|
                                        \begin{array}{cc}
                                          m_{21} & m_{23} \\
                                          m_{31} & m_{33} \\
                                        \end{array}
                                      \right|+m_{13}\left|
                                                      \begin{array}{cc}
                                                        m_{21} & m_{22} \\
                                                        m_{31} & m_{32} \\
                                                      \end{array}
                                                    \right|,
\end{align*}
 there is at least one non-zero in
 $$\bigg\{\left|
                          \begin{array}{cc}
                            m_{22} & m_{23} \\
                            m_{32} & m_{33} \\
                          \end{array}
                        \right|, \left|
                                        \begin{array}{cc}
                                          m_{21} & m_{23} \\
                                          m_{31} & m_{33} \\
                                        \end{array}
                                      \right|, \left|
                                                      \begin{array}{cc}
                                                        m_{21} & m_{22} \\
                                                        m_{31} & m_{32} \\
                                                      \end{array}
                                                    \right|\bigg \}.$$
 Without the loss of generality, we assume that $\left|
                                                      \begin{array}{cc}
                                                        m_{22} & m_{23} \\
                                                        m_{32} & m_{33} \\
                                                      \end{array}
                                                    \right|\neq 0$ and $m_{33}\neq 0$.
 Let $Q_1=(x_1^2,x_2^2)/(x_1^2)$ and $ Q_2=I/(x_1^2,x_2^2)$. By Lemma \ref{twohg}, we have
 $$H^i(Q_1)=\begin{cases}
 k\lceil \bar{x_2}^2\rceil,\,\, \text{if}\,\, i=2\\
 k\lceil \bar{x_1}\bar{x_2}^2+\bar{x_2}^2(\frac{m_{13}m_{32}-m_{12}m_{33}}{m_{22}m_{33}-m_{23}m_{32}}\bar{x_2}+\frac{m_{12}m_{23}-m_{13}m_{22}}{m_{22}m_{33}-m_{23}m_{32}}\bar{x_3})\rceil, \,\,\text{if}\,\,i=3 \\
 0,\,\, \text{if}\,\, i\ge 4
 \end{cases}$$
and
 $$
H^i(Q_2)=\begin{cases}
 k\lceil \bar{x_1}^2\rceil,\,\, \text{if}\,\, i=2\\
 k\lceil -m_{33}\bar{x_1}\bar{x_3}^2+m_{13}\bar{x_3}^3 \rceil \oplus k\lceil -m_{33}\bar{x_2}\bar{x_3}^2+m_{23}\bar{x_3}^3\rceil,\,\, \text{if}\,\, i=3\\
 k\lceil m_{23}\bar{x_1}\bar{x_3}^3-m_{13}\bar{x_2}\bar{x_3}^3-m_{33}\bar{x_1}\bar{x_2}\bar{x_3}^2\rceil,  \,\,\text{if}\,\, i=4 \\
 0,\,\,\text{if}\,\, i\ge 5.
\end{cases}$$
The cohomology long exact sequence induced from the short exact sequence
$$0\to (x_1^2,x_2^2)\stackrel{\tau}{\to} I\stackrel{\pi}{\to} Q_2\to 0$$
contains \begin{align*}
\cdots \stackrel{H^4(\pi)}{\to} H^4(Q_2)\stackrel{\delta_4}{\to} H^5[(x_1^2,x_2^2)]\stackrel{H^5(\tau)}{\to}H^5(I)\stackrel{H^5(\pi)}{\to} H^5(Q_2)=0\stackrel{\delta^5}{\to} H^6[(x_1^2,x_2^2)]\\
\stackrel{H^6(\tau)}{\to}H^6(I)\stackrel{H^6(\pi)}{\to} H^6(Q_2)=0\to \cdots 0\to H^i[(x_1^2,x_2^2)]\stackrel{H^i(\tau)}{\to}H^i(I)\to 0\to \cdots.
\end{align*}
We have \begin{align*}
& \quad \partial_I(m_{23}x_1x_3^3-m_{13}x_2x_3^3-m_{33}x_1x_2x_3^2) \\
=&(m_{11}m_{23}-m_{13}m_{21})x_1^2x_3^3+(m_{21}m_{33}-m_{23}m_{31})x_1^3x_3^2\\
+&(m_{13}m_{31}-m_{11}m_{33})x_1^2x_2x_3^2
 +(m_{12}m_{23}-m_{13}m_{22})x_2^2x_3^3\\
+&(m_{33}m_{22}-m_{23}m_{32})x_1x_2^2x_3^2+(m_{13}m_{32}-m_{12}m_{33})x_2^3x_3^2\\
=&\Bigg[\left|
     \begin{array}{cc}
       m_{11} & m_{13} \\
       m_{21} & m_{23} \\
     \end{array}
   \right|x_3+\left|
                \begin{array}{cc}
                  m_{21} & m_{23}\\
                  m_{31} & m_{33} \\
                \end{array}
              \right|x_1-\left|
                           \begin{array}{cc}
                             m_{11} & m_{13} \\
                             m_{31} & m_{33} \\
                           \end{array}
                         \right|x_2\Bigg]x_1^2x_3^2\\
 +& \Bigg[\left|
     \begin{array}{cc}
       m_{12} & m_{13} \\
       m_{22} & m_{23} \\
     \end{array}
   \right|x_3+\left|
                \begin{array}{cc}
                  m_{22} & m_{23}\\
                  m_{32} & m_{33} \\
                \end{array}
              \right|x_1-\left|
                           \begin{array}{cc}
                             m_{12} & m_{13} \\
                             m_{32} & m_{33} \\
                           \end{array}
                         \right|x_2\Bigg]x_2^2x_3^2
\end{align*}
and
\begin{align*}
&\partial_{\mathcal{A}}\Bigg[\left|
     \begin{array}{cc}
       m_{11} & m_{13} \\
       m_{21} & m_{23} \\
     \end{array}
   \right|x_3+\left|
                \begin{array}{cc}
                  m_{21} & m_{23}\\
                  m_{31} & m_{33} \\
                \end{array}
              \right|x_1-\left|
                           \begin{array}{cc}
                             m_{11} & m_{13} \\
                             m_{31} & m_{33} \\
                           \end{array}
                         \right|x_2\Bigg]x_1^2\\
+&\partial_{\mathcal{A}}\Bigg[\left|
     \begin{array}{cc}
       m_{12} & m_{13} \\
       m_{22} & m_{23} \\
     \end{array}
   \right|x_3+\left|
                \begin{array}{cc}
                  m_{22} & m_{23}\\
                  m_{32} & m_{33} \\
                \end{array}
              \right|x_1-\left|
                           \begin{array}{cc}
                             m_{12} & m_{13} \\
                             m_{32} & m_{33} \\
                           \end{array}
                         \right|x_2\Bigg]x_2^2\\
=& -|M|x_2^2x_1^2+|M|x_1^2x_2^2=0.
\end{align*}
So \begin{align*}
\chi=&\Bigg[\left|
     \begin{array}{cc}
       m_{11} & m_{13} \\
       m_{21} & m_{23} \\
     \end{array}
   \right|x_3+\left|
                \begin{array}{cc}
                  m_{21} & m_{23}\\
                  m_{31} & m_{33} \\
                \end{array}
              \right|x_1-\left|
                           \begin{array}{cc}
                             m_{11} & m_{13} \\
                             m_{31} & m_{33} \\
                           \end{array}
                         \right|x_2\Bigg]x_1^2 \\
  + &\Bigg[\left|
     \begin{array}{cc}
       m_{12} & m_{13} \\
       m_{22} & m_{23} \\
     \end{array}
   \right|x_3+\left|
                \begin{array}{cc}
                  m_{22} & m_{23}\\
                  m_{32} & m_{33} \\
                \end{array}
              \right|x_1-\left|
                           \begin{array}{cc}
                             m_{12} & m_{13} \\
                             m_{32} & m_{33} \\
                           \end{array}
                         \right|x_2\Bigg]x_2^2\in Z^3(\mathcal{A}).
\end{align*}
Since we have proved $H^3(\mathcal{A})=0$, there exists $\omega \in \mathcal{A}$ such that $\partial_{\mathcal{A}}(\omega)=\chi$.
Then
\begin{align*}
\quad \partial_I(m_{23}x_1x_3^3-m_{13}x_2x_3^3-m_{33}x_1x_2x_3^2)=\chi x_3^2=\partial_{\mathcal{A}}(\omega)x_3^2
\end{align*}
and hence $\delta^4(\lceil m_{23}x_1x_3^3-m_{13}x_2x_3^3-m_{33}x_1x_2x_3^2 \rceil)=\lceil \partial_{\mathcal{A}}(\omega) x_3^2\rceil=0$ by the definition of connecting homomorphism. So $\delta^4=0$.  By the cohomology long exact sequence above, we have $H^i(I)\cong H^i[(x_1^2,x_2^2)], i\ge 5.$
The cohomology long exact sequence induced from the short exact sequence
$$0\to (x_1^2)\stackrel{\tau}{\to} (x_1^2,x_2^2)\stackrel{\phi}{\to} Q_1\to 0$$
contains \begin{align*}
\cdots 0 \stackrel{\delta^4}{\to} H^5((x_1^2)) \stackrel{H^5(\tau)}{\to} H^5((x_1^2,x_2^2))\stackrel{H^5(\phi)}{\to} H^5(Q_1)=0\stackrel{\delta^5}{\to} \\
\cdots 0\stackrel{\delta^{i-1}}{\to} H^i((x_1^2))\stackrel{H^i(\tau)}{\to}H^i((x_1^2,x_2^2))) \stackrel{H^i(\phi)}{\to} H^i(Q_1)=0\stackrel{\delta^{i}}{\to} \cdots.
\end{align*}
Hence $H^i((x_1^2))\cong H^i((x_1^2,x_2^2))$ for any $i\ge 5$. Then we get $$H^i((x_1^2))\cong H^i((x_1^2,x_2^2))\cong H^i(I)\cong H^i(\mathcal{A})$$
 for any $i\ge 5$.
Since $x_1^2$ is a central and cocycle element in $\mathcal{A}$, one sees that $H((x_1^2))=H(\mathcal{A})\lceil x_1^2\rceil $. We have shown that $H^i(\mathcal{A})=0$,  when $i=1,2,3,4$. Then we can inductively prove $H^i(\mathcal{A})=0$ for any $i\ge 1$.
\end{proof}

Now, let us consider the case $r(M)=2$. We have the following proposition.
\begin{prop}\label{rmtwoimc}
For $M\in M_3(k)$ with $r(M)=2$, let $k(s_1,s_2,s_3)^T$ and $k(t_1,t_2,t_3)^T$ be the solution spaces of  homogeneous linear equations $MX=0$ and $M^TX=0$, respectively.  Then $H(\mathcal{A})=k[\lceil t_1x_1 +t_2x_2+t_3x_3\rceil]$ if $s_1t_1^2+s_2t_2^2+s_3t_3^2\neq 0$; and  $H(\mathcal{A})$ equals to
$$k[\lceil t_1x_1 +t_2x_2+t_3x_3\rceil,\lceil s_1x_1^2+s_2x_2^2+s_3x_3^2\rceil ]/(\lceil t_1x_1 +t_2x_2+t_3x_3\rceil^2)$$ when $s_1t_1^2+s_2t_2^2+s_3t_3^2= 0$.
\end{prop}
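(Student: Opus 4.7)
The plan is to establish both claimed equalities by proving $\dim_k H^n(\mathcal{A}) \le 1$ for every $n \ge 0$, since the opposite inequality is already provided by Lemma \ref{second} (which exhibits the relevant powers of $\lceil t_1 x_1 + t_2 x_2 + t_3 x_3 \rceil$ and $\lceil s_1 x_1^2 + s_2 x_2^2 + s_3 x_3^2\rceil$ as non-trivial cohomology classes). So the containment that needs work is the one saying nothing else survives to cohomology.

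In degrees $n \le 2$ the computation is essentially inside the proof of Lemma \ref{second}: one has $Z^1(\mathcal{A}) = k(t_1 x_1 + t_2 x_2 + t_3 x_3)$ and $Z^2(\mathcal{A}) = kx_1^2 \oplus k x_2^2 \oplus k x_3^2$, combined with $\dim_k B^2(\mathcal{A}) = r(M) = 2$, giving the required one-dimensional $H^1$ and $H^2$. The split between Case 1 and Case 2 in degree $2$ amounts to choosing the unique surviving class, determined precisely by whether $s_1 t_1^2 + s_2 t_2^2 + s_3 t_3^2$ vanishes.

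For $n \ge 3$ I would run a parity-based analysis paralleling Proposition \ref{rank3}: expand a general cocycle in the PBW basis $\{x_1^a x_2^b x_3^c : a+b+c = n\}$ and sort the monomials by parities of $(a,b,c)$. Lemma \ref{coboundary} normalizes any coboundary to the standard form $x_1 f + x_2 g + x_3 h + x_1 x_2 x_3 u$ (even $n$) or $x_1 x_2 f + x_1 x_3 g + x_2 x_3 h$ (odd $n$) with $f,g,h,u \in k[x_1^2, x_2^2, x_3^2]$; Lemma \ref{rank5} pins down the rank of the relevant block of $\partial_{\mathcal{A}}$; and Lemma \ref{prime} shows that the ideal $(\partial_{\mathcal{A}} x_1, \partial_{\mathcal{A}} x_2, \partial_{\mathcal{A}} x_3) \subset k[x_1^2, x_2^2, x_3^2]$ is prime. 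Running the argument class by class should reduce any cocycle modulo coboundaries to a scalar multiple of the expected generator.

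The main obstacle is the consistency check that the leftover scalar vanishes for everything other than the expected generator: in Case 1 this amounts to showing $(t_1 x_1 + t_2 x_2 + t_3 x_3)^{2l} = (t_1^2 x_1^2 + t_2^2 x_2^2 + t_3^2 x_3^2)^l \notin B^{2l}(\mathcal{A})$, which by the primality in Lemma \ref{prime} reduces to $t_1^2 x_1^2 + t_2^2 x_2^2 + t_3^2 x_3^2 \notin (\partial_{\mathcal{A}} x_1, \partial_{\mathcal{A}} x_2, \partial_{\mathcal{A}} x_3)$, and this is equivalent to $s_1 t_1^2 + s_2 t_2^2 + s_3 t_3^2 \ne 0$; the odd-degree case is reduced to the even one by multiplying by a non-zero $t_i$. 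In Case 2 the analogous check for $s_1 x_1^2 + s_2 x_2^2 + s_3 x_3^2$ is identical in spirit. These contradictions are exactly the ones set up at the end of the proof of Lemma \ref{second}, and the technical work is to systematize them uniformly in $n$ and across both parities to yield the full dimension bound.
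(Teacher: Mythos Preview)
Your overall strategy is right: Lemma \ref{second} gives the lower bound $\dim_k H^n(\mathcal{A})\ge 1$, and the task is to prove the matching upper bound. But your plan for the upper bound diverges from what the paper does, and in its present form it is only a sketch.

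You propose to handle each $n\ge 3$ by a direct parity-sorted linear-algebra computation ``paralleling Proposition \ref{rank3}''. Two remarks. First, even Proposition \ref{rank3} does not work this way beyond degree $4$: after the low-degree calculations it switches to long exact sequences (coming from $0\to I\to\mathcal{A}\to\mathcal{A}/I\to 0$ and then the filtration $I_1\subset I_2\subset I_3$ with Lemma \ref{twohg}) and an induction. Second, your ``consistency check'' paragraph is really about the lower bound again (non-vanishing of the expected generator), not the upper bound; the genuine work you still owe is a uniform-in-$n$ argument that every cocycle is a coboundary modulo one dimension, and you have not indicated how the parity reduction closes for arbitrary $n$.

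The paper's route is cleaner and avoids the degree-by-degree linear algebra entirely. One first checks $\dim_k H^3(\mathcal{A})=1$ directly (this is where Lemma \ref{rank5} is actually used: it gives $\dim_k Z^3(\mathcal{A})=4$, and one checks $\dim_k B^3(\mathcal{A})=3$). Then one introduces the DG ideal $I=(\partial_{\mathcal{A}}(x_1),\partial_{\mathcal{A}}(x_2),\partial_{\mathcal{A}}(x_3))$; since these generators are central coboundaries, $Q=\mathcal{A}/I$ has zero differential and $\dim_k Q^i=4$ for $i\ge 2$. Because $r(M)=2$ one may take $I=(r_1,r_2)$ with $r_1,r_2$ independent, and then
\[
H^i(I)\;\cong\;\lceil r_1\rceil H^{i-2}(\mathcal{A})\oplus\lceil r_2\rceil H^{i-2}(\mathcal{A})\oplus\lceil r_1x_2-x_1r_2\rceil H^{i-3}(\mathcal{A}).
\]
Since $r_1,r_2$ and $r_1x_2-x_1r_2$ are all coboundaries in $\mathcal{A}$, the map $H^i(\iota)$ vanishes for $i\ge 3$, so the long exact sequence of $0\to I\to\mathcal{A}\to Q\to 0$ yields
\[
\dim_k H^i(\mathcal{A})+\dim_k H^{i+1}(I)=\dim_k H^i(Q)=4\qquad(i\ge 3),
\]
which together with the expression for $H^{i+1}(I)$ becomes the recursion
\[
\dim_k H^i(\mathcal{A})+2\dim_k H^{i-1}(\mathcal{A})+\dim_k H^{i-2}(\mathcal{A})=4.
\]
Feeding in $\dim_k H^1=\dim_k H^2=\dim_k H^3=1$ gives $\dim_k H^i(\mathcal{A})=1$ for all $i\ge 0$ by induction, and Lemma \ref{second} then identifies the algebra structure. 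This inductive mechanism is the missing ingredient in your proposal.
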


\begin{proof}
First, we claim $\dim_kH^3(\mathcal{A})=1$. Indeed, For any cocycle element
 $$\xi=l_{1}x_1^3+l_{2}x_1^2x_2+l_{3}x_1^2x_3+l_{4}x_1x_2^2+l_{5}x_2^3+l_{6}x_2^2x_3+l_{7}x_1x_3^2+l_{8}x_2x_3^2+l_{9}x_3^3+l_{10}x_1x_2x_3$$ in $Z^3(\mathcal{A})$,
 we have
\begin{align*}
0&=\partial_{\mathcal{A}}(\xi)=l_{1}x_1^2(m_{11}x_1^2+m_{12}x_2^2+m_{13}x_3^2)+l_2x_1^2(m_{21}x_1^2+m_{22}x_2^2+m_{23}x_3^2)           \\
&+l_3x_1^2(m_{31}x_1^2+m_{32}x_2^2+m_{33}x_3^2)+l_4(m_{11}x_1^2+m_{12}x_2^2+m_{13}x_3^2)x_2^2\\
& +l_5(m_{21}x_1^2+m_{22}x_2^2+m_{23}x_3^2)x_2^2+l_6x_2^2(m_{31}x_1^2+m_{32}x_2^2+m_{33}x_3^2)\\
&+l_7(m_{11}x_1^2+m_{12}x_2^2+m_{13}x_3^2)x_3^2+l_8(m_{21}x_1^2+m_{22}x_2^2+m_{23}x_3^2)x_3^2\\
&+l_9(m_{31}x_1^2+m_{32}x_2^2+m_{33}x_3^2)x_3^2+l_{10}(m_{11}x_1^2+m_{12}x_2^2+m_{13}x_3^2)x_2x_3\\
&-l_{10}x_1(m_{21}x_1^2+m_{22}x_2^2+m_{23}x_3^2)x_3+l_{10}x_1x_2(m_{31}x_1^2+m_{32}x_2^2+m_{33}x_3^2).
\end{align*}
This implies that
\begin{align*}
\begin{cases}
l_1m_{11}+l_2m_{21}+l_3m_{31}=0\\
l_1m_{12}+l_2m_{22}+l_3m_{32}+l_4m_{11}+l_5m_{21}+l_6m_{31}=0\\
l_1m_{13}+l_2m_{23}+l_3m_{33}+l_7m_{11}+l_8m_{21}+l_9m_{31}=0\\
l_4m_{13}+l_5m_{23}+l_6m_{33}+l_7m_{12}+l_8m_{22}+l_9m_{32}=0\\
l_4m_{12}+l_5m_{22}+l_6m_{32}=0\\
l_7m_{13}+l_8m_{23}+l_9m_{33}=0\\
l_{10}=0.
\end{cases}
\end{align*}
Hence $$\left(
      \begin{array}{ccccccccc}
        m_{11} & m_{21} & m_{31} & 0 & 0 & 0 & 0 & 0 & 0 \\
        m_{12} & m_{22} & m_{32} & m_{11} & m_{21} & m_{31} & 0 & 0 & 0 \\
        m_{13} & m_{23} & m_{33} & 0 & 0 & 0 & m_{11} & m_{21} & m_{31} \\
        0 & 0 & 0 & m_{13} & m_{23} & m_{33} & m_{12} & m_{22} & m_{32} \\
        0 & 0 & 0 & m_{12} & m_{22} & m_{32} & 0 & 0 & 0 \\
        0 & 0 & 0 & 0 & 0 & 0 & m_{13} & m_{23} & m_{33} \\
      \end{array}
    \right)\left(
             \begin{array}{c}
               l_1 \\
               l_2 \\
               l_3 \\
               l_4 \\
               l_5\\
               l_6 \\
               l_7\\
               l_8 \\
               l_9 \\
             \end{array}
           \right)=0.
    $$
By Lemma \ref{rank5}, $$r\left(
      \begin{array}{ccccccccc}
        m_{11} & m_{21} & m_{31} & 0 & 0 & 0 & 0 & 0 & 0 \\
        m_{12} & m_{22} & m_{32} & m_{11} & m_{21} & m_{31} & 0 & 0 & 0 \\
        m_{13} & m_{23} & m_{33} & 0 & 0 & 0 & m_{11} & m_{21} & m_{31} \\
        0 & 0 & 0 & m_{13} & m_{23} & m_{33} & m_{12} & m_{22} & m_{32} \\
        0 & 0 & 0 & m_{12} & m_{22} & m_{32} & 0 & 0 & 0 \\
        0 & 0 & 0 & 0 & 0 & 0 & m_{13} & m_{23} & m_{33} \\
      \end{array}
    \right)=5.$$
    So $\dim_k Z^3(\mathcal{A})=9-5=4$. On the other hand,
\begin{align*}
\partial_{\mathcal{A}}(x_1x_2)&=(m_{11}x_1^2+m_{12}x_2^2+m_{13}x_3^2)x_2-x_1(m_{21}x_1^2+m_{22}x_2^2+m_{23}x_3^2) \\
                          &=m_{11}x_1^2x_2+m_{12}x_2^3+m_{13}x_2x_3^2-m_{21}x_1^3-m_{22}x_1x_2^2-m_{23}x_1x_3^2, \\
\partial_{\mathcal{A}}(x_1x_3)&=(m_{11}x_1^2+m_{12}x_2^2+m_{13}x_3^2)x_3-x_1(m_{31}x_1^2+m_{32}x_2^2+m_{33}x_3^2)\\
                          &=m_{11}x_1^2x_3+m_{12}x_2^2x_3+m_{13}x_3^3-m_{31}x_1^3-m_{32}x_1x_2^2-m_{33}x_1x_3^2,  \\
\partial_{\mathcal{A}}(x_2x_3)&=(m_{21}x_1^2+m_{22}x_2^2+m_{23}x_3^2)x_3-x_2(m_{31}x_1^2+m_{32}x_2^2+m_{33}x_3^2) \\
                          &=m_{21}x_1^2x_3+m_{22}x_2^2x_3+m_{23}x_3^3-m_{31}x_1^2x_2-m_{32}x_2^3-m_{33}x_2x_3^2
\end{align*}
are linearly independent, since
\begin{align*}
0&=\lambda_1\partial_{\mathcal{A}}(x_1x_2)+\lambda_2\partial_{\mathcal{A}}(x_1x_3)+\lambda_3\partial_{\mathcal{A}}(x_2x_3)\\
 &=\lambda_1(m_{11}x_1^2x_2+m_{12}x_2^3+m_{13}x_2x_3^2-m_{21}x_1^3-m_{22}x_1x_2^2-m_{23}x_1x_3^2)  \\
 &+\lambda_2(m_{11}x_1^2x_3+m_{12}x_2^2x_3+m_{13}x_3^3-m_{31}x_1^3-m_{32}x_1x_2^2-m_{33}x_1x_3^2)\\
 &+\lambda_3(m_{21}x_1^2x_3+m_{22}x_2^2x_3+m_{23}x_3^3-m_{31}x_1^2x_2-m_{32}x_2^3-m_{33}x_2x_3^2) \\
 &=(\lambda_1m_{11}-\lambda_3m_{31})x_1^2x_2 + (\lambda_1m_{12}-\lambda_3m_{32})x_2^3+(\lambda_1m_{13}-\lambda_3m_{33})x_2x_3^2\\
 &-(\lambda_1m_{21}+\lambda_2m_{31})x_1^3-(\lambda_1m_{22}+\lambda_2m_{32})x_1x_2^2-(\lambda_1m_{23}+\lambda_2m_{33})x_1x_3^2 \\
 &+(\lambda_2m_{11}+\lambda_3m_{21})x_1^2x_3 + (\lambda_2m_{12}+\lambda_3m_{22})x_2^2x_3+(\lambda_2m_{13}+\lambda_3m_{23})x_3^3
\end{align*}
implies
\begin{align*}
\begin{cases}
\lambda_1m_{11}-\lambda_3m_{31}=0\\
\lambda_1m_{12}-\lambda_3m_{32}=0\\
\lambda_1m_{13}-\lambda_3m_{33}=0\\
\lambda_1m_{21}+\lambda_2m_{31}=0\\
\lambda_1m_{22}+\lambda_2m_{32}=0\\
\lambda_1m_{23}+\lambda_2m_{33}=0\\
\lambda_2m_{11}+\lambda_3m_{21}=0\\
\lambda_2m_{12}+\lambda_3m_{22}=0\\
\lambda_2m_{13}+\lambda_3m_{23}=0
\end{cases}\Leftrightarrow
\lambda_1=\lambda_2=\lambda_3=0
\end{align*}
since $r(M)=2$. Then $\dim_k B^3(\mathcal{A})=3$ and we show the claim $\dim_kH^3(\mathcal{A})=1$.

Let $I=(r_1,r_2,r_3)$ be the DG ideal of $\mathcal{A}$ generated by the central coboundary elements $r_1=\partial_{\mathcal{A}}(x_1), r_2=\partial_{\mathcal{A}}(x_2)$ and $r_3=\partial_{\mathcal{A}}(x_3)$. Then the DG quotient ring $Q=\mathcal{A}/I$ has trivial differential.
Since each
$r_i=m_{i1}x_1^2+m_{i2}x_2^2+m_{i3}x_3^2$ and $r(M)=2$, we may assume without the loss of generality that $r_1,r_2$ are linearly independent, which is equivalent to $t_3\neq 0$.  Then $r_3=\frac{t_1}{t_3}r_1+\frac{t_2}{t_3}r_2$ and $I=(r_1,r_2)$. We have
\begin{align*}
H^i(I)=\begin{cases}
k\lceil r_1\rceil\oplus k\lceil r_2\rceil, i=2\\
\lceil r_1 \rceil H^{i-2}(\mathcal{A})\oplus \lceil r_2\rceil H^{i-2}(\mathcal{A})\oplus \lceil r_1x_2-x_1r_2\rceil H^{i-3}(\mathcal{A}),                                        i\ge 3\\
\end{cases}
\end{align*}
and
\begin{align*}
\dim_k H^i(Q)=\dim_k Q^i=\begin{cases}
0, i< 0\\
1,i=0 \\
3,i=1 \\
4, i\ge 2.
\end{cases}
\end{align*}
The short exact sequence
\begin{align*}
0\to I\stackrel{\iota}{\to} \mathcal{A}\stackrel{\pi}{\to} Q\to 0
\end{align*}
induces the cohomology long exact sequence
($\spadesuit$): \begin{align*}
 0 \to H^0(\mathcal{A}) \stackrel{H^0(\pi)}{\to} H^0(Q)\stackrel{\delta^0}{\to} H^1(I)\stackrel{H^1(\iota)}{\to} H^1(\mathcal{A})\stackrel{H^1(\pi)}{\to} H^1(Q)\stackrel{\delta^1}{\to} H^2(I) \\
\stackrel{H^2(\iota)}{\to} H^2(\mathcal{A})\stackrel{H^2(\pi)}{\to}H^2(Q) \stackrel{\delta^2}{\to} \cdots \stackrel{\delta^{i-1}}{\to} H^i(I)\stackrel{H^i(\iota)}{\to} H^i(\mathcal{A})\stackrel{H^i(\pi)}{\to}H^i(Q) \stackrel{\delta^i}{\to}\cdots.
\end{align*}
Since $r_1,r_2$ and $r_1x_2-x_1r_2$ are coboundary elements in $\mathcal{A}$,  we have $H^{i}(\iota)=0$ for any $i\ge 3$. The cohomology long exact sequence ($\spadesuit$) implies that $$\dim_k H^i(\mathcal{A})+\dim_k H^{i+1}(I)= \dim_k H^i(Q), i\ge 3.$$ By Lemma \ref{second} and $\dim_kH^3(\mathcal{A})=1$, we inductively get $\dim_kH^i(\mathcal{A})=1, i\ge 4$. Hence $\dim_k H^i(\mathcal{A})=1$ for any $i\ge 0$.

 By Lemma \ref{second},  the algebra $k[\lceil t_1x_1+t_2x_2+t_3x_3\rceil ]$ is a subalgebra of $H(\mathcal{A})$ when $\sum\limits_{i=1}^3s_it_i^2\neq 0$, and
 $$k[\lceil t_1x_1+t_2x_2+t_3x_3\lceil, \lceil s_1x_1^2+s_2x_2^2+s_3x_3^2\rceil ]/(\lceil t_1x_1 +t_2x_2+t_3x_3\rceil^2)$$ is a subalgebra of $H(\mathcal{A})$ when $\sum\limits_{i=1}^3s_it_i^2=0$. Considering the dimension of each $H^i(\mathcal{A})$ gives that
 $H(\mathcal{A})=k[\lceil t_1x_1+t_2x_2+t_3x_3\rceil ] = H(\mathcal{A})$ when $\sum\limits_{i=1}^3s_it_i^2\neq 0$, and
 $$k[\lceil t_1x_1+t_2x_2+t_3x_3\rceil, \lceil s_1x_1^2+s_2x_2^2+s_3x_3^2\rceil ]/(\lceil t_1x_1 +t_2x_2+t_3x_3\rceil^2)= H(\mathcal{A}),$$ when $\sum\limits_{i=1}^3s_it_i^2=0$.
\end{proof}

It remains to consider the case that $r(M)=1$. In this case, we might as well let $$M=\left(
                                 \begin{array}{ccc}
                                   m_{11} & m_{12} & m_{13} \\
                                   l_1m_{11} & l_1m_{12} & l_1m_{13} \\
                                   l_2m_{11} & l_2m_{12} & l_2m_{13} \\
                                 \end{array}
                               \right),\,\,\text{with}\,\, l_1,l_2\in k \,\, \text{and}\,\, (m_{11}, m_{12},m_{13})\neq 0.$$
Indeed, one can see the reason by \cite[Remark 5.4]{MWZ}.
Note that we have
                               $$\begin{cases}
                               \partial_{\mathcal{A}}(x_1)=m_{11}x_1^2+m_{12}x_2^2+m_{13}x_3^2\\
                               \partial_{\mathcal{A}}(x_2)=l_1[m_{11}x_1^2+m_{12}x_2^2+m_{13}x_3^2]\\
                               \partial_{\mathcal{A}}(x_3)=l_2[m_{11}x_1^2+m_{12}x_2^2+m_{13}x_3^2].
 \end{cases}
$$
For any $c_1x_1+c_2x_2+c_3x_3\in Z^1(\mathcal{A})$, we have
\begin{align*}
0 =\partial_{\mathcal{A}}(c_1x_1+c_2x_2+c_3x_3) &=(c_1+l_1c_2+l_2c_3)[m_{11}x_1^2+m_{12}x_2^2+m_{13}x_3^2] \\
&\Rightarrow c_1+l_1c_2+l_2c_3=0,
\end{align*}
 which admits a basic solution system
$\left(
    \begin{array}{c}
      l_1 \\
      -1 \\
      0 \\
    \end{array}
  \right),\left(
            \begin{array}{c}
              l_2 \\
              0 \\
              -1 \\
            \end{array}
          \right).$
So $$Z^1(\mathcal{A})=k(l_1x_1-x_2)\oplus k(l_2x_1-x_3)$$ and $$H^1(\mathcal{A})=k\lceil l_1x_1-x_2\rceil \oplus k\lceil l_2x_1-x_3\rceil.$$
For any $c_{11}x_1^2+c_{12}x_1x_2+c_{13}x_1x_3+c_{22}x_2^2+c_{23}x_2x_3+c_{33}x_3^2\in Z^2(\mathcal{A}),$  we have
\begin{align*}
0=&\partial_{\mathcal{A}}[c_{11}x_1^2+c_{12}x_1x_2+c_{13}x_1x_3+c_{22}x_2^2+c_{23}x_2x_3+c_{33}x_3^2]\\
=&c_{12}(m_{11}x_1^2+m_{12}x_2^2+m_{13}x_3^2)x_2-c_{12}x_1l_1(m_{11}x_1^2+m_{12}x_2^2+m_{13}x_3^2)\\
+&c_{13}(m_{11}x_1^2+m_{12}x_2^2+m_{13}x_3^2)x_3-c_{13}x_1l_2(m_{11}x_1^2+m_{12}x_2^2+m_{13}x_3^2)\\
+&c_{23}l_1(m_{11}x_1^2+m_{12}x_2^2+m_{13}x_3^2)x_3-c_{23}x_2l_2(m_{11}x_1^2+m_{12}x_2^2+m_{13}x_3^2)\\
=&-(c_{12}l_1+l_2c_{13})m_{11}x_1^3+(c_{12}-c_{23}l_2)m_{11}x_1^2x_2+(c_{13}+c_{23}l_1)m_{11}x_1^2x_3 \\
-& (c_{12}l_1+c_{13}l_2)m_{12}x_1x_2^2 -(c_{12}l_1+c_{13}l_2)m_{13}x_1x_3^2+(c_{12}-c_{23}l_2)m_{12}x_2^3\\
+&(c_{13}+c_{23}l_1)m_{12}x_2^2x_3+(c_{12}-c_{23}l_2)m_{13}x_2x_3^2+(c_{13}+c_{23}l_1)m_{13}x_3^3.
\end{align*}
Since $(m_{11},m_{12},m_{13})\neq 0$, we get
\begin{align*}
\begin{cases}
c_{12}l_1+l_2c_{13}=0\\
c_{12}-c_{23}l_2=0\\
c_{13}+c_{23}l_1=0
\end{cases}\Leftrightarrow \left(
                             \begin{array}{ccc}
                               l_1 & l_2 & 0 \\
                               1 & 0 & -l_2 \\
                               0 & 1 & l_1 \\
                             \end{array}
                           \right)\left(
                                    \begin{array}{c}
                                      c_{12} \\
                                      c_{13} \\
                                      c_{23} \\
                                    \end{array}
                                  \right)=0.
\end{align*}
We get $c_{12}=tl_2,c_{13}=-tl_1,c_{23}=t$, for some $t\in k$. Thus $$Z^2(\mathcal{A})=kx_1^2\oplus kx_2^2\oplus kx_3^2\oplus k(l_2x_1x_2-l_1x_1x_3+x_2x_3).$$
Since $B^2(\mathcal{A})=k(m_{11}x_1^2+m_{12}x_2^2+m_{13}x_3^2)$, we have
\begin{align*}
H^2(\mathcal{A})=\frac{kx_1^2\oplus kx_2^2\oplus kx_3^2\oplus k(l_2x_1x_2-l_1x_1x_3+x_2x_3)}{k(m_{11}x_1^2+m_{12}x_2^2+m_{13}x_3^2)}.
\end{align*}
Moreover,  we claim that $\dim_kH^i(\mathcal{A})=i+1$, for any $i\ge 0$. We prove this claim as follows.

Let $I=(m_{11}x_1^2+m_{12}x_2^2+m_{13}x_3^2)$ be the DG ideal of $\mathcal{A}$ generated by the central coboundary elements $\partial_{\mathcal{A}}(x_1)$. Then the DG quotient ring $Q=\mathcal{A}/I$ has trivial differential and
\begin{align*}
\dim_kH^i(Q)=\dim_k Q^i=\begin{cases}
0, i< 0\\
2i+1,i\ge 0.
\end{cases}
\end{align*}
The short exact sequence
\begin{align*}
0\to I\stackrel{\iota}{\to} \mathcal{A}\stackrel{\pi}{\to} Q\to 0
\end{align*}
induces the cohomology long exact sequence
($\heartsuit$): \begin{align*}
 0 \to H^0(\mathcal{A}) \stackrel{H^0(\pi)}{\to} H^0(Q)\stackrel{\delta^0}{\to} H^1(I)\stackrel{H^1(\iota)}{\to} H^1(\mathcal{A})\stackrel{H^1(\pi)}{\to} H^1(Q)\stackrel{\delta^1}{\to} H^2(I) \\
\stackrel{H^2(\iota)}{\to} H^2(\mathcal{A})\stackrel{H^2(\pi)}{\to}H^2(Q) \stackrel{\delta^2}{\to} \cdots \stackrel{\delta^{i-1}}{\to} H^i(I)\stackrel{H^i(\iota)}{\to} H^i(\mathcal{A})\stackrel{H^i(\pi)}{\to}H^i(Q) \stackrel{\delta^i}{\to}\cdots.
\end{align*}
Since $m_{11}x_1^2+m_{12}x_2^2+m_{13}x_3^2=\partial_{\mathcal{A}}(x_1)$ is a central coboundary elements in $\mathcal{A}$,  we have
 $H^i(I)=\lceil m_{11}x_1^2+m_{12}x_2^2+m_{13}x_3^2\rceil H^{i-2}(\mathcal{A})$ and
 $H^{i}(\iota)=0$ for any $i\ge 2$. The cohomology long exact sequence ($\heartsuit$) implies that $$\dim_k H^i(\mathcal{A})+\dim_k H^{i+1}(I)= \dim_k H^i(Q)=2i+1, i\ge 2.$$
Then $\dim_k H^i(\mathcal{A})+\dim_k H^{i-1}(\mathcal{A})=2i+1$ since
\begin{align*}
\dim_kH^{i+1}(I)&=\dim_k\{\lceil m_{11}x_1^2+m_{12}x_2^2+m_{13}x_3^2\rceil H^{i-1}(\mathcal{A})\}\\
&=\dim_kH^{i-1}(\mathcal{A}), i\ge 2.
\end{align*}
Since $\dim_kH^1(\mathcal{A})=2$,  we can inductively get $\dim_kH^i(\mathcal{A})=i+1$, for any $i\ge 0$.
In order to accomplish the computation of $H(\mathcal{A})$,  we make a classification chart as follows:
$$
\begin{cases}
m_{12}l_1^2+m_{13}l_2^2\neq m_{11}, \begin{cases}
l_1l_2\neq 0;\\
l_1l_2=0;\\
\end{cases}\\
m_{12}l_1^2+m_{13}l_2^2=m_{11}, \begin{cases}
 l_1l_2\neq 0;\\
 l_1\neq 0, l_2= 0;\\
 l_2\neq 0, l_1= 0;\\
 l_1= l_2= 0. \\
 \end{cases}\\
\end{cases}
$$
We will compute $H(\mathcal{A})$ case by case according to this classification chart. We have the following proposition.
\begin{prop}\label{rankone}
$(a)$ If
$m_{12}l_1^2+m_{13}l_2^2\neq m_{11}$ and $l_1l_2\neq 0$, then $H(\mathcal{A})$ is
 $$\frac{k\langle \lceil l_1x_1-x_2\rceil, \lceil l_2x_1-x_3\rceil \rangle}{(m_{12}\lceil l_1x_1-x_2\rceil^2+m_{13}\lceil l_2x_1-x_3\rceil^2-\frac{\lceil l_1x_1-x_2\rceil \lceil l_2x_1-x_3\rceil+\lceil l_2x_1-x_3\rceil \lceil l_1x_1-x_2\rceil}{\frac{2l_1l_2}{m_{12}l_1^2+m_{13}l_2^2}})};$$

$(b)$ If $m_{12}l_1^2+m_{13}l_2^2\neq m_{11}$ and $l_1l_2=0$, then
                               $$H(\mathcal{A})=\frac{k\langle \lceil l_1x_1-x_2\rceil, \lceil l_2x_1-x_3\rceil \rangle}{(\lceil l_1x_1-x_2\rceil\lceil l_2x_1-x_3\rceil+\lceil l_2x_1-x_3\rceil \lceil l_1x_1-x_2\rceil)};$$

$(c)$ If $m_{12}l_1^2+m_{13}l_2^2=m_{11}$ and $l_1l_2\neq 0$, then
                               $$H(\mathcal{A})=\frac{k\langle \lceil l_1x_1-x_2\rceil, \lceil l_2x_1-x_3\rceil \rangle}{(m_{12}\lceil l_1x_1-x_2\rceil^2+m_{13}\lceil l_2x_1-x_3\rceil^2)};$$

$(d)$ If $m_{12}l_1^2+m_{13}l_2^2=m_{11}$, $l_1\neq 0$ and $l_2= 0$,
                           then $$H(\mathcal{A})=\frac{k\langle \lceil l_1x_1-x_2\rceil, \lceil x_3\rceil,\lceil x_1^2\rceil \rangle}{\left(
                                                                                                       \begin{array}{c}
                                                                                                         m_{12}\lceil l_1x_1-x_2\rceil^2+m_{13}\lceil x_3\rceil^2 \\
                                                                                                         \lceil x_1^2\rceil \lceil l_1x_1-x_2\rceil- \lceil l_1x_1-x_2\rceil \lceil x_1^2\rceil \\
                                                                                                       \lceil x_1^2\rceil \lceil x_3\rceil-\lceil x_3\rceil \lceil x_1^2\rceil   \\
                                                                                                       \lceil l_1x_1-x_2\rceil \lceil x_3\rceil +\lceil x_3\rceil \lceil l_1x_1-x_2\rceil
                                                                                                       \end{array}
                                                                                                     \right)};$$

$(e)$ If $m_{12}l_1^2+m_{13}l_2^2=m_{11}$, $l_2\neq 0$ and $l_1= 0$,
                           then $$H(\mathcal{A})=\frac{k\langle \lceil l_2x_1-x_3\rceil, \lceil x_2\rceil,\lceil x_1^2\rceil \rangle}{\left(
                                                                                                       \begin{array}{c}
                                                                                                         m_{13}\lceil l_2x_1-x_3\rceil^2+m_{12}\lceil x_2\rceil^2 \\
                                                                                                         \lceil x_1^2\rceil \lceil l_2x_1-x_3\rceil- \lceil l_2x_1-x_3\rceil \lceil x_1^2\rceil \\
                                                                                                       \lceil x_1^2\rceil \lceil x_2\rceil-\lceil x_2\rceil \lceil x_1^2\rceil   \\
                                                                                                       \lceil l_2x_1-x_3\rceil \lceil x_2\rceil +\lceil x_2\rceil \lceil l_2x_1-x_3\rceil
                                                                                                       \end{array}
                                                                                                     \right)};
$$

$(f)$ If $m_{12}l_1^2+m_{13}l_2^2=m_{11}$, $l_1= 0$ and $l_2= 0$, then
$$H(\mathcal{A})=\frac{k\langle \lceil x_3\rceil, \lceil x_2\rceil,\lceil x_1^2\rceil \rangle}{\left(
                                                                                                       \begin{array}{c}
                                                                                                         m_{12}\lceil x_2\rceil^2+m_{13}\lceil x_3\rceil^2 \\
                                                                                                         \lceil x_1^2\rceil \lceil x_3\rceil- \lceil x_3\rceil \lceil x_1^2\rceil \\
                                                                                                       \lceil x_1^2\rceil \lceil x_2\rceil-\lceil x_2\rceil \lceil x_1^2\rceil   \\
                                                                                                       \lceil x_3\rceil \lceil x_2\rceil +\lceil x_2\rceil \lceil x_3\rceil
                                                                                                       \end{array}
                                                                                                     \right)}.
$$

\end{prop}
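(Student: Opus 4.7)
The plan is to leverage the already-established facts that $H^0(\mathcal{A})=k$, that $H^1(\mathcal{A})=ku\oplus kv$ with $u:=\lceil l_1x_1-x_2\rceil$ and $v:=\lceil l_2x_1-x_3\rceil$, that the four classes $\lceil x_1^2\rceil,\lceil x_2^2\rceil,\lceil x_3^2\rceil,\omega$ (with $\omega:=\lceil l_2x_1x_2-l_1x_1x_3+x_2x_3\rceil$) span $H^2(\mathcal{A})$ subject to the single linear relation $m_{11}\lceil x_1^2\rceil+m_{12}\lceil x_2^2\rceil+m_{13}\lceil x_3^2\rceil=0$, and that $\dim_kH^i(\mathcal{A})=i+1$ for every $i\ge 0$. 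First I would compute the four essential quadratic products in $H^2(\mathcal{A})$ by exploiting $x_ix_j+x_jx_i=0$ in $\mathcal{A}^{\#}$: a short direct calculation gives $u^2=l_1^2\lceil x_1^2\rceil+\lceil x_2^2\rceil$, $v^2=l_2^2\lceil x_1^2\rceil+\lceil x_3^2\rceil$, $uv+vu=2l_1l_2\lceil x_1^2\rceil$, and $uv-vu=2\omega$.

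Next, combining these with the linear relation above yields the master identity $m_{12}u^2+m_{13}v^2=(m_{12}l_1^2+m_{13}l_2^2-m_{11})\lceil x_1^2\rceil$, which will be the source of the quadratic relation appearing in every case. A small $3\times 3$ determinant check on the coefficient matrix shows that the subspace $H^1\cdot H^1\subseteq H^2(\mathcal{A})$ equals all of $H^2(\mathcal{A})$ precisely under the hypotheses of (a), (b), (c); whereas in (d), (e), (f) -- where $m_{12}l_1^2+m_{13}l_2^2=m_{11}$ and at least one of $l_1,l_2$ vanishes -- this product subspace has codimension one in $H^2(\mathcal{A})$ and the central class $\lceil x_1^2\rceil$ must be adjoined as an additional degree-$2$ generator. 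Centrality of each $x_i^2$ in $\mathcal{A}$ (Lemma \ref{centcocy}) immediately yields the commutator relations with this new generator, while the anti-commutator relations such as $\lceil l_1x_1-x_2\rceil\lceil x_3\rceil+\lceil x_3\rceil\lceil l_1x_1-x_2\rceil=0$ appearing in (d) are nothing but the identity $uv+vu=2l_1l_2\lceil x_1^2\rceil=0$ under $l_1l_2=0$. Reading off the exact form of the quadratic relation from the master identity completes the list of relations in each of the six cases.

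Finally, to promote the obvious graded surjection from the presented algebra $\widetilde H$ onto $H(\mathcal{A})$ to an isomorphism, it suffices to verify $\dim_k\widetilde H^n=n+1=\dim_kH^n(\mathcal{A})$ for all $n\ge 0$. For cases (a), (b), (c) the presentation is $k\langle u,v\rangle/(f)$ with a single quadratic relation $f$, and a standard Gr\"obner-basis check (equivalently, that $uf,vf,fu,fv$ are linearly independent in degree $3$) gives Hilbert series $1/(1-2t+t^2)=\sum(n+1)t^n$, as desired. For cases (d), (e), (f) the presented algebra splits as a tensor product $k[w]\otimes B$ with $w:=\lceil x_1^2\rceil$ central, where $B$ is generated by two degree-$1$ classes modulo anti-commutation and one quadratic relation $m_{12}a^2+m_{13}b^2=0$ (at least one coefficient is nonzero since $(m_{11},m_{12},m_{13})\ne 0$); a normal-form argument, first using $ab=-ba$ to order letters and then using the quadratic relation to bound the exponent of $a$, yields Hilbert series $(1+t)/(1-t)$ for $B$ and hence $\frac{1+t}{(1-t)(1-t^2)}=\frac{1}{(1-t)^2}$ for $\widetilde H$. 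The main obstacle is this final Hilbert-series verification, particularly establishing in (d)--(f) that the listed relations suffice to normalize arbitrary words in three non-commuting generators; this is handled by first pushing all $w$'s to one side via centrality and then reducing the resulting two-variable word.
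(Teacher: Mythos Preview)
Your argument has a genuine gap at the final step. You correctly produce a graded algebra homomorphism $\phi:\widetilde H\to H(\mathcal{A})$ (since the relations you wrote down do hold in cohomology), but calling it an ``obvious graded surjection'' is unjustified: you have only verified that the image of $\phi$ fills up $H^n(\mathcal{A})$ for $n\le 2$. Nothing you have proved forces $H(\mathcal{A})$ to be generated in degrees $\le 2$, and matching Hilbert series by itself cannot promote an arbitrary graded homomorphism to an isomorphism --- you must supply either injectivity or surjectivity as an independent input, and you have supplied neither in degrees $\ge 3$.

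The paper closes this gap by proving \emph{injectivity} of $\phi$ rather than surjectivity: it shows directly that the relevant monomials in $l_1x_1-x_2$, $l_2x_1-x_3$ (and $x_1^2$ in cases (d)--(f)) are not coboundaries. The two tools are Lemma~\ref{coboundary}, which says every coboundary of a given degree can be written as $\partial_{\mathcal{A}}$ applied to an element whose monomial summands have prescribed exponent parities, and Lemma~\ref{prime}, which says the ideal $(\partial_{\mathcal{A}}(x_1))$ is prime in the commutative subring $k[x_1^2,x_2^2,x_3^2]$. For example, if $(l_1x_1-x_2)^{2j}$ were a coboundary, the parity splitting would force $(l_1^2x_1^2+x_2^2)^j\in(\partial_{\mathcal{A}}(x_1))$ inside $k[x_1^2,x_2^2,x_3^2]$, hence by primality $l_1^2x_1^2+x_2^2\in(\partial_{\mathcal{A}}(x_1))$, contradicting your own computation of $H^2(\mathcal{A})$. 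Odd powers and mixed products reduce to the even case by the same parity trick. Once these non-coboundary facts are in hand, the dimension count $\dim_k H^n(\mathcal{A})=n+1$ finishes the proof --- but the non-coboundary argument is the essential missing ingredient, and it is not a formality.
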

\begin{proof}
(a)Note that $x_1x_2+x_2x_1=0,x_1x_3+x_3x_1=0$ and $x_2x_3+x_3x_2=0$
in $\mathcal{A}$. We have  \begin{align*}
\begin{cases}
(l_1x_1-x_2)^2=l_1^2x_1^2+x_2^2,\\
(l_2x_1-x_3)^2=l_2^2x_1^2+x_3^2,\\
(l_1x_1-x_2)(l_2x_1-x_3)+(l_2x_1-x_3)(l_1x_1-x_2)=2l_1l_2x_1^2.
\end{cases}
\end{align*}
It is straight forward to check that $$Z^2(\mathcal{A})=kx_1^2\oplus k(l_1x_1-x_2)^2\oplus k(l_2x_1-x_3)^2\oplus k(l_1x_1-x_2)(l_2x_1-x_3).$$
Since
\begin{align*}
&\quad m_{12}(l_1x_1-x_2)^2+m_{13}(l_2x_1-x_3)^2-(m_{12}l_1^2+m_{13}l_2^2-m_{11})x_1^2\\
&=m_{12}x_2^2+m_{13}x_3^2+m_{11}x_1^2\in B^2(\mathcal{A}),
 \end{align*} we have
\begin{align}\label{notcob}
H^2(\mathcal{A})=k\lceil l_1x_1-x_2\rceil^2\oplus k\lceil l_2x_1-x_3\rceil^2\oplus k\lceil (l_1x_1-x_2)(l_2x_1-x_3)\rceil.
\end{align}
We claim that $$\frac{k\langle \lceil l_1x_1-x_2\rceil, \lceil l_2x_1-x_3\rceil \rangle}{(m_{12}\lceil l_1x_1-x_2\rceil^2+m_{13}\lceil l_2x_1-x_3\rceil^2-\frac{\lceil l_1x_1-x_2\rceil \lceil l_2x_1-x_3\rceil+\lceil l_2x_1-x_3\rceil \lceil l_1x_1-x_2\rceil}{\frac{2l_1l_2}{m_{12}l_1^2+m_{13}l_2^2}}}$$
is a subalgebra of $H(\mathcal{A})$. It suffices to show that \begin{align*}
\begin{cases}
(l_1x_1-x_2)^n\not\in B^{n}(\mathcal{A})\\
(l_2x_1-x_3)^n\not\in B^n(\mathcal{A}) \\
(l_1x_1-x_2)^i(l_2x_1-x_3)^j\not\in  B^{i+j}(\mathcal{A})
\end{cases}
\end{align*}
for any $n\ge 2$ and $i,j\ge 1$. Indeed, if $(l_1x_1-x_2)^n\in B^{n}(\mathcal{A})$ then we have
$$(l_1x_1-x_2)^n=\begin{cases}
\partial_{\mathcal{A}}[x_1x_2f+x_1x_3g+x_2x_3h],\,\, \text{if}\,\,n=2j+1\,\, \text{is odd}\\
\partial_{\mathcal{A}}[x_1f+x_2g+x_3h+x_1x_2x_3u], \,\, \text{if}\,\,n=2j\,\, \text{is even},
\end{cases}
$$
where $f,g,h$ and $u$ are all linear combinations of monomials with non-negative even exponents.
 When $n=2j$ is even, we have
 \begin{align*}
\quad (l_1^2x_1^2+&x_2^2)^j =(l_1x_1-x_2)^n\\
&=\partial_{\mathcal{A}}[x_1f+x_2g+x_3h+x_1x_2x_3u]\\
                                    &=(m_{11}x_1^2+m_{12}x_2^2+m_{13}x_3^2)f+l_1(m_{11}x_1^2+m_{12}x_2^2+m_{13}x_3^2)g\\
                                    &+l_2(m_{11}x_1^2+m_{12}x_2^2+m_{13}x_3^2)h+(m_{11}x_1^2+m_{12}x_2^2+m_{13}x_2^2)x_2x_3u \\
                                    &-x_1l_1(m_{11}x_1^2+m_{12}x_2^2+m_{13}x_3^2)x_3u+x_1x_2l_2(m_{11}x_1^2+m_{12}x_2^2+m_{13}x_3^2)u.
 \end{align*}
Considering the parity of exponents of the monomials that appear on both sides of the equation above implies that
\begin{align*}
(l_1^2x_1^2+x_2^2)^j&=(m_{11}x_1^2+m_{12}x_2^2+m_{13}x_3^2)f+l_1(m_{11}x_1^2+m_{12}x_2^2+m_{13}x_3^2)g\\
                                    &+l_2(m_{11}x_1^2+m_{12}x_2^2+m_{13}x_3^2)h\\
                                    &=\partial_{\mathcal{A}}(x_1)[f+l_1g+l_2h]
\end{align*}
and
\begin{align*}
\partial_{\mathcal{A}}(xyzu)
&=(m_{11}x_1^2+m_{12}x_2^2+m_{13}x_3^2)x_2x_3u-l_1x_1(m_{11}x_1^2+m_{12}x_2^2+m_{13}x_3^2)x_3u \\
&+x_1x_2l_2(m_{11}x_1^2+m_{12}x_2^2+m_{13}x_3^2)u=0.
\end{align*}
 Hence $(l_1^2x_1^2+x_2^2)^j$ is in the graded ideal $(\partial_{\mathcal{A}}(x_1))$ of $k[x_1^2,x_2^2,x_3^2]$. By Lemma \ref{prime}, $(\partial_{\mathcal{A}}(x_1),\partial_{\mathcal{A}}(x_2),\partial_{\mathcal{A}}(x_3))=(\partial_{\mathcal{A}}(x_1))$ is a graded prime ideal of $k[x_1^2,x_2^2,x_3^2]$. So $l_1^2x_1^2+x_2^2\in (\partial_{\mathcal{A}}(x_1))$. Hence there exist $a_1\in k$ such that \begin{align*}
l_1^2x_1^2+x_2^2&=a_1\partial_{\mathcal{A}}(x_1)=\partial_{\mathcal{A}}(a_1x_1).
\end{align*}
But this contradicts with the fact that $l_1^2x_1^2+x_2^2\not\in B^2(\mathcal{A})$, which we have proved above.
Thus $(l_1x_1-x_2)^n\not\in B^n(\mathcal{A})$ when $n$ is even.

When $n=2j+1$ is odd, we have
 \begin{align*}
 &(l_1x_1-x_2)(l_1^2x_1^2+x_2^2)^j =(l_1x_1-x_2)^n=\partial_{\mathcal{A}}[x_1x_2f+x_1x_3g+x_2x_3h]\\
                                    &=(m_{11}x_1^2+m_{12}x_2^2+m_{13}x_3^2)x_2f-l_1x_1(m_{11}x_1^2+m_{12}x_2^2+m_{13}x_3^2)f\\
                                    &+(m_{11}x_1^2+m_{12}x_2^2+m_{13}x_3^2)x_3g-l_2x_1(m_{11}x_1^2+m_{12}x_2^2+m_{13}x_3^2)g \\
                                    &+l_1(m_{11}x_1^2+m_{12}x_2^2+m_{13}x_3^2)x_3h-l_2x_2(m_{11}x_1^2+m_{12}x_2^2+m_{13}x_3^2)h\\
                       &=x_2(m_{11}x_1^2+m_{12}x_2^2+m_{13}x_3^2)(f-l_2h)-x_1(m_{11}x_1^2+m_{12}x_2^2+m_{13}x_3^2)(l_1f+l_2g)\\
                                    &+x_3(m_{11}x_1^2+m_{12}x_2^2+m_{13}x_3^2)(g+l_1h)\\
                                    &=(m_{11}x_1^2+m_{12}x_2^2+m_{13}x_3^2)[x_2(f-l_2h)-x_1(l_1f+l_2g)+x_3(g+l_1h)]\\
                                    &=x_1[-\partial_{\mathcal{A}}(x_2)f-\partial_{\mathcal{A}}(x_3)g]+x_2[\partial_{\mathcal{A}}(x_1)f-\partial_{\mathcal{A}}(x_3)h]+x_3[\partial_{\mathcal{A}}(x_2)h+\partial_{\mathcal{A}}(x_1)g].
 \end{align*}
This implies that
\begin{align*}
\begin{cases}
l_1(l_1^2x_1^2+x_2^2)^j=-(l_1f+l_2g)(m_{11}x_1^2+m_{12}x_2^2+m_{13}x_3^2)\\
(l_1^2x_1^2+x_2^2)^j =(m_{11}x_1^2+m_{12}x_2^2+m_{13}x_3^2)(l_2h-f)\\
0=g+l_1h.
\end{cases}
\end{align*}
 Then $(l_1^2x_1^2+x_2^2)^j=(l_1x_1-x_2)^{2j}\in B^{2j}(\mathcal{A})$, which contradicts with the proved fact that $(l_1x_1-x_2)^n\not\in B^n(\mathcal{A})$ when $n$ is even. Therefore, $(l_1x_1-x_2)^n\not\in B^n(\mathcal{A})$ when $n$ is odd.  Then $(l_1x_1-x_2)^n\not\in B^n(\mathcal{A})$ for any $n\ge 3$. Similarly,  we can show that
  \begin{align*}
  \begin{cases}
  (l_2x_1-x_3)^n\not\in B^n(\mathcal{A}), \,\, \text{for any}\,\, n\ge 3\\
  (l_1x_1-x_2)^{2i+1}(l_2x_1-x_3)^{2j}\not\in  B^{2i+2j+1}(\mathcal{A}),  \,\, \text{for any}\,\, i,j\ge 1  \\
  (l_1x_1-x_2)^{2i}(l_2x_1-x_3)^{2j+1}\not\in  B^{2i+2j+1}(\mathcal{A}),  \,\, \text{for any}\,\, i,j\ge 1  \\
  (l_1x_1-x_2)^{2i}(l_2x_1-x_3)^{2j}\not\in B^{2i+2j}(\mathcal{A}), \,\, \text{for any}\,\, i,j\ge 1.
  \end{cases}
  \end{align*}
  It remains to prove $(l_1x_1-x_2)^{2i+1}(l_2x_1-x_3)^{2j+1}\not\in  B^{2i+2j+2}(\mathcal{A})$ for any $i,j\ge 1.$
If $(l_1x_1-x_2)^{2i+1}(l_2x_1-x_3)^{2j+1}\in B^{2i+2j+2}(\mathcal{A})$, then
\begin{align*}
&\quad (l_1l_2x_1^2-l_1x_1x_3+l_2x_1x_2+x_2x_3)(l_1^2x_1^2+x_2^2)^i(l_2^2x_1^2+x_3^2)^j \\
&=(l_1x_1-x_2)^{2i+1}(l_2x_1-x_3)^{2j+1}=\partial_{\mathcal{A}}[x_1f+x_2g+x_3h+x_1x_2x_3u]\\
                                    &=(m_{11}x_1^2+m_{12}x_2^2+m_{13}x_3^2)f+l_1(m_{11}x_1^2+m_{12}x_2^2+m_{13}x_3^2)g\\
                                    &+l_2(m_{11}x_1^2+m_{12}x_2^2+m_{13}x_3^2)h+(m_{11}x_1^2+m_{12}x_2^2+m_{13}x_3^2)x_2x_3u \\
                                    &-x_1l_1(m_{11}x_1^2+m_{12}x_2^2+m_{13}x_3^2)x_3u+x_1x_2l_2(m_{11}x_1^2+m_{12}x_2^2+m_{13}x_3^2)u.
\end{align*}
where $f,g,h$ and $u$ are all linear combinations of monomials with non-negative even exponents.
Hence
\begin{align*}
l_1l_2x_1^2(l_1^2x_1^2+x_2^2)^i&(l_2^2x_1^2+x_3^2)^j=(m_{11}x_1^2+m_{12}x_2^2+m_{13}x_3^2)f\\
&+l_1(m_{11}x_1^2+m_{12}x_2^2+m_{13}x_3^2)g+l_2(m_{11}x_1^2+m_{12}x_2^2+m_{13}x_3^2)h
\end{align*}
and
$$(l_1^2x_1^2+x_2^2)^i(l_2^2x_1^2+x_3^2)^j=(m_{11}x_1^2+m_{12}x_2^2+m_{13}x_3^2)u\in (\partial_{\mathcal{A}}(x_1)).$$
Since $(\partial_{\mathcal{A}}(x_1))$ is a prime ideal in $k[x_1^2,x_2^2,x_3^2]$, we conclude that $(l_1^2x_1^2+x_2^2)\in (\partial_{\mathcal{A}}(x_1))$ or
$l_2^2x_1^2+x_3^2 \in  (\partial_{\mathcal{A}}(x_1))$. This contradicts with (\ref{notcob}). By the discussion above,
$$\frac{k\langle \lceil l_1x_1-x_2\rceil, \lceil l_2x_1-x_3\rceil \rangle}{(m_{12}\lceil l_1x_1-x_2\rceil^2+m_{13}\lceil l_2x_1-x_3\rceil^2-\frac{\lceil l_1x_1-x_2\rceil \lceil l_2x_1-x_3\rceil+\lceil l_2x_1-x_3\rceil \lceil l_1x_1-x_2\rceil}{\frac{2l_1l_2}{m_{12}l_1^2+m_{13}l_2^2}})}$$
is a subalgebra of $H(\mathcal{A})$. On the other hand, we have $\dim_kH^i(\mathcal{A})=i+1$.
Then we can conclude that $H(\mathcal{A})$ is $$\frac{k\langle \lceil l_1x_1-x_2\rceil, \lceil l_2x_1-x_3\rceil \rangle}{(m_{12}\lceil l_1x_1-x_2\rceil^2+m_{13}\lceil l_2x_1-x_3\rceil^2-\frac{\lceil l_1x_1-x_2\rceil \lceil l_2x_1-x_3\rceil+\lceil l_2x_1-x_3\rceil \lceil l_1x_1-x_2\rceil}{\frac{2l_1l_2}{m_{12}l_1^2+m_{13}l_2^2}})}.$$

(b) In this case, $m_{12}l_1^2+m_{13}l_2^2\neq m_{11}$ and $l_1l_2=0$. One sees that
$$\begin{cases}
(l_1x_1-x_2)^2=l_1^2x_1^2+x_2^2,\\
(l_2x_1-x_3)^2=l_2^2x_1^2+x_3^2,\\
(l_1x_1-x_2)(l_2x_1-x_3)+(l_2x_1-x_3)(l_1x_1-x_2)=2l_1l_2x_1^2=0.
\end{cases}$$

It is straight forward to check that $$Z^2(\mathcal{A})=kx_1^2\oplus k(l_1x_1-x_2)^2\oplus k(l_2x_1-x_3)^2\oplus k(l_1x_1-x_2)(l_2x_1-x_3).$$
Since
\begin{align*}
&\quad m_{12}(l_1x_1-x_2)^2+m_{13}(l_2x_1-x_3)^2-(m_{12}l_1^2+m_{13}l_2^2-m_{11})x_1^2\\
&=m_{12}x_2^2+m_{13}x_3^2+m_{11}x_1^2\in B^2(\mathcal{A}),
 \end{align*} we have
$$H^2(\mathcal{A})=k\lceil l_1x_1-x_2\rceil^2\oplus k\lceil l_2x_1-x_3\rceil^2\oplus k\lceil (l_1x_1-x_2)(l_2x_1-x_3)\rceil.$$
 Just as the proof of (a), we can show that
$$\frac{k\langle \lceil l_1x_1-x_2\rceil, \lceil l_2x_1-x_3\rceil \rangle}{(\lceil l_1x_1-x_2\rceil\lceil l_2x_1-x_3\rceil+\lceil l_2x_1-x_3\rceil \lceil l_1x_1-x_2\rceil)}$$
is a subalgebra of $H(\mathcal{A})$. On the other hand, we have $\dim_kH^i(\mathcal{A})=i+1$.
Then we can conclude that
$$H(\mathcal{A})=\frac{k\langle \lceil l_1x_1-x_2\rceil, \lceil l_2x_1-x_3\rceil \rangle}{(\lceil l_1x_1-x_2\rceil\lceil l_2x_1-x_3\rceil+\lceil l_2x_1-x_3\rceil \lceil l_1x_1-x_2\rceil)}.$$

(c)
In this case, $m_{12}l_1^2+m_{13}l_2^2=m_{11}$ and $l_1l_2\neq 0$. So we have
\begin{align*}
m_{12}(l_1x_1-x_2)^2+m_{13}(l_2x_1-x_3)^2 &=(m_{12}l_1^2+m_{13}l_2^2)x_1^2+m_{12}x_2^2+m_{13}x_3^2\\
                                          &=m_{11}x_1^2+m_{12}x_2^2+m_{13}x_3^2 =\partial_{\mathcal{A}}(x_1)
\end{align*}
and
$$\begin{cases}
(l_1x_1-x_2)(l_2x_1-x_3)+(l_2x_1-x_3)(l_1x_1-x_2)=2l_1l_2x_1^2\\
(l_1x_1-x_2)(l_2x_1-x_3)-(l_2x_1-x_3)(l_1x_1-x_2)=2[x_2x_3-l_1x_1x_3+l_2x_1x_2].
\end{cases}
$$
Hence $H^2(\mathcal{A})$ is
$$\frac{k(l_1x_1-x_2)(l_2x_1-x_3)\oplus k(l_2x_1-x_3)(l_1x_1-x_2)\oplus k(l_1x_1-x_2)^2\oplus k(l_2x_1-x_3)^2}{k[m_{12}(l_1^2x_1^2+x_2^2)+m_{13}(l_2^2x_1^2+x_3^2)]}.
$$
 Just as the proof of (a), we can show that $$\frac{k\langle \lceil l_1x_1-x_2\rceil, \lceil l_2x_1-x_3\rceil \rangle}{(m_{12}\lceil l_1x_1-x_2\rceil^2+m_{13}\lceil l_2x_1-x_3\rceil^2)}$$
is a subalgebra of $H(\mathcal{A})$. Since  $\dim_kH^i(\mathcal{A})=i+1$, we can conclude that
 $$H(\mathcal{A})=\frac{k\langle \lceil l_1x_1-x_2\rceil, \lceil l_2x_1-x_3\rceil \rangle}{(m_{12}\lceil l_1x_1-x_2\rceil^2+m_{13}\lceil l_2x_1-x_3\rceil^2)}.$$

(d)
Since $m_{12}l_1^2+m_{13}l_2^2=m_{11}$, $l_1\neq 0$ and $l_2= 0$, we have $m_{12}l_1^2=m_{11}$,
\begin{align*}
m_{12}(l_1x_1-x_2)^2+m_{13}x_3^2 &=m_{12}l_1^2x_1^2+m_{12}x_2^2+m_{13}x_3^2\\
                                          &=m_{11}x_1^2+m_{12}x_2^2+m_{13}x_3^2 =\partial_{\mathcal{A}}(x_1)
\end{align*}
and $(l_1x_1-x_2)x_3+z(l_1x_1-x_2)=l_1(x_1x_3+x_3x_1)-(x_2x_3+x_3x_2)=0$. Thus
$$H^2(\mathcal{A})=\frac{kx_3^2\oplus k(l_1^2x_1^2+x_2^2)\oplus k(l_1x_1-x_2)x_3\oplus kx_1^2}{k[m_{12}(l_1x_1-x_2)^2+m_{13}x_3^2]}.$$
 Just as the proof of (a), we can show that
 $$\frac{k\langle \lceil l_1x_1-x_2\rceil, \lceil x_3\rceil,\lceil x_1^2\rceil \rangle}{\left(
                                                                                                       \begin{array}{c}
                                                                                                         m_{12}\lceil l_1x_1-x_2\rceil^2+m_{13}\lceil x_3\rceil^2 \\
                                                                                                         \lceil x_1^2\rceil \lceil l_1x_1-x_2\rceil- \lceil l_1x_1-x_2\rceil \lceil x_1^2\rceil \\
                                                                                                       \lceil x_1^2\rceil \lceil x_3\rceil-\lceil x_3\rceil \lceil x_1^2\rceil   \\
                                                                                                       \lceil l_1x_1-x_2\rceil \lceil x_3\rceil +\lceil x_3\rceil \lceil l_1x_1-x_2\rceil
                                                                                                       \end{array}
                                                                                                     \right)}
$$
is a subalgebra of $H(\mathcal{A})$. Since $\dim_kH^i(\mathcal{A})=i+1$, we get
 $$H(\mathcal{A})=\frac{k\langle \lceil l_1x_1-x_2\rceil, \lceil x_3\rceil,\lceil x_1^2\rceil \rangle}{\left(
                                                                                                       \begin{array}{c}
                                                                                                         m_{12}\lceil l_1x_1-x_2\rceil^2+m_{13}\lceil x_3\rceil^2 \\
                                                                                                         \lceil x_1^2\rceil \lceil l_1x_1-x_2\rceil- \lceil l_1x_1-x_2\rceil \lceil x_1^2\rceil \\
                                                                                                       \lceil x_1^2\rceil \lceil x_3\rceil-\lceil x_3\rceil \lceil x_1^2\rceil   \\
                                                                                                       \lceil l_1x_1-x_2\rceil \lceil x_3\rceil +\lceil x_3\rceil \lceil l_1x_1-x_2\rceil
                                                                                                       \end{array}
                                                                                                     \right)}.$$

(e) In this case, we have $m_{12}l_1^2+m_{13}l_2^2=m_{11}$, $l_2\neq 0$ and $l_1= 0$.  So $m_{13}l_2^2=m_{11}$,
\begin{align*}
m_{13}(l_2x_1-x_3)^2+m_{12}x_2^2 &=m_{13}l_2^2x_1^2+m_{12}x_2^2+m_{13}x_3^2\\
                                          &=m_{11}x_1^2+m_{12}x_2^2+m_{13}x_3^2 =\partial_{\mathcal{A}}(x_1)
\end{align*}
and $(l_2x_1-x_3)x_2+x_2(l_2x_1-x_3)=l_2(x_1x_2+x_2x_1)-(x_2x_3+x_3x_2)=0$. Thus
$$H^2(\mathcal{A})=\frac{kx_2^2\oplus k(l_2^2x_1^2+x_3^2)\oplus k(l_2x_1-x_3)x_2\oplus kx_1^2}{k[m_{13}(l_2x_1-x_3)^2+m_{12}x_2^2]}.$$
 Just as the proof of (1), we can show that
 $$\frac{k\langle \lceil l_2x_1-x_3\rceil, \lceil x_2\rceil,\lceil x_1^2\rceil \rangle}{\left(
                                                                                                       \begin{array}{c}
                                                                                                         m_{13}\lceil l_2x_1-x_3\rceil^2+m_{12}\lceil x_2\rceil^2 \\
                                                                                                         \lceil x_1^2\rceil \lceil l_2x_1-x_3\rceil- \lceil l_2x_1-x_3\rceil \lceil x_1^2\rceil \\
                                                                                                       \lceil x_1^2\rceil \lceil x_2\rceil-\lceil x_2\rceil \lceil x_1^2\rceil   \\
                                                                                                       \lceil l_2x_1-x_3\rceil \lceil x_2\rceil +\lceil x_2\rceil \lceil l_2x_1-x_3\rceil
                                                                                                       \end{array}
                                                                                                     \right)}
$$
is a subalgebra of $H(\mathcal{A})$.  Since $\dim_kH^i(\mathcal{A})=i+1$, we have $$H(\mathcal{A})=\frac{k\langle \lceil l_2x_1-x_3\rceil, \lceil x_2\rceil,\lceil x_1^2\rceil \rangle}{\left(
                                                                                                       \begin{array}{c}
                                                                                                         m_{13}\lceil l_2x_1-x_3\rceil^2+m_{12}\lceil x_2\rceil^2 \\
                                                                                                         \lceil x_1^2\rceil \lceil l_2x_1-x_3\rceil- \lceil l_2x_1-x_3\rceil \lceil x_1^2\rceil \\
                                                                                                       \lceil x_1^2\rceil \lceil x_2\rceil-\lceil x_3\rceil \lceil x_1^2\rceil   \\
                                                                                                       \lceil l_2x_1-x_3\rceil \lceil x_2\rceil +\lceil x_2\rceil \lceil l_2x_1-x_3\rceil
                                                                                                       \end{array}
                                                                                                     \right)}.
$$

(f) In this case $m_{11}=0$,  and hence $\begin{cases}
\partial_{\mathcal{A}}(x_1)=m_{12}x_2^2+m_{13}x_3^2\\
\partial_{\mathcal{A}}(x_2)=0\\
\partial_{\mathcal{A}}(x_3)=0.
\end{cases}
$
So $$H^2(\mathcal{A})=\frac{kx_1^2\oplus kx_2^2\oplus kx_3^2\oplus kx_2x_3}{k(m_{12}x_2^2+m_{13}x_3^2)}.$$
 Just as the proof of (a), we can show that  $$\frac{k\langle \lceil x_3\rceil, \lceil x_2\rceil,\lceil x_1^2\rceil \rangle}{\left(
                                                                                                       \begin{array}{c}
                                                                                                         m_{12}\lceil x_2\rceil^2+m_{13}\lceil x_3\rceil^2 \\
                                                                                                         \lceil x_1^2\rceil \lceil x_3\rceil- \lceil x_3\rceil \lceil x_1^2\rceil \\
                                                                                                       \lceil x_1^2\rceil \lceil x_2\rceil-\lceil x_2\rceil \lceil x_1^2\rceil   \\
                                                                                                       \lceil x_3\rceil \lceil x_2\rceil +\lceil x_2\rceil \lceil x_3\rceil
                                                                                                       \end{array}
                                                                                                     \right)}
$$
is a subalgebra of $H(\mathcal{A})$. Since $\dim_kH^i(\mathcal{A})=i+1$, we conclude
$$H(\mathcal{A})=\frac{k\langle \lceil x_3\rceil, \lceil x_2\rceil,\lceil x_1^2\rceil \rangle}{\left(
                                                                                                       \begin{array}{c}
                                                                                                         m_{12}\lceil x_2\rceil^2+m_{13}\lceil x_3\rceil^2 \\
                                                                                                         \lceil x_1^2\rceil \lceil x_3\rceil- \lceil x_3\rceil \lceil x_1^2\rceil \\
                                                                                                       \lceil x_1^2\rceil \lceil x_2\rceil-\lceil x_2\rceil \lceil x_1^2\rceil   \\
                                                                                                       \lceil x_3\rceil \lceil x_2\rceil +\lceil x_2\rceil \lceil x_3\rceil
                                                                                                       \end{array}
                                                                                                     \right)}.
$$
\end{proof}

\section{some applications}
Let $\mathcal{A}$ be a connected cochain DG algebra such that its underlying graded algebra $\mathcal{A}^{\#}$ is the graded skew polynomial algebra
$$k\langle x_1,x_2, x_3\rangle/\left(\begin{array}{ccc}
x_1x_2+x_2x_1\\
x_2x_3+x_3x_2\\
x_3x_1+x_1x_3\\
                                 \end {array}\right), |x_1|=|x_2|=|x_3|=1.$$
Then
 $\partial_{\mathcal{A}}$ is determined by a matrix $M\in M_3(k)$ such that
\begin{align*}
\left(
                         \begin{array}{c}
                           \partial_{\mathcal{A}}(x_1)\\
                           \partial_{\mathcal{A}}(x_2)\\
                           \partial_{\mathcal{A}}(x_3)
                         \end{array}
                       \right)=M\left(
                         \begin{array}{c}
                           x_1^2\\
                           x_2^2\\
                           x_3^2
                         \end{array}
                       \right),\text{for some}\, M\in M_3(k).
\end{align*}
By the computations in Section \ref{cohomology},  we reach the following conclusion.
\begin{prop}\label{rank1}
$H(\mathcal{A})$ is an AS-Gorenstein graded algebra when $r(M)\neq 1$.
\end{prop}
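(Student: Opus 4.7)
The plan is to split according to the value of $r(M)$ and invoke the structural descriptions of $H(\mathcal{A})$ already obtained in Section \ref{cohomology}, combined with the ascent result for AS-Gorensteinness under adjoining a degree $2$ central polynomial variable (Lemma \ref{extaslem}). Since we assume $r(M)\neq 1$, only the cases $r(M)=3$ and $r(M)=2$ need attention, and in each case Propositions \ref{rank3} and \ref{rmtwoimc} tell us exactly what $H(\mathcal{A})$ looks like.

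First I would dispose of the case $r(M)=3$. By Proposition \ref{rank3}, $H(\mathcal{A})=k$, and the trivial graded algebra $k$ is tautologically AS-Gorenstein: its minimal free resolution of $k$ is $k$ itself in degree $0$, so $\dim_k\mathrm{Ext}^*_k(k,k)=1$ and $\mathrm{id}_kk=0$.

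Next I would handle $r(M)=2$. By Proposition \ref{rmtwoimc}, $H(\mathcal{A})$ is of one of two forms. If $s_1t_1^2+s_2t_2^2+s_3t_3^2\neq 0$ then $H(\mathcal{A})=k[\lceil t_1x_1+t_2x_2+t_3x_3\rceil]$ is a polynomial algebra in a single degree $1$ variable, which has global dimension one with minimal resolution $0\to k[y](-1)\xrightarrow{y}k[y]\to k\to 0$; applying $\Hom_{k[y]}(-,k[y])$ yields $\mathrm{Ext}^*_{k[y]}(k,k[y])=k(1)$, confirming that $k[y]$ is AS-regular and hence AS-Gorenstein. If instead $s_1t_1^2+s_2t_2^2+s_3t_3^2=0$, then with $y=\lceil t_1x_1+t_2x_2+t_3x_3\rceil$ of degree $1$ and $z=\lceil s_1x_1^2+s_2x_2^2+s_3x_3^2\rceil$ of degree $2$, the algebra is $H(\mathcal{A})=k\langle y,z\rangle/(y^2, yz-zy)$, where the commutation relation comes for free because $z$ is represented by a central cocycle (Lemma \ref{centcocy}). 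So $H(\mathcal{A})=\Lambda[z]$ with $\Lambda=k[y]/(y^2)$, $|y|=1$.

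The remaining point is to argue that $\Lambda=k[y]/(y^2)$ (with $|y|=1$) is Noetherian and AS-Gorenstein, since then Lemma \ref{extaslem} immediately promotes this to $\Lambda[z]=H(\mathcal{A})$. Noetherianness is clear as $\Lambda$ is finite-dimensional. For the AS-Gorenstein property, I would compute directly: the minimal free resolution of ${}_\Lambda k$ is the two-periodic complex $\cdots\xrightarrow{y}\Lambda(-n)\xrightarrow{y}\cdots\xrightarrow{y}\Lambda\to k\to 0$, and applying $\Hom_\Lambda(-,\Lambda)$ gives a complex whose differential is again multiplication by $y$; since $\mathrm{ann}_\Lambda(y)=ky=y\Lambda$, the cohomology in every positive degree vanishes and $\mathrm{Ext}^*_\Lambda(k,\Lambda)=ky$ sits in cohomological degree $0$ (concentrated in internal degree $1$). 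Thus $\dim_k\mathrm{Ext}^*_\Lambda(k,\Lambda)=1$, and $\Lambda$ is self-injective so $\mathrm{id}_\Lambda\Lambda=0$, giving AS-Gorensteinness; the same argument on the right shows $\Lambda$ is also right AS-Gorenstein. Lemma \ref{extaslem} then finishes the case. No serious obstacle is expected; the only nontrivial step is the direct verification that the exterior algebra on a single odd generator is AS-Gorenstein, which is a short calculation as above.
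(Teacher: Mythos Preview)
Your argument for $r(M)=3$ and $r(M)=2$ is correct and matches the paper's approach almost exactly: invoke Propositions \ref{rank3} and \ref{rmtwoimc}, then in the second subcase of $r(M)=2$ factor $H(\mathcal{A})$ as $\bigl(k[y]/(y^2)\bigr)[z]$ and appeal to Lemma \ref{extaslem}. Your direct verification that $k[y]/(y^2)$ is AS-Gorenstein is a little more explicit than the paper's, but the content is the same.

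However, there is a genuine gap: the hypothesis $r(M)\neq 1$ allows $r(M)\in\{0,2,3\}$, and you have omitted the case $r(M)=0$. When $M=0$ the differential $\partial_{\mathcal{A}}$ vanishes, so $H(\mathcal{A})=\mathcal{A}^{\#}$ is the graded skew polynomial algebra on three degree $1$ generators, which is AS-regular of global dimension $3$ and hence AS-Gorenstein. The paper's proof opens with exactly this observation. The fix is a one-line addition, but as written your case split ``only the cases $r(M)=3$ and $r(M)=2$ need attention'' is false.
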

\begin{proof}

If $r(M)=0$, then $H(\mathcal{A})=\mathcal{A}^{\#}$ is obviously an AS-Gorenstein graded algebra since $\mathcal{A}^{\#}$ is an AS-regular algebra of dimension $3$.
By Proposition \ref{rank3}, we have $H(\mathcal{A})=k$ if $r(M)=3$. So the statement of the proposition is also right when $r(M)=3$.

For the case $r(M)=2$, let $k(s_1,s_2,s_3)^T$ and $k(t_1,t_2,t_3)^T$ be the solution spaces of  homogeneous linear equations $MX=0$ and $M^TX=0$, respectively.
By Proposition \ref{rmtwoimc},  $H(\mathcal{A})=k[\lceil t_1x_1 +t_2x_2+t_3x_3\rceil]$ if $s_1t_1^2+s_2t_2^2+s_3t_3^2\neq 0$; and  $H(\mathcal{A})$ equals to
$$k[\lceil t_1x_1 +t_2x_2+t_3x_3\rceil,\lceil s_1x_1^2+s_2x_2^2+s_3x_3^2\rceil ]/(\lceil t_1x_1 +t_2x_2+t_3x_3\rceil^2)$$ when $s_1t_1^2+s_2t_2^2+s_3t_3^2= 0$.
Since \begin{align*}
&\quad k[\lceil t_1x_1 +t_2x_2+t_3x_3\rceil,\lceil s_1x_1^2+s_2x_2^2+s_3x_3^2\rceil ]/(\lceil t_1x_1 +t_2x_2+t_3x_3\rceil^2)\\
&\cong \frac{k[\lceil t_1x_1 +t_2x_2+t_3x_3\rceil]}{(\lceil t_1x_1 +t_2x_2+t_3x_3\rceil^2)}[\lceil s_1x_1^2+s_2x_2^2+s_3x_3^2\rceil],
\end{align*}
it is AS-Gorenstein by Lemma \ref{extaslem}. Thus $H(\mathcal{A})$ is an AS-Gorenstein graded algebra when $r(M)=2$.
\end{proof}

Now, it remains to consider the case that $r(M)=1$. We may assume that $$M=\left(
                                 \begin{array}{ccc}
                                   m_{11} & m_{12} & m_{13} \\
                                   l_1m_{11} & l_1m_{12} & l_1m_{13} \\
                                   l_2m_{11} & l_2m_{12} & l_2m_{13} \\
                                 \end{array}
                               \right),\,\,\text{with}\,\, l_1,l_2\in k \,\, \text{and}\,\, (m_{11}, m_{12},m_{13})\neq 0.$$
We have the following proposition.
\begin{prop}\label{ASgor}
The graded algebra $H(\mathcal{A})$ is AS-Gorenstein if we have any one of the following conditions:
\begin{enumerate}
\item $m_{12}l_1^2+m_{13}l_2^2\neq m_{11}$ and $l_1l_2=0$;
\item  $m_{12}l_1^2+m_{13}l_2^2=m_{11}$, $l_1\neq 0$ and $l_2= 0$;
\item   $m_{12}l_1^2+m_{13}l_2^2=m_{11}$, $l_2\neq 0$ and $l_1= 0$;
\item  $m_{12}l_1^2+m_{13}l_2^2=m_{11}$, $l_1= 0$ and $l_2= 0$;
\item  $m_{12}l_1^2+m_{13}l_2^2=m_{11},l_1l_2\neq 0$ and  $m_{12}m_{13}\neq 0$;
\item $m_{12}l_1^2+m_{13}l_2^2\neq m_{11},l_1l_2\neq 0$ and $4m_{12}m_{13}l_1^2l_2^2\neq (m_{12}l_1^2+m_{13}l_2^2-m_{11})^2$.
\end{enumerate}
\end{prop}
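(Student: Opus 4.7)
My plan is to invoke Proposition~\ref{rankone} case by case, extract the explicit presentation of $H(\mathcal{A})$, and then verify the AS-Gorenstein condition either directly or via Lemma~\ref{extaslem}. The six cases fall naturally into two groups.

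\emph{Group A (cases (1), (5), (6)).} In each case Proposition~\ref{rankone} presents $H(\mathcal{A})$ as a two-generator, one-relation quadratic algebra $k\langle a,b\rangle/(f)$, where $a,b$ are the indicated degree-$1$ cocycles and $f$ is a single quadratic form on $V=ka\oplus kb$. A direct matrix calculation shows that the symmetric matrix of $f$ in the basis $\{a^2,ab,ba,b^2\}$ is non-degenerate in each case: determinant $-1$ in case (1) where $f=ab+ba$; determinant $m_{12}m_{13}$ in case (5); and (after rewriting the relation as $\mu(m_{12}a^2+m_{13}b^2)-(ab+ba)=0$ with $\mu=2l_1l_2/(m_{12}l_1^2+m_{13}l_2^2-m_{11})$) determinant $\mu^2m_{12}m_{13}-1$, which is a non-zero scalar multiple of $4m_{12}m_{13}l_1^2l_2^2-(m_{12}l_1^2+m_{13}l_2^2-m_{11})^2$, nonvanishing exactly by the case~(6) hypothesis. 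Since $k$ is algebraically closed of characteristic zero, any non-degenerate symmetric form in two variables diagonalizes to $(a')^2+(b')^2$, and the substitution $u=a'+ib'$, $v=a'-ib'$ converts this into $(uv+vu)/2$. Hence $H(\mathcal{A})\cong k\langle u,v\rangle/(uv+vu)$, the $(-1)$-skew polynomial ring, which is a Noetherian Koszul AS-regular algebra of global dimension $2$, and in particular AS-Gorenstein.

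\emph{Group B (cases (2), (3), (4)).} Here Proposition~\ref{rankone}(d)(e)(f) exhibits $\lceil x_1^2\rceil$ as a central degree-$2$ generator, and the remaining relations yield
$$H(\mathcal{A})\;\cong\;B\,[\lceil x_1^2\rceil], \qquad B \;=\; \frac{k\langle a,b\rangle}{(m_{12}a^2+m_{13}b^2,\;ab+ba)},$$
for appropriate degree-$1$ cocycles $a,b$, with $(m_{12},m_{13})\neq(0,0)$ (forced by $(m_{11},m_{12},m_{13})\neq 0$ together with $m_{11}=m_{12}l_1^2+m_{13}l_2^2$ in these cases). Both $a^2$ and $b^2$ are central in the $(-1)$-skew polynomial ring $k\langle a,b\rangle/(ab+ba)$; I will verify that at least one of them survives as a non-zero central degree-$2$ element $c'\in B$ in each of the three sub-cases for $(m_{12},m_{13})$, giving $B\cong \bar B[c']$ where $\bar B:=B/(c')=k\langle a,b\rangle/(a^2,b^2,ab+ba)$. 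The algebra $\bar B$ has $k$-basis $\{1,a,b,ab\}$, Hilbert series $1+2t+t^{2}$, and one-dimensional socle $k\cdot ab$ concentrated in the top degree, so $\bar B$ is a finite-dimensional connected graded Frobenius algebra, hence self-injective with $\dim_k\mathrm{Ext}^*_{\bar B}(k,\bar B)=1$ and $\mathrm{id}_{\bar B}\bar B=0$; thus $\bar B$ is Noetherian AS-Gorenstein. Two applications of Lemma~\ref{extaslem} (adjoining $c'$ of degree $2$ and then $\lceil x_1^2\rceil$ of degree $2$) show that $H(\mathcal{A})=\bar B[c'][\lceil x_1^2\rceil]$ is Noetherian AS-Gorenstein.

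The main obstacle is the Group-B reduction: one must check that a non-zero central $c'\in\{a^2,b^2\}$ exists in $B$ in all three sub-cases (namely $m_{12}m_{13}\neq 0$, $m_{12}=0\neq m_{13}$, and $m_{13}=0\neq m_{12}$), and that the quotient $\bar B$ is the same $4$-dimensional Frobenius algebra in each; this amounts to a careful but elementary inspection of the relations. Once this is settled, the Frobenius property of $\bar B$ is immediate from the socle computation, and the Group-A cases reduce to the standard fact that an AS-regular algebra of global dimension $2$ is AS-Gorenstein.
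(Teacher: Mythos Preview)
Your Group~A argument is correct and essentially matches the paper's approach: the paper cites \cite[Proposition~1.1]{Zhang} to conclude that the one-relator algebras in cases (5) and (6) are AS-regular under the stated non-degeneracy conditions, while you make the linear change of variables explicit. Either way the conclusion is that $H(\mathcal{A})$ is AS-regular of global dimension~$2$.

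Your Group~B argument, however, contains a genuine error. Having a non-zero central element $c'\in\{a^2,b^2\}$ in $B$ that is a non-zerodivisor with $B/(c')=\bar B$ does \emph{not} give $B\cong\bar B[c']$. Concretely, take $m_{12}=1$, $m_{13}=0$, so $B=k\langle a,b\rangle/(a^2,\,ab+ba)$ and $c'=b^2$. In $\bar B[c']$ every degree-$1$ element squares to zero (since $a^2=b^2=ab+ba=0$ in $\bar B$), so any graded algebra map $B\to\bar B[c']$ kills $b^2$; but $b^2\neq 0$ in $B$, so no graded isomorphism exists. The same obstruction occurs when $m_{12}m_{13}\neq 0$. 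Consequently you cannot invoke Lemma~\ref{extaslem} for the passage from $\bar B$ to $B$.

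The paper's route avoids this: it views $B$ as the quotient of the AS-regular algebra $S=k\langle a,b\rangle/(ab+ba)$ by the central regular element $m_{12}a^2+m_{13}b^2\in S$ (regular since $S$ is a domain and $(m_{12},m_{13})\neq(0,0)$), and appeals to the Rees Lemma to conclude that $B$ is AS-Gorenstein. Then a \emph{single} application of Lemma~\ref{extaslem} handles the polynomial variable $\lceil x_1^2\rceil$. Your argument can be repaired by replacing the false isomorphism with this Rees-type step (or, equivalently, by proving directly that if $R$ is graded with central regular $c'$ and $R/(c')$ is AS-Gorenstein, then so is $R$); note that your Frobenius computation for $\bar B$ would then play the same role as the paper's appeal to the AS-regularity of $S$, just from the other side of the regular element.
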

\begin{proof}
By Proposition \ref{rankone}(b), we have $$H(\mathcal{A})=\frac{k\langle \lceil l_1x_1-x_2\rceil, \lceil l_2x_1-x_3\rceil \rangle}{(\lceil l_1x_1-x_2\rceil\lceil l_2x_1-x_3\rceil+\lceil l_2x_1-x_3\rceil \lceil l_1x_1-x_2\rceil)},$$
when $m_{12}l_1^2+m_{13}l_2^2\neq m_{11}$ and $l_1l_2=0$. In this case, $H(\mathcal{A})$ is an AS-regular graded algebra of dimension $2$.

By Proposition \ref{rankone}(d),   $$H(\mathcal{A})=\frac{k\langle \lceil l_1x_1-x_2\rceil, \lceil x_3\rceil,\lceil x_1^2\rceil \rangle}{\left(
                                                                                                       \begin{array}{c}
                                                                                                         m_{12}\lceil l_1x_1-x_2\rceil^2+m_{13}\lceil x_3\rceil^2 \\
                                                                                                         \lceil x_1^2\rceil \lceil l_1x_1-x_2\rceil- \lceil l_1x_1-x_2\rceil \lceil x_1^2\rceil \\
                                                                                                       \lceil x_1^2\rceil \lceil x_3\rceil-\lceil x_3\rceil \lceil x_1^2\rceil   \\
                                                                                                       \lceil l_1x_1-x_2\rceil \lceil x_3\rceil +\lceil x_3\rceil \lceil l_1x_1-x_2\rceil
                                                                                                       \end{array}
                                                                                                     \right)}$$
when  $m_{12}l_1^2+m_{13}l_2^2=m_{11}$, $l_1\neq 0$ and $l_2= 0$. We have
\begin{align*}
H(\mathcal{A})&=\frac{k\langle \lceil l_1x_1-x_2\rceil, \lceil x_3\rceil,\lceil x_1^2\rceil \rangle}{\left(
                                                                                                       \begin{array}{c}
                                                                                                         m_{12}\lceil l_1x_1-x_2\rceil^2+m_{13}\lceil x_3\rceil^2 \\
                                                                                                         \lceil x_1^2\rceil \lceil l_1x_1-x_2\rceil- \lceil l_1x_1-x_2\rceil \lceil x_1^2\rceil \\
                                                                                                       \lceil x_1^2\rceil \lceil x_3\rceil-\lceil x_3\rceil \lceil x_1^2\rceil   \\
                                                                                                       \lceil l_1x_1-x_2\rceil \lceil x_3\rceil +\lceil x_3\rceil \lceil l_1x_1-x_2\rceil
                                                                                                       \end{array}
                                                                                                     \right)}\\
&\cong \frac{k\langle \lceil l_1x_1-x_2\rceil, \lceil x_3\rceil \rangle}{\left(
                                                                                                       \begin{array}{c}
                                                                                                         m_{12}\lceil l_1x_1-x_2\rceil^2+m_{13}\lceil x_3\rceil^2 \\
                                                                                                       \lceil l_1x_1-x_2\rceil \lceil x_3\rceil +\lceil x_3\rceil \lceil l_1x_1-x_2\rceil
                                                                                                       \end{array}
                                                                                                     \right)}[\lceil x_1^2\rceil].
\end{align*}
By Rees Lemma, one sees that $$\frac{k\langle \lceil l_1x_1-x_2\rceil, \lceil x_3\rceil \rangle}{\left(
                                                                                                       \begin{array}{c}
                                                                                                         m_{12}\lceil l_1x_1-x_2\rceil^2+m_{13}\lceil x_3\rceil^2 \\
                                                                                                       \lceil l_1x_1-x_2\rceil \lceil x_3\rceil +\lceil x_3\rceil \lceil l_1x_1-x_2\rceil
                                                                                                       \end{array}
                                                                                                     \right)}$$ is AS-Gorenstein. Applying Lemma \ref{extaslem}, we get that $H(\mathcal{A})$ is AS-Gorenstein.
By Proposition \ref{rankone}(e) and (f),  we can similarly show that $H(\mathcal{A})$ is  AS-Gorenstein if we have either
$$m_{12}l_1^2+m_{13}l_2^2=m_{11}, l_2\neq 0, l_1=0$$ or
$$m_{12}l_1^2+m_{13}l_2^2=m_{11}, l_1= 0, l_2= 0.$$

When $m_{12}l_1^2+m_{13}l_2^2=m_{11},l_1l_2\neq 0$ and  $m_{12}m_{13}\neq 0$, we have
$$H(\mathcal{A})=\frac{k\langle \lceil l_1x_1-x_2\rceil, \lceil l_2x_1-x_3\rceil \rangle}{(m_{12}\lceil l_1x_1-x_2\rceil^2+m_{13}\lceil l_2x_1-x_3\rceil^2)}$$
by Proposition \ref{rankone}(c). Since $m_{12}m_{13}\neq 0$, the graded algebra $H(\mathcal{A})$ is AS-regular by \cite[Proposition 1.1]{Zhang}.

When $m_{12}l_1^2+m_{13}l_2^2\neq m_{11},l_1l_2\neq 0$ and $4m_{12}m_{13}l_1^2l_2^2\neq (m_{12}l_1^2+m_{13}l_2^2-m_{11})^2$, the graded algebra $H(\mathcal{A})$ is
$$\frac{k\langle \lceil l_1x_1-x_2\rceil, \lceil l_2x_1-x_3\rceil \rangle}{(m_{12}\lceil l_1x_1-x_2\rceil^2+m_{13}\lceil l_2x_1-x_3\rceil^2-\frac{\lceil l_1x_1-x_2\rceil \lceil l_2x_1-x_3\rceil+\lceil l_2x_1-x_3\rceil \lceil l_1x_1-x_2\rceil}{\frac{2l_1l_2}{m_{12}l_1^2+m_{13}l_2^2}})}$$
by Proposition \ref{rankone}(a). Since $4m_{12}m_{13}l_1^2l_2^2\neq (m_{12}l_1^2+m_{13}l_2^2-m_{11})^2$, one sees that $H(\mathcal{A})$ is AS-regular by \cite[Proposition 1.1]{Zhang}.
\end{proof}

\begin{thm}\label{nonasgoren}
Let $\mathcal{A}$ be a connected cochain DG algebra such that
$$\mathcal{A}^{\#}=k\langle x_1,x_2, x_3\rangle/\left(\begin{array}{ccc}
x_1x_2+x_2x_1\\
x_2x_3+x_3x_2\\
x_3x_1+x_1x_3\\
                                 \end {array}\right), |x_1|=|x_2|=|x_3|=1,$$
and $\partial_A$ is determined by
\begin{align*}
\left(
                         \begin{array}{c}
                           \partial_{\mathcal{A}}(x_1)\\
                           \partial_{\mathcal{A}}(x_2)\\
                           \partial_{\mathcal{A}}(x_3)
                         \end{array}
                       \right)=N\left(
                         \begin{array}{c}
                           x_1^2\\
                           x_2^2\\
                           x_3^2
                         \end{array}
                       \right).
\end{align*}
Then the graded algebra $H(\mathcal{A})$ is not left (right) Gorenstein if and only if
there exists some  $C=(c_{ij})_{3\times 3}\in \mathrm{QPL}_3(k)$ satisfying $N=C^{-1}M(c_{ij}^2)_{3\times 3}$,
where
$$M=\left(
                                 \begin{array}{ccc}
                                   1 & 1 & 0 \\
                                   1 & 1 & 0 \\
                                   1 & 1 & 0 \\
                                 \end{array}
                               \right)
\,\,\text{or}\,\,M=\left(
                                 \begin{array}{ccc}
                                   m_{11} & m_{12} & m_{13} \\
                                   l_1m_{11} & l_1m_{12} & l_1m_{13} \\
                                   l_2m_{11} & l_2m_{12} & l_2m_{13} \\
                                 \end{array}
                               \right)$$ with $m_{12}l_1^2+m_{13}l_2^2\neq m_{11}, l_1l_2\neq 0$ and $4m_{12}m_{13}l_1^2l_2^2= (m_{12}l_1^2+m_{13}l_2^2-m_{11})^2$.
\end{thm}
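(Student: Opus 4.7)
My plan is to combine the case-by-case computation of $H(\mathcal{A})$ from Section \ref{cohomology} with the observation that $\mathrm{QPL}_3(k)$-transformations act by DG isomorphisms on this family of DG algebras, making left (right) Gorenstein-ness an orbit invariant. For any $C=(c_{ij})\in\mathrm{QPL}_3(k)$ (i.e., a monomial invertible $3\times 3$ matrix over $k$), the map $y_i\mapsto\sum_j c_{ij}x_j$ is a graded algebra automorphism of the underlying skew polynomial algebra---the monomial structure of $C$ ensures $y_iy_j+y_jy_i=0$, and the identity $x_jx_k+x_kx_j=0$ for $j\neq k$ gives $y_i^2=\sum_j c_{ij}^2 x_j^2$. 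A direct calculation shows this automorphism intertwines the differentials defined by matrices $M$ and $N$ precisely when $CN=MC^{(2)}$ with $C^{(2)}=(c_{ij}^2)$; equivalently, $N=C^{-1}MC^{(2)}$, and then $H(\mathcal{A}_N)\cong H(\mathcal{A}_M)$ as graded algebras.

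I would then use Proposition \ref{rank1} to reduce to $r(N)=1$, because $H(\mathcal{A})$ is AS-Gorenstein (hence left and right Gorenstein) whenever $r(N)\in\{0,2,3\}$. When $r(N)=1$, by \cite[Remark 5.4]{MWZ} together with a row-swap permutation in $\mathrm{QPL}_3(k)$ I may assume $N$ has the standard form $N'$ with rows $(1,l_1,l_2)^T(m_{11},m_{12},m_{13})$ and $(m_{11},m_{12},m_{13})\neq 0$. Proposition \ref{ASgor} then shows that $H(\mathcal{A}_{N'})$ is AS-Gorenstein except in exactly two subcases: (A) $m_{12}l_1^2+m_{13}l_2^2\neq m_{11}$, $l_1l_2\neq 0$, $4m_{12}m_{13}l_1^2l_2^2=(m_{12}l_1^2+m_{13}l_2^2-m_{11})^2$; and (C) $m_{12}l_1^2+m_{13}l_2^2=m_{11}$, $l_1l_2\neq 0$, $m_{12}m_{13}=0$. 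In (A), Proposition \ref{rankone}(a) gives $H(\mathcal{A})\cong k\langle u,v\rangle/(m_{12}u^2+m_{13}v^2-c(uv+vu))$ with $c=\tfrac{m_{12}l_1^2+m_{13}l_2^2-m_{11}}{2l_1l_2}$ satisfying $c^2=m_{12}m_{13}>0$ (vanishing would contradict $m_{12}l_1^2+m_{13}l_2^2\neq m_{11}$), so the defining relation matches that of Lemma \ref{nongorone}, which forces non-Gorenstein-ness. In (C), Proposition \ref{rankone}(c) yields $H(\mathcal{A})\cong k\langle u,v\rangle/(m_{12}u^2+m_{13}v^2)$ with $m_{12}m_{13}=0$ and $(m_{12},m_{13})\neq 0$, a non-Gorenstein algebra by Lemma \ref{nongorsec}.

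To close the only-if direction, subcase (A) already displays $N'$ as the theorem's second canonical matrix, so $C=I$ works. For subcase (C), after possibly swapping $x_2\leftrightarrow x_3$ via a permutation in $\mathrm{QPL}_3(k)$ I may assume $m_{13}=0$, so $m_{12}\neq 0$ and $m_{11}=m_{12}l_1^2$; a direct substitution then shows the diagonal $C=\mathrm{diag}(m_{11},\,m_{11}/l_1,\,m_{11}/l_2)\in\mathrm{QPL}_3(k)$ satisfies $C^{-1}N'C^{(2)}=M$, the theorem's first canonical matrix. The if-direction is then immediate from the invariance in the first paragraph: for either canonical $M$, the cohomology $H(\mathcal{A}_M)$ has the form computed by Proposition \ref{rankone}(a) or (c), and is non-Gorenstein by Lemma \ref{nongorone} or \ref{nongorsec}. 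The main technical point I expect to dwell on is the transformation law $N=C^{-1}MC^{(2)}$ together with the identity $(C_1C_2)^{(2)}=C_1^{(2)}C_2^{(2)}$ on monomial matrices, so that successive reductions (a permutation followed by a diagonal rescaling) compose into a single element of $\mathrm{QPL}_3(k)$; once that is in place, the rest is a direct bookkeeping against Proposition \ref{ASgor} and the formulas of Proposition \ref{rankone}.
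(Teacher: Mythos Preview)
Your proposal is correct and follows essentially the same route as the paper: reduce to $r(N)=1$ via Proposition~\ref{rank1}, normalize via \cite[Remark~5.4]{MWZ}, use Proposition~\ref{ASgor} to isolate the two non-Gorenstein subcases, and then apply Lemmas~\ref{nongorone} and~\ref{nongorsec} to the explicit presentations from Proposition~\ref{rankone}(a),(c); the paper cites \cite[Theorem~B]{MWZ} and \cite[Proposition~5.8]{MWZ} where you instead sketch the $\mathrm{QPL}_3(k)$-action and an explicit diagonal $C$. One small slip: your proposed $C=\mathrm{diag}(m_{11},\,m_{11}/l_1,\,m_{11}/l_2)$ does not actually satisfy $C^{-1}N'C^{(2)}=M$ (the $(1,1)$-entry comes out $m_{11}^2$, not $1$); a correct choice is $C=\mathrm{diag}(1/m_{11},\,1/(l_1m_{12}),\,l_2/m_{11})$, and in any case the paper simply invokes \cite[Proposition~5.8]{MWZ} for this step.
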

\begin{proof}
First, let us prove the `if' part.  Suppose that there exists some  $C=(c_{ij})_{3\times 3}\in \mathrm{QPL}_3(k)$ satisfying $N=C^{-1}M(c_{ij}^2)_{3\times 3}$,
where $$M=\left(
                                 \begin{array}{ccc}
                                   1 & 1 & 0 \\
                                   1 & 1 & 0 \\
                                   1 & 1 & 0 \\
                                 \end{array}
                               \right)
\,\,\text{or}\,\,M=\left(
                                 \begin{array}{ccc}
                                   m_{11} & m_{12} & m_{13} \\
                                   l_1m_{11} & l_1m_{12} & l_1m_{13} \\
                                   l_2m_{11} & l_2m_{12} & l_2m_{13} \\
                                 \end{array}
                               \right)$$ with $m_{12}l_1^2+m_{13}l_2^2\neq m_{11}, l_1l_2\neq 0$ and $4m_{12}m_{13}l_1^2l_2^2= (m_{12}l_1^2+m_{13}l_2^2-m_{11})^2$. Note that $\mathcal{A}=\mathcal{A}_{\mathcal{O}_{-1}(k^3)}(N)$.
                               In both cases, $\mathcal{A}_{\mathcal{O}_{-1}(k^3)}(M)\cong \mathcal{A}_{\mathcal{O}_{-1}(k^3)}(N)$ by \cite[Theorem B]{MWZ}.
                               When $M=\left(
                                 \begin{array}{ccc}
                                   1 & 1 & 0 \\
                                   1 & 1 & 0 \\
                                   1 & 1 & 0 \\
                                 \end{array}
                               \right)$, we have $$H(\mathcal{A}_{\mathcal{O}_{-1}(k^3)}(M))=\frac{k\langle \lceil x_1-x_2\rceil,\lceil x_1-x_3\rceil\rangle}{(\lceil x_1-x_2\rceil^2)}$$ by Proposition \ref{rankone}(c). By Lemma \ref{nongorsec}, $H(\mathcal{A}_{\mathcal{O}_{-1}(k^3)}(M))$ is not left (right) Gorenstein.
                               If $$M=\left(
                                 \begin{array}{ccc}
                                   m_{11} & m_{12} & m_{13} \\
                                   l_1m_{11} & l_1m_{12} & l_1m_{13} \\
                                   l_2m_{11} & l_2m_{12} & l_2m_{13} \\
                                 \end{array}
                               \right), m_{12}l_1^2+m_{13}l_2^2\neq m_{11}, l_1l_2\neq 0$$ and $4m_{12}m_{13}l_1^2l_2^2= (m_{12}l_1^2+m_{13}l_2^2-m_{11})^2$, then $H(\mathcal{A}_{\mathcal{O}_{-1}(k^3)}(M))$ is  $$\frac{k\langle \lceil l_1x_1-x_2\rceil, \lceil l_2x_1-x_3\rceil \rangle}{(m_{12}\lceil l_1x_1-x_2\rceil^2+m_{13}\lceil l_2x_1-x_3\rceil^2-\frac{\lceil l_1x_1-x_2\rceil \lceil l_2x_1-x_3\rceil+\lceil l_2x_1-x_3\rceil \lceil l_1x_1-x_2\rceil}{\frac{2l_1l_2}{m_{12}l_1^2+m_{13}l_2^2}})}$$ by Proposition \ref{rankone}(a). Since $4m_{12}m_{13}l_1^2l_2^2= (m_{12}l_1^2+m_{13}l_2^2-m_{11})^2$, the graded algebra $H(\mathcal{A}_{\mathcal{O}_{-1}(k^3)}(M))$ is not left (right) graded Gorenstein by Lemma \ref{nongorone}. Thus $H(\mathcal{A})$ is not left (right) graded Gorenstein in both cases.

                                It remains to show the `only if' part.
If $H(\mathcal{A}_{\mathcal{O}_{-1}(k^3)}(N))$ is not left (right) Gorenstein, then $r(N)=1$ by Proposition \ref{rank1}. By \cite[Remark 5.4]{MWZ}, we have $\mathcal{A}_{\mathcal{O}_{-1}(k^3)}(N)\cong \mathcal{A}_{\mathcal{O}_{-1}(k^3)}(M)$, where
$$M=\left(
                                 \begin{array}{ccc}
                                   m_{11} & m_{12} & m_{13} \\
                                   l_1m_{11} & l_1m_{12} & l_1m_{13} \\
                                   l_2m_{11} & l_2m_{12} & l_2m_{13} \\
                                 \end{array}
                               \right),$$
 $(0,0,0)\neq (m_{11},m_{12},m_{13})\in k^3$ and  $l_1,l_2\in k$. By Proposition \ref{rankone}(d-f) and Proposition \ref{ASgor},  we have either
  $$l_1l_2\neq 0, m_{12}m_{13}=0\,\,\text{and} \,\, m_{12}l_1^2+m_{13}l_2^2= m_{11}$$
 or $$l_1l_2\neq 0, m_{12}l_1^2+m_{13}l_2^2\neq m_{11}, 4m_{12}m_{13}l_1^2l_2^2=(m_{12}l_1^2+m_{13}l_2^2-m_{11})^2.$$
By \cite[Proposition 5.8]{MWZ}, there exists $B=(b_{ij})_{3\times 3}\in \mathrm{QPL}_3(k)$ such that
$$ B^{-1}M(b_{ij}^2)_{3\times 3}=\left(
                                 \begin{array}{ccc}
                                   1 & 1 & 0 \\
                                   1 & 1 & 0 \\
                                   1 & 1 & 0 \\
                                 \end{array}
                               \right),$$
if
$l_1l_2\neq 0, m_{12}m_{13}=0\,\,\text{and} \,\, m_{12}l_1^2+m_{13}l_2^2= m_{11}$. In this case,
$$\mathcal{A}_{\mathcal{O}_{-1}(k^3)}(N)\cong \mathcal{A}_{\mathcal{O}_{-1}(k^3)}(M)\cong \mathcal{A}_{\mathcal{O}_{-1}(k^3)}(Q)$$ by \cite[Theorem B]{MWZ}, where $$Q=\left(
                                 \begin{array}{ccc}
                                   1 & 1 & 0 \\
                                   1 & 1 & 0 \\
                                   1 & 1 & 0 \\
                                 \end{array}
                               \right).$$
\end{proof}

Now, we get the following concrete counter examples to disprove Conjecture \ref{biproduct}.
\begin{ex}\label{countex}
Let $\mathcal{A}$ be a connected cochain DG algebra such that
$$\mathcal{A}^{\#}=k\langle x_1,x_2, x_3\rangle/\left(\begin{array}{ccc}
x_1x_2+x_2x_1\\
x_2x_3+x_3x_2\\
x_3x_1+x_1x_3\\
                                 \end {array}\right), |x_1|=|x_2|=|x_3|=1,$$
and $\partial_A$ is determined by
\begin{align*}
\left(
                         \begin{array}{c}
                           \partial_{\mathcal{A}}(x_1)\\
                           \partial_{\mathcal{A}}(x_2)\\
                           \partial_{\mathcal{A}}(x_3)
                         \end{array}
                       \right)=M\left(
                         \begin{array}{c}
                           x_1^2\\
                           x_2^2\\
                           x_3^2
                         \end{array}
                       \right).
\end{align*}
Then by Proposition \ref{nonasgoren},  $H(\mathcal{A})$ is not left (right) Gorenstein when $M$ is one of the following three matrixes:
                                $$ \left(\begin{array}{ccc}
                                   1 & 1 & 0 \\
                                   1 & 1 & 0 \\
                                   1 & 1 & 0\\
                                 \end{array}
                               \right), \left(
                           \begin{array}{ccc}
                             0 & 1 & 1 \\
                             0 & 1 & 1 \\
                             0 & 1 & 1 \\
                           \end{array}
                         \right),\left(
                           \begin{array}{ccc}
                             1 & 1 & 1 \\
                             1 & 1 & 1 \\
                             2 & 2 & 2 \\
                           \end{array}
                         \right).$$
\end{ex}

\section*{Acknowledgments}
The first author is supported by the National Natural Science Foundation of
China (No. 11871326) and the Scientific Research Program of the Higher Education Institution of XinJiang (No. XJEDU2017M032).


\end{document}